\theoremstyle{plain}
\newtheorem{theorem}{Theorem}[section]
\newtheorem{proposition}[theorem]{Proposition}
\newtheorem{corollary}[theorem]{Corollary}
\newtheorem{lemma}[theorem]{Lemma}
\theoremstyle{definition}
\newtheorem{definition}[theorem]{Definition}
\theoremstyle{remark}
\newtheorem{remark}[theorem]{Remark}
\newcommand{\note}[2][\null]{%
  \marginpar{\renewcommand{\baselinestretch}{1}\vspace{-1em}\hrule\vspace{3pt}%
  \scriptsize\raggedright\textsf{#2\ifx#1\null\else\\\hfill--- 
  {\em #1}\fi}\vspace{1.5em}}%
}
\numberwithin{equation}{section}
\begin{document}

\title[Mean Value Operators On Symmetric Spaces]{Surjectivity of Mean Value Operators on Noncompact Symmetric Spaces}

\author{Jens Christensen}
\address{Department of Mathematics,
Colgate University,
Hamilton, NY 13346}
\email{jchristensen@colgate.edu}
\author{Fulton Gonzalez}
\address{Department of Mathematics,
Tufts University,
Medford, MA 02155}
\email{fulton.gonzalez@tufts.edu}
\author{Tomoyuki Kakehi}
\address{Department of Mathematics,
Faculty of Science,
Okayama  University,
700-8530 1-1-1 Tsushima-naka , Kita-ku , Okayama-shi, Japan}
\email{kakehi@math.okayama-u.ac.jp}


\subjclass[2000]{Primary: 43A77, Secondary: 43A90}
\date{\today}
\keywords{Mean Value Operator,  Symmetric Space}

\begin{abstract}
Let $X=G/K$ be a symmetric space of the non-compact type.
We prove
that 
the mean value operator
over translated $K$-orbits of a fixed point is surjective on the space of smooth functions on $X$
if $X$ is either complex or of rank one. For higher rank spaces it is shown that the same statement is
true for points in an appropriate Weyl subchamber.
\end{abstract}

\maketitle

\def\rar{\rightarrow} 
\def\fk{\mathfrak}
\def\e{\mathfrak e}
\def\g{\mathfrak g} 
\def\k{\mathfrak k} 
\def\a{\mathfrak a} 
\def\m{\mathfrak m} 
\def\n{\mathfrak n}
\def\p{\mathfrak p} 
\def\u{\mathfrak u} 
\def\t{\mathfrak t} 
\def\h{\mathfrak h} 
\def\z{\mathfrak z}
\def\d{\mathfrak d}
\def\s{\mathfrak s}
\def\Exp{\text{Exp}}
\def\dbar{\overline\delta}
\def\Ad{\text{Ad}}
\def\ad{\text{ad}}
\def\rr{\mathbb R}
\def\rn{\mathbb R^n}
\def\cn{\mathbb C^n}
\def\nm{\nonumber}
\def\cc{\mathbb C}
\def\zz{\mathbb Z}
\def\be{\begin{equation}}
\def\ee{\end{equation}}

\def\Bbb{\mathbb}
\def\Cal{\mathcal}

\section{Introduction}
Let $X=G/K$ be a symmetric space of the non-compact type.
We will investigate the question of surjectivity of convolution
operators on the space of smooth functions on $X$.
Let $\mathcal E(X)$ denote the space of smooth functions
on $X$ equipped with the topology of uniform convergence
of all derivatives on compact sets.
We will show, in particular,
that for any $y$ in $X$ the mean value operator
\begin{equation*}
M^yf(x)=\int_K f(gk\cdot y)\,dk\qquad(x=g\cdot o\in X),
\end{equation*}
is a surjective linear operator on $\mathcal E(X)$
if $X$ is either complex or of rank one.
The mean value operator above can be realized as a convolution
operator with a $K$-invariant distribution of compact support.
This allows us to transfer the problem to Euclidean
space via the Abel transform, and in Euclidean space
 we can apply conditions on the convolution kernel which 
were previously obtained by Ehrenpreis (\cite{Ehrenpreis1960}) and H{\"o}rmander (\cite{Ho2}). Furthermore, our approach allows us to conclude that
$G$-invariant differential operators are surjective on smooth functions on symmetric spaces, which is  one of the main results in \cite{He1}. 
(See also \cite{Ehrenpreis1954} or \cite{Malgrange1954}
for the corresponding result on Euclidean space.)

Mean value operators can also be thought of as Radon transforms related to double fibrations. (See \cite{GASS}, Ch. II, \S3). 
Some of the principal problems are to determine the kernels, ranges, and the mapping properties of these
transforms and their duals on spaces of  functions and distributions. For example, the dual classical Radon transform was 
shown to be surjective on the space of smooth functions 
by Hertle in 1984 (\cite{Hertle1984}).  A similar result was proved by Helgason for the dual horocycle Radon transform on symmetric 
spaces. (See \cite{Helgason1982} or \cite{GASS}, Ch. IV, Corollary 2.5.) 

Mean value operators are essentially self-dual integral transforms, so questions pertaining to these transforms and their duals coincide.
The surjectivity of spherical mean value operators on
Euclidean space as well as the hyperbolic space $\mathbb H^3$
was recently proved in the thesis of K. Lim \cite{Lim}.
The idea of using Ehrenpreis and H{\"o}rmander estimates
in our work originates from the thesis by K. Lim.

\section{The Ehrenpreis and H{\"o}rmander  Criteria}
Throughout this paper we will use the following standard spaces
on a smooth manifold $\mathcal M$. 
The space
$\mathcal E(\mathcal M)$ denotes the space of smooth
functions on $\mathcal M$ equipped with the topology of uniform
convergence of all derivatives on every compact subset of $\mathcal M$. 
The space 
$\mathcal D(\mathcal M)$ denotes the subspace of functions in
$\mathcal E(\mathcal M)$ which are compactly supported. 
The dual $\mathcal D'(\mathcal M)$ is called the space of distributions
on $\mathcal M$, and the dual $\mathcal E'(\mathcal M)$ is
the space of compactly supported distributions. 
The dual spaces $\mathcal D'(\mathcal M)$ and $\mathcal E'(\mathcal M)$
are (if not stated otherwise) equipped with the weak* topology.

In this section, we consider the convolution operator on $\mathbb R^n$ with a given distribution of compact support.  In particular, for a distribution $\mu\in\mathcal E'(\mathbb R^n)$, consider the convolution operator $c_\mu\colon\mathcal E(\mathbb R^n)\to\mathcal E(\mathbb R^n)$ given by
\begin{equation}\label{E:convolution1}
c_\mu(f)=f*\mu.
\end{equation}

Since $\mu$ has compact support, the Fourier-Laplace transform $\mu^*$ is a holomorphic function on $\mathbb C^n$.  A complete description of the Fourier transforms of compactly supported distributions is of course provided by the Paley-Wiener Theorem.  (See, for instance, Theorem 7.3.1 in \cite{Ho1}, where it is formulated in terms of support functions.)

The  operator $c_\mu$ is, in general,  not injective on $\mathcal E(\rn)$.  In fact suppose that $\zeta\in \mathbb C^n$ is a zero of the holomorphic function $\mu^*$. Then if $f(x)=e^{i\langle\zeta,x\rangle}$, we would have $c_\mu(f)=0$. However, if $\mu\neq 0$, then $c_\mu$ is injective as an operator on $\mathcal E'(\rn)$, as is easily seen by taking Fourier transforms.

On the other hand, $c_\mu\colon\mathcal E(\rn)\to\mathcal E(\rn)$ is often surjective, as for instance when $c_\mu$ happens to be a constant coefficient differential operator 
\cite[Theorem 10]{Ehrenpreis1954}.   In the theorem below, we collect conditions provided in H{ö}rmander's text  (\cite{Ho2}, Theorem 16.3.10, Definition 16.3.12, and Theorem 16.5.7) which are equivalent to the surjectivity of $c_\mu$.  
In order to formulate the theorem we need a definition:

\begin{definition}
We will say that a function $u:\cc^n\to \cc$ is \emph{slowly decreasing}
if there is a constant $A>0$ such that 
$$
\sup\{|u(\zeta)|\,\colon\,\zeta\in\mathbb C^n,\;\|\zeta-\xi\|\leq A\log(2+\|\xi\|)\}\geq (A+\|\xi\|)^{-A},
$$
for all $\xi\in\mathbb R^n$.
\end{definition}

\noindent This is easily shown to be equivalent to the following
seemingly more flexible condition:
the function $u:\cn\to\cc$ is slowly decreasing
if and only if there are positive constants $A,\,B,\,C$, and $D$ such that
\begin{equation}\label{E:invertibility2}
\sup\{|u(\zeta)|\,\colon\,\zeta\in\mathbb C^n,\;\|\zeta-\xi\|\leq A\log(2+\|\xi\|)\}\geq B(C+\|\xi\|)^{-D}
\end{equation}
\emph{for all $\xi\in\mathbb R^n$}.
\medskip

In what follows, let $\mu^\vee$ be the distribution $\mu^\vee(f)=\mu(f^\vee)$, for $f\in\mathcal E(X)$, where $f^\vee(x)=f(-x)$.
\begin{theorem}\label{T:invertible-dist}(Ehrenpreis \cite{Ehrenpreis1960}, H{ö}rmander \cite{Ho2})  Let $\mu\in\mathcal E'(\rn)$.  Then the following conditions on $\mu$ are equivalent.
\renewcommand{\theenumi}{(\roman{enumi})}
\renewcommand{\labelenumi}{\theenumi}
\begin{enumerate}
\item The convolution operator $c_\mu\colon\mathcal E(\rn)\to\mathcal E(\rn)$ is surjective.
\item\label{slow-decrease} The Fourier transform $\mu^*$ is slowly decreasing.
\item\label{invert-equiv} For any $T\in\mathcal E'(\mathbb R^n)$ such that $T^*/\mu^*$ is an entire function on $\mathbb C^n$, there is an $S\in\mathcal E'(\mathbb R^n)$ such that $S^*=T^*/\mu^*$.
\item The convolution operator $c_{\mu^\vee}\colon\mathcal E'(\rn)\to\mathcal E'(\rn)$ has weak$^*$ closed range.
\end{enumerate}
\end{theorem}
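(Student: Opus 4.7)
The plan is to establish the four-way equivalence by combining a classical functional-analytic duality argument with the Paley-Wiener theorem and the precise estimates supplied by the slow decrease condition. Since the theorem is already attributed to Ehrenpreis and Hörmander, the proposal is really to organize the cycle of implications around the three genuinely distinct ideas: (a) duality between surjectivity and weak$^*$-closed range, (b) the division problem for entire functions of exponential type, and (c) the Paley-Wiener characterization of $\mathcal{E}'(\mathbb{R}^n)$.

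First I would dispense with (i)$\Leftrightarrow$(iv). Both $\mathcal{E}(\mathbb{R}^n)$ and $\mathcal{D}(\mathbb{R}^n)$ are nice enough spaces (Fréchet in the first case, LF in the second) that the classical closed range theorem due to Dieudonné and Schwartz applies: a continuous linear endomorphism of $\mathcal{E}(\mathbb{R}^n)$ is surjective if and only if its transpose is injective and has weak$^*$-closed range in $\mathcal{E}'(\mathbb{R}^n)$. A quick Fourier transform argument, noted in the excerpt, shows that $c_{\mu^\vee}$ is automatically injective on $\mathcal{E}'(\mathbb{R}^n)$ whenever $\mu\neq 0$, so what is left of the transpose condition is exactly (iv); the case $\mu=0$ is trivial.

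Next I would handle (ii)$\Leftrightarrow$(iii). The implication (ii)$\Rightarrow$(iii) is where slow decrease is really used. Given $T\in\mathcal{E}'(\mathbb{R}^n)$ with $q:=T^*/\mu^*$ entire, one uses Paley-Wiener estimates on $T^*$ together with the minimum-modulus lower bound for $\mu^*$ on polynomial balls of logarithmic radius (which is precisely what \eqref{E:invertibility2} provides) to control $|q(\zeta)|$ at lattice points via a Cauchy-type or maximum-modulus argument. Interpolating these bounds yields the global exponential-type estimate needed for $q$ to be the Fourier-Laplace transform of some $S\in\mathcal{E}'(\mathbb{R}^n)$. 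The converse (iii)$\Rightarrow$(ii) is obtained by testing the division property against a carefully chosen family of compactly supported distributions (for instance distributions whose Fourier transforms concentrate near a given $\xi$); failure of slow decrease at some $\xi$ would allow construction of a $T$ with $T^*/\mu^*$ entire but growing too fast to be Paley-Wiener.

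Finally, to close the cycle one needs (iii)$\Rightarrow$(i) (the reverse (i)$\Rightarrow$(iii) is the easier direction, since if $c_\mu$ is surjective one can take preimages and Fourier transform). The implication (iii)$\Rightarrow$(i) is the hard part of the whole theorem. The main obstacle is that a general $f\in\mathcal{E}(\mathbb{R}^n)$ is not compactly supported, so the division of entire functions must be upgraded into a genuine existence theorem on smooth functions; this is typically done either by Ehrenpreis's fundamental principle or by solving an associated $\bar\partial$-problem with weighted $L^2$ estimates, combined with a cutoff/gluing procedure that uses the slow decrease condition in an essential way to keep the local solutions compatible. Once this machinery is available, one recovers $g$ with $g*\mu=f$ and the cycle is complete.
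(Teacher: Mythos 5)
The paper does not actually prove this theorem: it is quoted from H\"ormander (\cite{Ho2}, Theorem 16.3.10, Definition 16.3.12, Theorem 16.5.7) and Ehrenpreis, and the only argument the text supplies is the remark that the equivalence of (i) and (iv) is an instance of the Fr\'echet duality theorem (Theorem \ref{T:frechet}) combined with the injectivity of $c_{\mu^\vee}$ on $\mathcal E'(\rn)$ for $\mu\neq 0$. Your first paragraph reproduces exactly that observation, and your sketch of (ii) $\Rightarrow$ (iii) is consistent with the ingredient the paper extracts from H\"ormander's proof: the uniform division estimate of Proposition \ref{T:uniform-exp-type}, which together with the Paley--Wiener description of $\mathcal E'(\rn)$ turns ``$T^*/\mu^*$ entire'' plus the lower bound \eqref{E:invertibility2} into the existence of $S$.

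Two steps of your cycle, however, are genuinely problematic. First, (i) $\Rightarrow$ (iii) is not ``the easier direction, take preimages and Fourier transform'': surjectivity of $c_\mu$ on $\mathcal E(\rn)$ produces smooth, generally non-compactly-supported preimages and gives no direct construction of a compactly supported $S$ with $S^*=T^*/\mu^*$; the cited route from (i) back to (ii)/(iii) is quantitative (open-mapping estimates tested on exponentials, or an explicit counterexample when slow decrease fails), and likewise your (iii) $\Rightarrow$ (ii) construction must produce a $T$ whose transform is actually divisible by $\mu^*$ while the quotient grows too fast, e.g.\ $T^*=\mu^*q$ with $q$ entire of exponential type and $\mu^*q$ still of Paley--Wiener growth --- a point your ``concentration near $\xi$'' phrasing does not address. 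Second, and more seriously, for (iii) $\Rightarrow$ (i) you appeal to Ehrenpreis's fundamental principle or a weighted $\bar\partial$ problem with a cutoff-and-gluing step but give no argument; this is both unproved and unnecessary. The economical closing of the cycle --- the one in \cite{Ho2} and the one the paper itself transplants to $G/K$ in the proof of Theorem \ref{T:surjectivity} --- is to use (ii) and Proposition \ref{T:uniform-exp-type} with Paley--Wiener to identify $c_{\mu^\vee}(\mathcal E'(\rn))$ with the divisibility set \eqref{E:conv-range}, observe that this set is weak$^*$ closed, i.e.\ (iv), and then invoke Theorem \ref{T:frechet} to get (i). The semi-local-to-global difficulty you locate in (iii) $\Rightarrow$ (i) is precisely what the duality theorem is there to bypass, so as written that implication is a gap in your proposal.
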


In the aforementioned reference, there are additional conditions on $\mu$ in case the domain and range of $c_\mu$ are required to have support in certain subsets of $\rn$, but we will not need them here since they are trivially satisfied when these subsets equal $\rn$.

Following H{ö}rmander, we will say that a distribution $\mu\in\mathcal E'(\mathbb R^n)$ is \emph{invertible}
provided that it  satisfies any of the equivalent conditions in Theorem \ref{T:invertible-dist}.

Since $c_{\mu^\vee}:\mathcal E'(\mathbb{R}^n)\to\mathcal E'(\mathbb{R}^n)$ is injective if $\mu\neq 0$, the equivalence of Conditions (i) and (iv) above is a special case of the following general fact about continuous linear mappings on Frech\'et spaces.  (In \cite{Ho2}, Theorem 16.5.7, it is used to prove that (iv) implies (i).)
\begin{theorem}\label{T:frechet}
Let $E$ and $F$ be Frech\'et spaces.  A continuous linear map $\Phi\colon E\to F$ is surjective if and only if its adjoint $\Phi^*\colon F'\to E'$ is injective and has a weak$^*$ closed range in $E'$.
\end{theorem}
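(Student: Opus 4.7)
The plan is to prove the two directions separately, relying on the open mapping theorem for Fr\'echet spaces, the Hahn-Banach theorem, and the closed range theorem for Fr\'echet spaces.

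For the forward direction, I would assume $\Phi$ is surjective. Injectivity of $\Phi^*$ is immediate: if $y'\in F'$ satisfies $\Phi^* y' = 0$, then $y'$ annihilates $\Phi(E)=F$, so $y'=0$. For weak* closedness of $\Phi^*(F')$, the strategy is to identify it with the annihilator $(\ker\Phi)^\perp\subseteq E'$; since every annihilator is weak* closed, being an intersection of kernels of weak* continuous point evaluations, this settles the claim. The inclusion $\Phi^*(F')\subseteq(\ker\Phi)^\perp$ is trivial, while the reverse uses surjectivity: by the open mapping theorem, $\Phi$ descends to a topological isomorphism $\overline\Phi\colon E/\ker\Phi\to F$, so any $x'\in(\ker\Phi)^\perp$ factors through $\overline\Phi$ as a continuous functional $y'\in F'$ with $\Phi^* y'=x'$.

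For the converse direction, I would assume $\Phi^*$ is injective and $\Phi^*(F')$ is weak* closed, and show $\Phi(E)=F$. By Hahn-Banach, injectivity of $\Phi^*$ is equivalent to density of $\Phi(E)$ in $F$, so it suffices to upgrade density to equality, that is, to show $\Phi(E)$ is closed. This is the content of the closed range theorem for Fr\'echet spaces (see H\"ormander \cite{Ho2}, \S16.5, or Dieudonn\'e-Schwartz), which asserts that $\Phi(E)$ is closed in $F$ if and only if $\Phi^*(F')$ is weak* closed in $E'$. Combining closedness with density yields $\Phi(E)=F$.

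The main obstacle is the converse direction, and in particular the closed range theorem itself. Unlike the Banach-space case, this is not a routine duality argument: its proof requires a Krein-\v Smulian type theorem for the dual of a Fr\'echet space together with a careful analysis of polars in the weak* topology on $E'$. In the context of the present paper, however, this theorem can be invoked as a standard fact and the proof of Theorem~\ref{T:frechet} reduces to the bookkeeping above.
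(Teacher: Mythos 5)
Your proof is correct, and it is essentially the route behind the paper's treatment: the paper offers no proof of its own, only a reference to Schaefer \cite{Sch} (Theorem 7.7, Ch.~IV), and that cited result is precisely the closed range/duality theorem for Fr\'echet spaces that you invoke for the converse direction. Your forward direction (injectivity of $\Phi^*$, and the identification $\Phi^*(F')=(\ker\Phi)^\perp$ via the open mapping theorem, with annihilators being weak$^*$ closed) together with the Hahn--Banach density argument is correct bookkeeping, and the one genuinely deep step---that $\Phi(E)$ is closed in $F$ if and only if $\Phi^*(F')$ is weak$^*$ closed in $E'$---is invoked from the literature rather than proved, exactly as in the paper.
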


\noindent For a proof of Theorem \ref{T:frechet}, see Theorem 7.7, Ch. IV in Schaefer's book \cite{Sch}.  See also Theorem 3.7, Ch. I of \cite{GASS} for a generalization.  We note that it is a straightforward consequence of the Hahn-Banach Theorem  that a subspace of $E'$ is  closed in the weak$^*$ topology of $E'$  if and only if it is closed in the strong topology of $E'$.


Note that while the other conditions in Theorem \ref{T:invertible-dist} are mapping conditions, Condition \ref{slow-decrease} is a condition that is in theory testable by computation.  According to \cite{Mal}, if the ratio $T^*/\mu^*$  is an entire function on $\mathbb C^n$, then it is of exponential type.  In order for this ratio to be the Fourier transform of a compactly supported distribution, it must  be of slow (i.e., polynomial) 
growth in $\mathbb R^n$, and the slow decrease condition \ref{slow-decrease} on $\mu^*$ is equivalent to this.

Finally, we note that if $\mu$ is invertible, then so is $\mu^\vee$, and the (closed) range $c_{\mu}(\mathcal E'(\rn))$ is given precisely by the set
\begin{equation}\label{E:conv-range}
\{S\in\mathcal E'(\rn)\,:\,S^*(\zeta)/\mu^*(\zeta)\text{ is an entire function on }\mathbb C^n\}.
\end{equation}

We will need the following refinement of Condition \ref{invert-equiv}, 
which can be found in the proof of Theorem 16.3.10 in \cite{Ho2}.


 \begin{proposition}\label{T:uniform-exp-type}  Suppose that $u:\cn\to\cc$ is slowly decreasing and holomorphic.
 For every triple $(C,R,N)\in\rr^+\times\rr^+\times\zz^+$, there is a triple $(C',R',N')\in\rr^+\times\rr^+\times\zz^+$ such that whenever $v:\cn\to\cc$ is holomorphic and satisfies
 $$
 |v(\zeta)|\leq C\,(1+\|\zeta\|)^N\,e^{R\,\|\text{Im}\,\zeta\|}
 $$
 for all $\zeta\in\cn$ and $v/u$ is holomorphic on $\cn$, then
 $$
 |v(\zeta)/u(\zeta)|\leq  C'\,(1+\|\zeta\|)^{N'}\,e^{R'\,\|\text{Im}\,\zeta\|}
 $$
 for all $\zeta\in\cn$.
 \end{proposition}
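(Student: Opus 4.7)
The proposition asserts that a Paley--Wiener-type bound for $v$, combined with the slow-decrease of $u$, forces a Paley--Wiener-type bound for $v/u$ whose parameters depend only on $u$ and on $(C,R,N)$. I would follow the Ehrenpreis--H\"ormander strategy: first show that $w := v/u$ is polynomially bounded on $\mathbb{R}^n$, and then use a Phragm\'en--Lindel\"of argument in the imaginary directions to recover the exponential-type estimate throughout $\mathbb{C}^n$.

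\emph{Step 1 (Polynomial bound at nearby points).} Fix $\xi \in \mathbb{R}^n$. By slow decrease there is $\zeta_\xi \in \mathbb{C}^n$ with $\|\zeta_\xi - \xi\| \leq A\log(2+\|\xi\|)$ and $|u(\zeta_\xi)| \geq (A+\|\xi\|)^{-A}$. Since $\|\mathrm{Im}\,\zeta_\xi\| \leq A\log(2+\|\xi\|)$, the factor $e^{R\|\mathrm{Im}\,\zeta_\xi\|}$ in the hypothesized bound on $v$ is dominated by $(2+\|\xi\|)^{RA}$. Combining these yields $|w(\zeta_\xi)| \leq C_0 (1+\|\xi\|)^{N_0}$ with $C_0,N_0$ depending only on $A,C,R,N$.

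\emph{Step 2 (Transfer to $\xi$ by minimum-modulus).} To replace $\zeta_\xi$ by $\xi$, I would apply a Cartan/minimum-modulus lemma (in the spirit of H\"ormander, Lemma 7.3.15 or the argument in the proof of Theorem 16.3.10) to $u$ restricted to a polydisk of radius $r \sim \log(2+\|\xi\|)$ centered at $\xi$. On this polydisk, the growth bound on $u$ (polynomial times exponential, where the exponential is polynomially controlled because $r$ is logarithmic) together with the pointwise lower bound $|u(\zeta_\xi)|\geq (A+\|\xi\|)^{-A}$ furnishes, via the minimum-modulus argument, a polynomial lower bound on $|u|$ throughout the polydisk outside an exceptional set of disks whose total radius is small. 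Since $\xi$ can be joined to $\zeta_\xi$ within the polydisk while avoiding these exceptional disks (after a mild adjustment of $\xi$ by Cauchy-estimate arguments on the entire function $w$), one obtains $|w(\xi)| \leq C_1(1+\|\xi\|)^{N_1}$ for all $\xi \in \mathbb{R}^n$.

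\emph{Step 3 (Phragm\'en--Lindel\"of in the imaginary directions).} With $w$ polynomially bounded on $\mathbb{R}^n$ and globally holomorphic, I apply a Phragm\'en--Lindel\"of principle along the complex lines $\lambda \mapsto w(\xi + i\lambda\eta)$, $\eta \in S^{n-1}$. The real-axis bound from Step 2 and the exponential-type upper bound on $|w|$ (inherited from the ones on $v$ via the Step-2 lower bound on $|u|$ at slightly perturbed points) combine to give $|w(\zeta)| \leq C'(1+\|\zeta\|)^{N'}e^{R'\|\mathrm{Im}\,\zeta\|}$ with $C',R',N'$ depending only on the initial data.

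\emph{Main obstacle.} The delicate step is Step 2: the transfer of a single-point lower bound on $|u|$ to a uniform lower bound on a logarithmic-size polydisk requires a careful minimum-modulus argument that avoids the zero set of $u$, and the logarithmic scale must be matched to the slow-decrease parameters so that polynomial growth (not exponential) is preserved. Once this uniform lower bound on $|u|$ is established, Steps 1 and 3 reduce to standard Paley--Wiener and Phragm\'en--Lindel\"of computations.
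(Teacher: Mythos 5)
The paper gives no argument of its own here: it points to the proof of Theorem 16.3.10 in H\"{o}rmander \cite{Ho2}, and your Steps 1--2 are exactly the engine of that proof — use slow decrease to produce a point $\zeta_\xi$ at logarithmic distance from $\xi$ where $|u|$ has a polynomial lower bound, then use Cartan's minimum-modulus theorem on a disc of logarithmic radius (note the lemma is one-variable, so your ``polydisk'' should be a disc in the complex line through the two points), and handle the exceptional discs by the maximum principle applied to the entire function $w=v/u$ on a circle that avoids them, rather than by ``joining'' points. One remark: like H\"{o}rmander, you invoke an upper Paley--Wiener bound on $u$, which is not among the stated hypotheses but is needed for the minimum-modulus estimate and holds in the intended application ($u=\widetilde\mu$ with $\mu\in\mathcal E'(X)$); that is a defect of the statement, not of your argument.

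The genuine gap is Step 3. Phragm\'en--Lindel\"of on a half-plane converts a boundary bound $|w(\xi)|\le C_1(1+\|\xi\|)^{N_1}$ on $\mathbb{R}^n$ into a bound $C'(1+\|\zeta\|)^{N'}e^{R'\|\mathrm{Im}\,\zeta\|}$ only if one already controls the exponential type of $w$ along the imaginary directions; a priori one knows only qualitatively (the Malgrange/Lindel\"of-type result cited in the paper) that $w$ is of exponential type, with a type that may depend on $v$, so Phragm\'en--Lindel\"of by itself yields an $R'$ depending on $v$, contrary to the uniformity the proposition asserts. Your parenthetical repair — obtaining the type of $w$ ``via the Step-2 lower bound on $|u|$ at slightly perturbed points'' — is in fact the whole proof: run the Step-2 minimum-modulus/division estimate at an arbitrary $\zeta\in\mathbb{C}^n$, centered at the good point near $\mathrm{Re}\,\zeta$, with disc radius comparable to $\|\mathrm{Im}\,\zeta\|+A\log(2+\|\mathrm{Re}\,\zeta\|)$; the Paley--Wiener upper bounds on $v$ and $u$ over such a disc then contribute precisely the factor $(1+\|\zeta\|)^{N'}e^{R'\|\mathrm{Im}\,\zeta\|}$, and no Phragm\'en--Lindel\"of step is needed. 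As written, Step 3 is either circular or collapses into that direct argument, which is how the proof cited by the paper actually proceeds.
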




The slow decrease condition \ref{slow-decrease} implies, in particular, that constant coefficient differential operators on $\mathbb R^n$ are invertible,
since their Fourier transforms are polynomials. Below we shall see  that it also implies that  finite sums of delta functions are invertible.
This result is already obtained in \cite{Ehrenpreis1955} , but we include a proof here
in order to demonstrate the use of Theorem~\ref{T:invertible-dist}\ref{slow-decrease} and \eqref{E:invertibility2}.

\begin{proposition}\label{T:deltafcn}
Fix distinct distinct $x_1,\ldots,x_N$ in $\mathbb R^n$.  Then the distribution $\mu=\sum_{j=1}^N \delta_{x_j}$is invertible.
\end{proposition}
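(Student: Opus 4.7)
The plan is to verify condition (ii) of Theorem~\ref{T:invertible-dist}, that $\mu^{*}$ is slowly decreasing, in the flexible form \eqref{E:invertibility2}. Since $\mu^{*}(\zeta)=\sum_{j=1}^{N}e^{-i\langle x_{j},\zeta\rangle}$ (up to a sign convention), for $\zeta=\xi+isv$ with $\xi,v\in\rr^{n}$ and $s\in\rr$ we have
\begin{equation*}
\mu^{*}(\xi+isv)=\sum_{j=1}^{N}b_{j}(\xi)\,e^{a_{j}s},\qquad b_{j}(\xi):=e^{-i\langle x_{j},\xi\rangle},\quad a_{j}:=\langle x_{j},v\rangle,
\end{equation*}
with $|b_{j}(\xi)|=1$ for every $\xi\in\rr^{n}$. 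The key observation is that this is a real exponential sum in $s$ whose \emph{moduli of coefficients} are independent of $\xi$, so a lower bound at a single well-chosen $s$ will hold uniformly in $\xi$.

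First I would pick a direction $v\in\rr^{n}$ such that the real numbers $a_{1},\dots,a_{N}$ are all distinct. Since the $x_{j}$'s are distinct, each hyperplane $\{v:\langle x_{j}-x_{k},v\rangle=0\}$ is proper in $\rr^{n}$, so a generic $v$ works. After relabeling assume $a_{1}<a_{2}<\cdots<a_{N}$, and write $\delta:=a_{N}-a_{N-1}>0$. Then the triangle inequality gives
\begin{equation*}
|\mu^{*}(\xi+isv)|\geq e^{a_{N}s}-\sum_{j<N}e^{a_{j}s}\geq e^{a_{N}s}-(N-1)\,e^{a_{N-1}s}
=e^{a_{N-1}s}\bigl(e^{\delta s}-(N-1)\bigr).
\end{equation*}
Choose $s_{0}:=\delta^{-1}\log\bigl(2(N-1)\bigr)$, a constant depending only on the $x_{j}$'s and $v$. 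Then $|\mu^{*}(\xi+is_{0}v)|\geq\tfrac12 e^{a_{N}s_{0}}=:c>0$ for every $\xi\in\rr^{n}$.

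To conclude, set $\zeta(\xi):=\xi+is_{0}v$. Then $\|\zeta(\xi)-\xi\|=s_{0}\|v\|$ is a fixed constant, so certainly $\|\zeta(\xi)-\xi\|\leq A\log(2+\|\xi\|)$ for any $A\geq s_{0}\|v\|$, while $|\mu^{*}(\zeta(\xi))|\geq c$, which trivially dominates $B(C+\|\xi\|)^{-D}$ once $B\leq c$. This is exactly the equivalent slow-decrease condition \eqref{E:invertibility2}, so $\mu$ is invertible by Theorem~\ref{T:invertible-dist}. There is essentially no obstacle here beyond the genericity argument for choosing $v$; the whole point is that shifting $\xi$ imaginarily in a direction that separates the $\langle x_{j},v\rangle$'s makes one exponential term dominate while the phases $b_{j}(\xi)$ have unit modulus, so no cancellation can destroy the bound as $\xi$ varies.
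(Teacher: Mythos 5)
Your proof is correct and rests on the same core mechanism as the paper's: verify the slow-decrease condition \eqref{E:invertibility2} for $\mu^*(\zeta)=\sum_j e^{-i\langle x_j,\zeta\rangle}$ by shifting $\xi$ in the imaginary direction so that a single exponential dominates, the unit modulus of the phases $e^{-i\langle x_j,\xi\rangle}$ guaranteeing the bound is uniform in $\xi$. The implementation differs slightly: the paper shifts along the specific direction $x_1$ of maximal norm (so that $\langle x_j,x_1\rangle<\|x_1\|^2$ for $j>1$) and lets the shift length grow like $A\log(2+\|\xi\|)$, which yields a lower bound growing polynomially in $\|\xi\|$; you shift by a \emph{fixed} amount $s_0\|v\|$ along a generic direction $v$ separating the values $\langle x_j,v\rangle$, which gives only a constant lower bound but is all that \eqref{E:invertibility2} requires, and it spares you the $\xi$-dependent tuning of $t$. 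Two trivial points to patch: the case $N=1$ should be disposed of separately (your $\delta$ and $s_0$ are undefined there, but $|\mu^*(\xi)|=1$ makes it immediate, as in the paper); and since $\log(2+\|\xi\|)$ can be as small as $\log 2<1$, the choice $A\geq s_0\|v\|$ does not quite give $s_0\|v\|\leq A\log(2+\|\xi\|)$ for small $\|\xi\|$ --- take instead $A\geq s_0\|v\|/\log 2$. With these adjustments the argument is complete.
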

\begin{proof}
Without loss of generality, we may assume that $N>1$ (otherwise $c_\mu$ is just a translation which is trivially surjective) and that $\|x_1\|\geq \|x_j\|$ for all $j$.  Then in particular, $x_1\neq 0$ and $\langle x_j,x_1\rangle<\|x_1\|^2$ for all $j>1$.  Choose any constant $A$ such that
\begin{equation}\label{E:A-est}
A > \frac{\|x_1\|\,\log N}{\log 2(\|x_1\|^2-\langle x_j,x_1\rangle)}\qquad (j=2,\ldots,N).
\end{equation}

For any $\zeta\in \mathbb C^n$, we have $\mu^*(\zeta)=\sum_{j=1}^N e^{-i\langle x_j,\zeta\rangle}$.  Thus if $\xi,\,\eta\in\mathbb R^n$ and $\zeta=\xi+i\eta$, we have
 \begin{align}
 |\mu^*(\zeta)|&=\left|\sum_{j=1}^N e^{\langle x_j,\eta\rangle}\,e^{-i\langle x_j,\xi\rangle}\right|\nonumber\\
 &=e^{\langle x_1,\eta\rangle}\,\left|e^{-i\langle x_1,\xi\rangle}+\sum_{j=2}^N e^{(\langle x_j,\eta\rangle-\langle x_1,\eta\rangle)}\,e^{-i\langle x_j,\xi\rangle}\right|\nonumber\\
 &\geq e^{\langle x_1,\eta\rangle}\,\left(1-\sum_{j=2}^N e^{(\langle x_j,\eta\rangle-\langle x_1,\eta\rangle)}\right)\label{E:fourierestimate1}
 \end{align}
Fixing $\xi$ for the moment, let $\eta=tx_1$, choosing $t$ so that 
 $$
 \frac{\|x_1\|\,\log N}{\|x_1\|^2-\langle x_j,x_1\rangle}<t\|x_1\|\leq A\,\log(2+\|\xi\|)\qquad(j=2,\ldots,N).
 $$
 This is possible, by our choice \eqref{E:A-est} of $A$.  Then $\|\zeta-\xi\|=\|\eta\|=t\|x_1\|\leq A\log(2+\|\xi\|)$, and 
moreover, $t(\langle x_j,x_1\rangle-\|x_1\|^2)<-\log N$ for $j\geq 2$.   Hence
by \eqref{E:fourierestimate1} we have
 \begin{align}
 |\mu^*(\zeta)|&\geq e^{t\|x_1\|^2}\,\left(1-\sum_{j=2}^N e^{t(\langle x_j,x_1\rangle-\|x_1\|^2)}\right)\nonumber\\
 &\geq e^{t\|x_1\|^2}\cdot\frac 1N\label{E:fourierestimate2}
 \end{align}
 If we now choose $t$ so that $t\|x_1\|=A\,\log(2+\|\xi\|)$, then we see that the Fourier estimate \eqref{E:fourierestimate2} becomes
 $$
 |\mu^*(\zeta)|\geq \frac 1N (2+\|\xi\|)^{A\|x_1\|},
 $$
 which will certainly imply the slow decrease condition \eqref{E:invertibility2}.
\end{proof}

\noindent\emph{Remark.} For distinct points $x_1,\ldots,x_N$ in $\mathbb R^n$ and nonzero complex scalars $c_1,\ldots,c_N$, the above proof can be easily modified to show that the distribution
$$
\mu=\sum_{j=1}^N c_j\delta_{x_j}
$$
is invertible.  In this case, we can again assume that $N>1$ and that $\|x_1\|$ is maximal, and we choose $A$ so that
$$
A>\frac{\|x_1\|(\log M+\log |c_j|-\log |c_1|)}{\log2(\|x_1\|^2-\langle x_j,x_1\rangle)}\qquad (j=2,\ldots,N),
$$
where $M$ is any constant such that $M>N$ and $M>2|c_j|$ for $j=1,\ldots,N$.  We will leave the details to the reader.

We finish this section by including the more general result from \cite[Theorem 5]{Ehrenpreis1955} which we will use later
\begin{theorem}\label{T:delaydiffinv}
Fix distinct points $x_1,\ldots,x_N$ in $\mathbb R^n$ and let $p_1,\ldots,p_N$ be polynomials in $\rn$.  
Then the distribution $\mu=\sum_{j=1}^N p_j(\partial_1,\dots,\partial_n)\delta_{x_j}$ is invertible.
\end{theorem}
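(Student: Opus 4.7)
The strategy is to verify condition \ref{slow-decrease} of Theorem \ref{T:invertible-dist}, i.e., that $\mu^*$ is slowly decreasing. Each $p_j(\partial_1,\ldots,\partial_n)\delta_{x_j}$ has Fourier-Laplace transform $P_j(\zeta)\,e^{-i\langle x_j,\zeta\rangle}$, where $P_j(\zeta):=p_j(i\zeta_1,\ldots,i\zeta_n)$, so
$$
\mu^*(\zeta)=\sum_{j=1}^N P_j(\zeta)\,e^{-i\langle x_j,\zeta\rangle}.
$$
Dropping any terms with $P_j\equiv 0$ and reindexing, I assume every $P_j\not\equiv 0$ and that $\|x_1\|\geq\|x_j\|$ for all $j$; as in Proposition \ref{T:deltafcn}, distinctness then forces $\|x_1\|^2-\langle x_j,x_1\rangle>0$ for $j\geq 2$.

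The proof template from Proposition \ref{T:deltafcn} applies with an additional perturbation to handle the polynomial factors. Given $\xi\in\rr^n$, I first translate by $i\eta$ with $\eta=tx_1$ and $t=A\log(2+\|\xi\|)/\|x_1\|$ for a constant $A$ to be chosen large. Along $\xi+i\eta$ the $x_1$-term dominates exponentially, because for $j\geq 2$
$$
\frac{|e^{-i\langle x_j,\xi+i\eta\rangle}|}{|e^{-i\langle x_1,\xi+i\eta\rangle}|}=e^{-t(\|x_1\|^2-\langle x_j,x_1\rangle)}=(2+\|\xi\|)^{-A(\|x_1\|^2-\langle x_j,x_1\rangle)/\|x_1\|},
$$
and by choosing $A$ large relative to $\max_j\deg P_j$, these ratios beat the polynomial growth of $|P_j(\xi+i\eta)|$ on any neighborhood of interest.

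The new step is a secondary perturbation $\eta'\in\cc^n$ with $\|\eta'\|\leq 1$ producing a uniform positive lower bound for $|P_1(\xi+i\eta+\eta')|$. This rests on the observation that for any polynomial $P\not\equiv 0$ of degree $d$ on $\cc^n$, the top-degree Taylor coefficients of $P$ around an arbitrary basepoint $\zeta_0$ coincide with the top-degree coefficients of $P$ itself; applying the polydisc Cauchy estimate on $\{|w_j|\leq 1/\sqrt n\}$, which is inscribed in the unit ball, gives
$$
\sup_{\|\eta'\|\leq 1}|P(\zeta_0+\eta')|\geq c>0
$$
uniformly in $\zeta_0$, where $c$ depends only on $P$. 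Applied to $P_1$, and letting $\zeta:=\xi+i\eta+\eta'$ for a maximizing $\eta'$, I have $\|\zeta-\xi\|\leq A\log(2+\|\xi\|)+1$ and, via the triangle inequality,
$$
|\mu^*(\zeta)|\geq e^{t\|x_1\|^2}\Bigl(e^{-\|x_1\|}|P_1(\zeta)|-e^{\|x_1\|}\sum_{j\geq 2}|P_j(\zeta)|\,e^{-t(\|x_1\|^2-\langle x_j,x_1\rangle)}\Bigr).
$$
The choice of $A$ forces the error sum to be smaller than $c/2$ for $\|\xi\|$ large, yielding the polynomial \emph{growth} lower bound $|\mu^*(\zeta)|\geq C(2+\|\xi\|)^{A\|x_1\|}$, which trivially implies \eqref{E:invertibility2}. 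For bounded $\|\xi\|$, the sup of $|\mu^*|$ over $\|\zeta-\xi\|\leq A\log 2$ is continuous and strictly positive in $\xi$ (since $\mu^*$ is entire and not identically zero), hence is bounded below by a positive constant on the compact parameter range.

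The principal obstacle is the secondary perturbation: obtaining a uniform-in-basepoint lower bound on $|P_1|$ in a fixed-radius complex ball. The Cauchy-estimate observation above is the only essentially new ingredient beyond Proposition \ref{T:deltafcn}; once it is in place, the exponential-dominance argument from that proposition extends verbatim.
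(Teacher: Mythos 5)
Your argument is correct, but it takes a genuinely different route from the paper: the paper does not prove Theorem \ref{T:delaydiffinv} at all---it quotes it from Ehrenpreis \cite[Theorem 5]{Ehrenpreis1955}---and only proves the special case of pure delta sums (Proposition \ref{T:deltafcn}), with a remark on nonzero constant coefficients. What you do is upgrade that proof to the general case by adding one new ingredient: for a nonzero polynomial $P$ of degree $d$, the supremum of $|P|$ over every complex ball of radius $1$ is bounded below by a constant depending only on $P$, since the degree-$d$ Taylor coefficients of $P$ at any basepoint coincide with those of $P$ itself and the Cauchy estimates on the inscribed polydisc of polyradius $1/\sqrt n$ give $\sup_{\|\eta'\|\le 1}|P(\zeta_0+\eta')|\ge |a_\alpha|\,n^{-d/2}$; this is a correct and standard estimate. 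Combined with the exponential dominance of the $x_1$-term along $\eta=tx_1$, $t\|x_1\|=A\log(2+\|\xi\|)$, exactly as in Proposition \ref{T:deltafcn}, the unit perturbation costs only bounded factors $e^{\pm\|x_1\|}$, and for $A$ large relative to $\max_j\deg P_j$ the error sum is eventually below half the lower bound for $|P_1|$, yielding the estimate needed for \eqref{E:invertibility2}; the compactness argument for bounded $\|\xi\|$ is also fine. Your route buys a self-contained, elementary proof inside the paper's own framework, while the paper's route buys brevity by deferring to Ehrenpreis. Two small points to tidy: your chosen point satisfies $\|\zeta-\xi\|\le A\log(2+\|\xi\|)+1$, so absorb the $+1$ into the constant (replace $A$ by $A+1/\log 2$, say) to match the form of \eqref{E:invertibility2}; and the reduction to the case where every $P_j\not\equiv 0$ silently assumes $\mu\ne 0$ (not all $p_j\equiv 0$), which is indeed needed since the zero distribution is not invertible---the theorem implicitly excludes that degenerate case.
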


\section{Noncompact Symmetric Spaces: Preliminaries and Notation}

Now let $X=G/K$ be a noncompact symmetric space, where $G$ is a real noncompact semisimple Lie group with finite center, 
and $K$ is a maximal compact subgroup.  We will now fix the more or less standard terminology associated with these spaces, which may be found, for example, in Helgason's books \cite{DS} and \cite{GGA}.

Let $\mathfrak g$ denote the Lie algebra of $G$ and denote by $\langle X,Y\rangle$ the Killing form
$$
\langle X,Y\rangle = \mathrm{Tr} (\mathrm{ad}(X) \mathrm{ad}(Y)).
$$
If $\mathfrak k$ denotes the Lie algebra of $K$,
then we have a Cartan decomposition
$$
\mathfrak g=\mathfrak k \oplus\mathfrak p,
$$
where $\mathfrak p$ is the orthogonal complement of $\mathfrak k$ under the Killing form on $\mathfrak g$.  
The Killing form restricted to $\mathfrak p$ is positive definite, and we define a norm on 
$\mathfrak p$ by
$$
\| X\| = \sqrt{\langle X,X\rangle}.
$$
We endow the symmetric space $X$ with the left invariant Riemannian metric induced from 
this norm on $\mathfrak p \cong T_o\,(G/K)$, where $o$ is the identity coset $\{K\}$ in $X=G/K$.

Let $\mathfrak a$ be a maximal abelian subspace of $\mathfrak p$, and let $\Sigma$ denote the set of (restricted) roots of $\mathfrak g$ with respect to $\mathfrak a$.  For each $\alpha\in\Sigma$, let $\mathfrak g_\alpha$ be the corresponding root space, $\mathfrak g_\alpha=\{X\in\mathfrak g\colon [H,X]=\alpha(H)\,X\text{ for all }H\in\mathfrak a\}$.   Then we have the root space decomposition
$$
\mathfrak g=\mathfrak g_0\oplus\sum_{\alpha\in\Sigma}\mathfrak g_\alpha,
$$
with $\mathfrak g_0$ the centralizer of $\mathfrak a$ in $\mathfrak g$.  We have $\mathfrak g_0=\mathfrak h_k\oplus\mathfrak a$,
where $\mathfrak h_k$ is the centralizer of $\mathfrak a$ in $\mathfrak k$.

We fix a positive Weyl chamber $\mathfrak a^+$ in $\mathfrak a$, and let $\Sigma^+$ denote the corresponding set of positive restricted roots.  In addition, let $\Sigma_0$ and $\Sigma^+_0$ denote the set of indivisible roots and positive indivisible roots, respectively.  Let
$$
\mathfrak n=\sum_{\alpha\in\Sigma^+} \mathfrak g_\alpha,
$$
and let $N$ be the analytic subgroup of $G$ with Lie algebra $\mathfrak n$.  If $A$ is the analytic subgroup of $G$ with Lie algebra $\mathfrak a$, then we have the Iwasawa decomposition
\begin{equation}\label{E:iwasawa}
G=NAK.
\end{equation}
If $g\in G$, we write $g=n(g)\,\exp A(g)\,k(g)$, in accordance with \eqref{E:iwasawa}, with $A(g)\in\mathfrak a$.

If $\alpha\in\Sigma$, its \emph{multiplicity} is $m_\alpha=\dim\g_\alpha$.  We put
\begin{equation}\label{E:rho}
\rho=\frac 12\,\sum_{\alpha\in\Sigma^+} m_\alpha\,\alpha.
\end{equation}
Let $\{\alpha_1,\ldots,\alpha_l\}$ be the set of all simple roots.  The root lattice $\Lambda$ is the subset of $\a^*$ consisting of all sums $\sum_{j=1}^l k_j\,\alpha_j$, with each $k_j\in\zz$.  We also put
$\Lambda_+=\{\sum_{j=1}^l k_j\alpha_j\,:\,k_j\in\zz^+\text{ for all }j\}$.

For each $g\in G$, let $\tau(g)$ denote the left translation $x\mapsto g\cdot x$ on $X$, and if $f$ is a function on $X$, we put $\tau(g)f(x)=f(g^{-1}\cdot x)$.

Let $M$ be the centralizer and $M'$ the normalizer of $A$ in $K$, and let $W$ be the quotient group $M'/M$.  Then $W$ is the Weyl group associated with the root system $\Sigma$.
Let $\mathbb D(X)$ denote the algebra of left invariant differential operators on $X$, and let $\Gamma$ be the Harish-Chandra isomorphism from $\mathbb D(X)$ to the  algebra $I(\mathfrak a)=S(\mathfrak a)^W$ of $W$-invariant elements of the symmetric algebra $S(\mathfrak a)$.

A \emph{spherical function} on $X$ is a $K$-invariant joint eigenfunction $\varphi$ of $\mathbb D(X)$ normalized so that $\varphi(o)=1$.  The spherical functions are parametrized by the orbit space $\mathfrak a^*_\mathbb C/W$, where $\mathfrak a^*_\mathbb C$ is the complexified dual space of $\mathfrak a$.  The spherical function corresponding to $\lambda\in\mathfrak a^*_\mathbb C$ (or rather its $W$ orbit) is
\begin{equation}\label{E:spherical1}
\varphi_\lambda(x)=\int_B e^{(i\lambda+\rho)A(x,b)}\,db\qquad(x\in X).
\end{equation}
Here $B=K/M$, $db$ is the normalized $K$-invariant measure on the coset space $B$, and if $x=g\cdot o,\;b=kM$, we have put $A(x,b)=A(k^{-1}g)$.  $A(x,b)$ represents the ``directed distance'' from $o$ to the horocycle passing through $x$ and with ``normal'' $b$.  More precisely, by the Iwasawa decomposition, if $b=kM$, then we have $x=kan\cdot o$ for unique $a\in A$  (independent of the choice of  representative in the coset $kM$) and $n\in N$,  and $a=\exp A(x,b)$.

The spherical function $\varphi_\lambda$ satisfies
\begin{equation}\label{E:spherfcn-invariance}
\varphi_{w\cdot\lambda}(x)=\varphi_\lambda(x),\qquad \varphi_\lambda(k\cdot x)=\varphi_\lambda(x)
\end{equation}
for all $x\in X,k\in K,\,\lambda\in\a^*_{\mathbb C}$, and $w\in W$.

Later we will need the following integration formula based on the Iwasawa decomposition \eqref{E:iwasawa}.
Using the fact that $A$ normalizes $N$, we have $G=ANK$, so with appropriate normalizations of the Haar measures
on $A$ and $N$ we have
\begin{equation}\label{E:iwasawa2}
\int_X f(x)\,dx = \int_A \int_{N} f(an\cdot o)\,dn\,da,
\end{equation}
when this integral makes sense.

\section{The Fourier and Radon Transforms on $G/K$}
The \emph{Fourier transform} of a function $f\in\mathcal D(X)$ is the function $\widetilde f$ on $\a^*_\mathbb C\times B$ given by
\begin{equation}\label{E:ft-function}
\widetilde f(\lambda,b)=\int_X f(x)\,e^{(-i\lambda+\rho)\,A(x,b)}\,dx \qquad(\lambda\in\a^*_\mathbb C,\;b\in B).
\end{equation}
The Fourier transform extends naturally to compactly supported distributions on $X$: if $S\in\mathcal E'(X)$, $\widetilde S$ is the function on $\a^*_\mathbb C\times B$ given by
\begin{equation*}
  \widetilde S(\lambda,b) = S(e_{-\lambda,b})
\end{equation*}
where $e_{\lambda,b}(x) = e^{(i\lambda+\rho)\,A(x,b)}$ which is in $\mathcal E(X)$.
This definition is often written as an integral
\begin{equation}\label{E:ft-dist}
\widetilde S(\lambda,b)=\int_Xe^{(-i\lambda+\rho)\,A(x,b)}\,dS(x).
\end{equation}
In case $f$ (or $S$) is $K$-invariant, the Fourier transform becomes the \emph{spherical Fourier transform}, which for $S$ is given by
\begin{equation}\label{E:spherft}
\widetilde S(\lambda)=S(\varphi_{-\lambda}) = \int_X \,\varphi_{-\lambda}(x)\,dS(x) \qquad (\lambda\in \a^*_{\mathbb C}.)
\end{equation}
Moreover, if $S,\,T\in\mathcal E'(X)$, and if $T$ is $K$-invariant, we have
\begin{equation}\label{E:ft-conv}
(S*T)^\sim(\lambda,b)=\widetilde S(\lambda,b)\,\widetilde T(\lambda) \qquad (\lambda\in \a^*_{\mathbb C}, \;b\in K/M),
\end{equation}
with analogous relations in case $S$ or $T$ are replaced by elements of $\mathcal D(X)$.

The Fourier transform on $X$ is intimately connected to the \emph{horocycle Radon transform}.  A \emph{horocycle}  is an orbit in $X$ of a conjugate of $N$.   If  $\Xi$ denotes the set of all horocycles, then  $G$ acts transitively on $\Xi$ and the isotropy subgroup of the ``fundamental'' horocycyle $\xi_0=N\cdot o$ is $MN$.  Thus we can identify $\Xi$ with the homogeneous manifold $G/MN$.  Moreover, the Iwasawa decomposition also shows that the map
\begin{align}
K/M\times A&\to \Xi\nonumber\\
(kM,a)&\mapsto ka\cdot\xi_0\label{E:polar}
\end{align}
is a diffeomorphism.  The \emph{horocycle Radon transform} maps suitable functions on $X$ to suitable functions on $\Xi$, and is given by
\begin{equation}\label{E:horocycle-radon}
\widehat f(ka\cdot\xi_0)=\int_N f(kan\cdot o)\,dn\qquad (k\in K,\,a\in A).
\end{equation}
In particular, the map $f\mapsto\widehat f$ is a continuous linear map from $\mathcal D(X)$ into $\mathcal D(\Xi)$.  (See \cite{GASS}, Ch. I, \S3 for general continuity properties of integral transforms on homogeneous spaces.)  The Iwasawa decomposition implies the following  ``projection-slice'' relation between the Fourier and Radon transforms:
\begin{align}\label{E:proj-slice}
\widetilde f(\lambda,b)&=\int_A \widehat f(b,a)\,e^{(-i\lambda+\rho)\log a}\,da\nonumber\\
&=(e^{\rho(\log (\cdot))}\,\widehat f(b,\cdot))^*(\lambda)\qquad ((\lambda,b)\in\a^*_\cc\times B).
\end{align}

The \emph{dual horocycle transform} $\psi\mapsto\widecheck\psi$ maps $\mathcal E(\Xi)$ to $\mathcal E(X)$ by integrating over horocycles containing a given point:
\begin{equation}\label{E:dual-hor}
\widecheck\psi(g\cdot o)=\int_K \psi(gk\cdot\xi_0)\,dk\qquad (g\in G)
\end{equation}
for $\psi\in\mathcal E(X)$.   This transform is a continuous map from $\mathcal E(\Xi)$ to $\mathcal E(X)$ (again see \cite{GASS}, Ch. I, \S3).  Our group-theoretic setup implies that the horocycle transform and its dual are formal adjoints, hence the term \emph{dual transform}:
\begin{equation}\label{E:duality}
\int_{B\times A} \widehat f(b,a)\,\psi (b,a)\,e^{2\rho(\log a)}\,da\,db=\int_X f(x)\,\widecheck\psi(x)\,dx,
\end{equation}
for $f\in\mathcal D(X)$ and $\psi\in\mathcal E(\Xi)$.  (The $G$-invariant measure on $\Xi=G/MN=K/M\times A$ is $e^{2\rho(\log a)}\,da\,db$.)

We can use the adjoint relation \eqref{E:dual-hor} to define the Radon transform of any compactly supported distribution on $X$.  If $S\in\mathcal E'(X)$, its Radon transform $\widehat S$ is the distribution on $\Xi$ given by
\begin{equation}\label{E:radon-dist}
\widehat S(\psi)=S(\widecheck\psi)\qquad (\psi\in\mathcal E(\Xi)).
\end{equation}
Being the adjoint of  the map $\psi\mapsto\widecheck\psi$, we see that the map $S\mapsto\widehat S$ is a continuous linear map
from $\mathcal E'(X)$ to $\mathcal E'(\Xi)$.  

To derive a projection-slice theorem for distributions, we first define  ``restriction'' maps
$S\mapsto\widehat S_b$ from $\mathcal E'(X)$ to $\mathcal E'(A)$ for each $b\in B$ as follows.   Fix $b\in B$.  If $F\in\mathcal E(A)$, consider the  function $F^b\in\mathcal E(X)$ given by
\begin{equation}\label{E:plane-wave}
F^b(x)=F(\exp A(x,b))\qquad (x\in X).
\end{equation}
Let $b=kM$.  Then the function $F^b$ is  constant on horocycles with normal $b$; that is, on the horocycles $ka\cdot\xi_0$, and so is a \emph{horocycle plane wave}.  Since $X=kAN\cdot o$, it is not hard to see that $F\mapsto F^b$ is a continuous linear map from $\mathcal E(A)$ to $\mathcal E(X)$.

We define the map $S\mapsto\widehat S_b$ to be the adjoint of the map $F\mapsto F^b$:
\begin{equation}\label{E:restriction}
\widehat S_b(F)=S(F^b)\qquad (F\in\mathcal E(A)).
\end{equation}
The map $S\mapsto \widehat S_b$ is therefore a continuous linear map from $\mathcal E'(X)$ to $\mathcal E'(A)$.  In case $S=f\in\mathcal D(X)$, then by \eqref{E:iwasawa2}
$$
\widehat f_b(a)=\widehat f(b,a)=\int_N f(kan\cdot o)\,dn\qquad (a\in A, b=kM).
$$
From \eqref{E:ft-dist}, we now obtain the  projection-slice theorem for distributions:
\begin{equation}\label{E:proj-slice-dist}
\widetilde S(\lambda,b)=\left(e^\rho\,\widehat S_b\right)^*(\lambda)
\end{equation}
where the Euclidean Fourier transform is taken over $\mathfrak a$.
If $\mu\in\mathcal E'(X)$ is $K$-invariant, relation \eqref{E:ft-conv} shows that
\begin{equation}\label{E:rt-conv}
e^\rho\,\widehat{(S*\mu)}_b=(e^\rho\,\widehat S_b)*\mu_\mathcal{A},
\end{equation}
where the convolution on the right hand side is taken over the Euclidean space $\a$, and $\mu_\mathcal{A}\in\mathcal E'(\a)$ is the \emph{Abel transform} of $\mu$:
\begin{equation}\label{E:abel}
\mu_\mathcal{A}=e^\rho\,\widehat\mu_b.
\end{equation}
Here $b$ is any element of $B$.  (Since $\mu$ is $K$-invariant, the choice of $b$ does not matter.)  

Note also that as a special case of \eqref{E:proj-slice-dist}, we have
\begin{equation}\label{E:abel-ft}
(\mu_a)^*(\lambda)=\widetilde\mu(\lambda)\qquad (\lambda\in\a^*_\cc).
\end{equation}

There are Paley-Wiener theorems that describe the ranges of the Fourier transforms \eqref{E:ft-function} and \eqref{E:ft-dist}.
To state them properly,  we first note that $\widetilde S(\lambda,b)$ and $\widetilde f(\lambda,b)$ are smooth on $\a^*_\mathbb C\times B$ and holomorphic in $\lambda$; moreover, the function on $\a^*_\mathbb C$ given by
\begin{equation}\label{E:ft-W-inv}
\lambda\mapsto\int_B\widetilde S(\lambda,b)\,e^{(i\lambda+\rho)A(x,b)}\,db
\end{equation}
turns out to be $W$-invariant, and $\widetilde f$ satisfies a similar property.  The relation \eqref{E:ft-W-inv} is a consequence of the functional relation
$$
\varphi_\lambda(g^{-1}h\cdot o)=\int_B e^{(-i\lambda+\rho)A(g\cdot o,b)}\,e^{(i\lambda+\rho)A(h\cdot o,b)}\,db, 
$$
for all $\lambda\in\a^*_\cc$, and $g,\,h\in G$.

The space $\mathfrak a$ is equipped with the Killing form inner product from $\mathfrak p$. Fix a basis for $\mathfrak a$ and a dual basis on $\mathfrak a^*$
arising from the Killing form inner product. Denote by $\langle \lambda,\eta \rangle$ the Euclidean inner product of $\lambda,\eta \in \mathfrak a^*$ in terms of
the basis for $\mathfrak a^*$. Finally extend this inner product to a bilinear form on $\mathfrak a^*_\mathbb{C} = \mathfrak a^*+i\mathfrak a$, and let $\| \lambda \|$
denote the norm of $\lambda \in \mathfrak a^*_\mathbb{C}$ inherited from the bilinear form.

For $R>0$, a function $\psi(\lambda,b)$, smooth on $\a^*_\cc\times B$ and holomorphic in $\lambda$ is said to be \emph{rapidly decreasing of uniform exponential type $R$} provided that for any $N\in\zz^+$, $\psi$
 satisfies the condition 
\begin{equation}\label{E:exptype3}
\sup_{(\lambda,b)\in\a^*_\cc\times B}(1+\|\lambda\|)^N\,e^{-R\,\|\text{Im}\,\lambda\|}\,|\psi(\lambda,b)|<\infty.
\end{equation}
Let $\mathcal H_R(\a^*_\cc\times B)$ denote the vector space of all such functions, and let $\mathcal H(\a^*_\cc\times B)$ be their union for all $R$.  Let $\mathcal H_R^W(\a^*_\cc\times B)$ denote the subspace of $\mathcal H_R(\a^*_\cc\times B)$ consisting of functions $\psi$ satisfying the invariance condition \eqref{E:ft-W-inv}, and let $\mathcal H^W(\a^*_\cc\times B)$ be their union.

Let $\mathcal D_R(X)$ denote the vector space of all $C^\infty$ functions on $X$ with support in the closed ball $\overline B_R(o)$.
Then we have the following Paley-Wiener theorem.
\begin{theorem}\label{T:PW1} (\cite{He1}, Theorem 8.3.)
  The Fourier transform $f\mapsto\widetilde f$ is a linear bijection from $\mathcal D_R(X)$ onto $\mathcal H^W_R(\a^*_\cc\times B)$.
\end{theorem}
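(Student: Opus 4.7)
The forward implication---that $\widetilde f\in\mathcal H^W_R(\a^*_\cc\times B)$ whenever $f\in\mathcal D_R(X)$---proceeds in three standard steps. First, smoothness on $\a^*_\cc\times B$ and holomorphy in $\lambda$ follow by differentiating \eqref{E:ft-function} under the integral, using that $x\mapsto A(x,b)$ is smooth jointly in $(x,b)$. Second, the projection-slice identity \eqref{E:proj-slice} exhibits $\widetilde f(\cdot,b)$ as the Euclidean Fourier transform of the smooth function $a\mapsto e^{\rho(\log a)}\,\widehat f(b,a)$ on $\a$; the classical Iwasawa projection bound $\|A(x,b)\|\le d(o,x)$ confines its support to the ball of radius $R$ in $\a$ uniformly in $b$, so the Euclidean Paley--Wiener theorem yields both the exponential type $R$ bound and the rapid polynomial decay required by \eqref{E:exptype3}. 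Third, the $W$-invariance condition \eqref{E:ft-W-inv} follows from the functional identity
$$
\varphi_\lambda(g^{-1}h\cdot o)=\int_B e^{(-i\lambda+\rho)A(g\cdot o,b)}\,e^{(i\lambda+\rho)A(h\cdot o,b)}\,db
$$
combined with the symmetry $\varphi_\lambda=\varphi_{w\cdot\lambda}$ of \eqref{E:spherfcn-invariance}.

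For surjectivity, given $\psi\in\mathcal H^W_R(\a^*_\cc\times B)$, my plan is to exhibit the preimage through the Fourier inversion formula on $X$:
$$
f(x)=\frac{1}{|W|}\int_{\a^*}\int_B \psi(\lambda,b)\,e^{(i\lambda+\rho)A(x,b)}\,|c(\lambda)|^{-2}\,db\,d\lambda,
$$
where $c(\lambda)$ denotes the Harish-Chandra $c$-function. Smoothness of $f$ is immediate from the rapid polynomial decrease of $\psi$ together with the known polynomial growth of $|c(\lambda)|^{-2}$ on $\a^*$, and the identity $\widetilde f=\psi$ follows from the Plancherel inversion theorem on $X$, where the hypothesized $W$-invariance of $\psi$ is what legitimately unfolds the integral from a fundamental Weyl chamber back to all of $\a^*$.

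The substantive difficulty is verifying $\operatorname{supp} f\subset\overline B_R(o)$. The strategy is contour deformation: for $x$ with $d(o,x)>R$, choose $\eta\in\a^*$ pointing away from $x$ in a suitable sense, shift the $\lambda$-contour from $\a^*$ into the tube $\a^*+iT\eta$, and exploit the exponential type bound \eqref{E:exptype3} on $\psi$ to produce an integrand decay factor $e^{-T\eta(A(x,b))}$ which annihilates the integral as $T\to\infty$. The hard part is that $|c(\lambda)|^{-2}$ possesses poles off $\a^*$ coming from the Gamma function factors in the Gindikin--Karpelevich product representation; the remedy is to decompose $|c(\lambda)|^{-2}$ accordingly and rewrite the inversion integral as an alternating sum indexed by $W$ of pieces that are individually holomorphic in appropriately chosen tubes. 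The $W$-invariance of $\psi$ is precisely what permits these pieces to reassemble without residual boundary contributions, so that the contour shift is legitimate and forces $f(x)=0$ in the limit. This is the technical heart of the theorem and is where the role of the $W$-symmetry in \eqref{E:ft-W-inv} becomes essential.
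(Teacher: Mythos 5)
The paper does not prove this statement: it is quoted from \cite{He1} (Theorem 8.3), with proofs referred to \cite{GASS}, Ch.~III, \S 5, so there is no internal argument to measure you against; your outline is essentially the strategy of that cited proof. Your forward direction is sound and standard: holomorphy and smoothness by differentiation under \eqref{E:ft-function}, the support bound $\|A(x,b)\|\le d(o,x)$ feeding the Euclidean Paley--Wiener theorem through \eqref{E:proj-slice} (with uniformity in $b$ coming from compactness of $B$), and the $W$-symmetry \eqref{E:ft-W-inv} obtained exactly as the paper itself does, from the functional equation for $\varphi_\lambda$ and \eqref{E:spherfcn-invariance}.

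The gap is in the surjectivity half, and it is genuine. First, ``$\widetilde f=\psi$ follows from the Plancherel inversion theorem'' is not an argument: inversion says $f\mapsto\widetilde f\mapsto f$ recovers $f\in\mathcal D(X)$, not that synthesis followed by analysis recovers $\psi$; one needs (after the support statement puts $f$ in $\mathcal D_R(X)$) a Plancherel-orthogonality step, pairing $f$ against arbitrary $g\in\mathcal D(X)$ and using that both $\psi$ and $\widetilde f$ satisfy \eqref{E:ft-W-inv}, to conclude $\widetilde f=\psi$. Second, and more seriously, the support argument you describe --- decomposing $|c(\lambda)|^{-2}$ into a $W$-indexed alternating sum of pieces separately holomorphic in tubes, reassembling by $W$-invariance --- is not the mechanism that works and is unsubstantiated as stated: off $\a^*$ one must work with $1/(c(\lambda)c(-\lambda))$, and no holomorphy or growth control of the proposed pieces is given. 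In the classical proof for $K$-invariant $\psi$, the $W$-invariance of $\lambda\mapsto\int_B\psi(\lambda,b)e^{(i\lambda+\rho)A(x,b)}\,db$ is used with the Harish-Chandra expansion $\varphi_\lambda=\sum_{s\in W}c(s\lambda)\Phi_{s\lambda}$ (cf.\ \eqref{E:HC-expansion}) to collapse the inversion integral to a single term $\int_{\a^*}\psi(\lambda)\Phi_\lambda(x)\,c(-\lambda)^{-1}d\lambda$, and the contour shift then requires two specific inputs: holomorphy and polynomial growth of $1/c(-\lambda)$ on the relevant tube over a Weyl chamber, and Gangolli-type bounds on the coefficients $\Gamma_\mu(\lambda)$ uniform along the shifted contour. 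For general, non-$K$-invariant $\psi(\lambda,b)$ there is no such expansion of $e^{(i\lambda+\rho)A(x,b)}$ at all, and the cited proof must reduce to $K$-finite functions and expansions of generalized spherical (Eisenstein-type) integrals. None of these estimates or reductions appear in your proposal, so the ``no residual boundary contributions'' claim --- which you correctly identify as the technical heart --- remains an assertion; the surjectivity direction is an outline, not a proof.
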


We now state the corresponding Paley-Wiener theorem for compactly supported distributions.  For this, we say that a function $\Psi(\lambda,b)$, smooth on $\a^*_\cc\times B$ and holomorphic in $\lambda$, is \emph{ of uniform exponential type $R>0$ in $\a^*_\cc$ and of slow growth} provided that there exist constants $A>0$ and $N\in\zz^+$ such that
\begin{equation}\label{E:exptype4}
|\Psi(\lambda,b)|\leq A\,(1+\|\lambda\|)^N\,e^{R\,\|\text{Im}\,\lambda\|}\qquad ((\lambda,b)\in\a^*_\cc\times B).
\end{equation}
Let $\mathcal K_R(\a^*_\cc\times B)$ denote the vector space of all such functions, and let $\mathcal K_R^W(\a^*_\cc\times B)$ be the subspace consisting of those functions $\Psi$ satisfying the $W$-invariance condition \eqref{E:ft-W-inv}.  Finally, let $\mathcal K(\a^*_\cc\times B)$ and $\mathcal K^W(\a^*_\cc\times B)$ denote the union of the subspaces $\mathcal K_R$ and $\mathcal K_R^W$, respectively, for all $R>0$.

Let $\mathcal E'_R(X)$ denote the subspace of $\mathcal E'(X)$ consisting of all distributions with support in $\overline B_R(o)$.

\begin{theorem}\label{T:PW2} (\cite{GASS}, Ch. III, Corollary 5.9.)The Fourier transform $S\mapsto\widetilde S$ is a linear bijection from $\mathcal E'_R(X)$ onto $\mathcal K^W_R(\a^*_\cc\times B)$.
\end{theorem}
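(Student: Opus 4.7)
My plan is to split the theorem into three parts: the forward inclusion $\widetilde{\mathcal E'_R(X)} \subseteq \mathcal K^W_R(\a^*_\cc \times B)$, injectivity, and surjectivity, with the surjectivity step being the principal obstacle.

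The forward inclusion is routine. The map $(\lambda,b) \mapsto e_{-\lambda,b}$ is smooth into $\mathcal E(X)$ and holomorphic in $\lambda$, so $\widetilde S(\lambda,b) = S(e_{-\lambda,b})$ inherits both regularity properties. Since $S \in \mathcal E'_R(X)$ has some finite order $N$, its continuity estimate, together with the classical bound $|A(x,b)| \leq d(x,o) \leq R + \delta$ on a neighborhood of $\mathrm{supp}\,S$ and the polynomial-in-$\lambda$ growth of the $x$-derivatives of $e^{(-i\lambda+\rho)A(x,b)}$, yields \eqref{E:exptype4}. The $W$-invariance condition \eqref{E:ft-W-inv} follows from the same functional identity that implies it in the function case. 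Injectivity reduces to showing that vanishing of $\widetilde S$ forces $S$ to annihilate a dense subspace of $\mathcal E(X)$: the Fourier inversion formula realizes every $f \in \mathcal D(X)$ as an integral of exponentials $e_{\lambda,b}$, and $\mathcal D(X)$ is dense in $\mathcal E(X)$, so $S = 0$.

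For surjectivity, given $\Psi \in \mathcal K^W_R(\a^*_\cc \times B)$, I would regularize. Choose a $K$-bi-invariant $\phi_\epsilon \in \mathcal D_\epsilon(X)$ with $\widetilde{\phi_\epsilon}(0) = 1$, and set $\chi_\epsilon := \widetilde{\phi_\epsilon} \in \mathcal H^W_\epsilon(\a^*_\cc \times B)$. This family is uniformly bounded on $\a^* \times B$ and converges to $1$ on compacta in $\a^*_\cc \times B$. Then $\Psi\chi_\epsilon \in \mathcal H^W_{R+\epsilon}(\a^*_\cc \times B)$, and Theorem~\ref{T:PW1} supplies $f_\epsilon \in \mathcal D_{R+\epsilon}(X)$ with $\widetilde{f_\epsilon} = \Psi\chi_\epsilon$. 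For $g \in \mathcal D(X)$, the Fourier inversion formula on $X$ gives
$$
\int_X f_\epsilon(x)\,g(x)\,dx = \frac{1}{|W|}\int_{\a^*\times B} \Psi(-\lambda,b)\,\chi_\epsilon(-\lambda,b)\,\widetilde g(\lambda,b)\,|c(\lambda)|^{-2}\,d\lambda\,db,
$$
where $c$ is Harish-Chandra's $c$-function. Since $\Psi$ and $|c|^{-2}$ grow polynomially on $\a^*$ while $\widetilde g$ decays rapidly, dominated convergence as $\epsilon\to 0$ produces a limit $S(g)$, defining $S \in \mathcal D'(X)$. Because $\mathrm{supp}(f_\epsilon) \subseteq \overline B_{R+\epsilon}(o)$, any $g$ whose support avoids $\overline B_R(o)$ yields $S(g) = 0$, so $\mathrm{supp}(S) \subseteq \overline B_R(o)$ and $S$ extends continuously to $\mathcal E(X)$. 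Finally, evaluating $\widetilde S(\lambda,b) = \lim_\epsilon \widetilde{f_\epsilon}(\lambda,b) = \Psi(\lambda,b)$ completes the proof.

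The main obstacle is the surjectivity step, specifically constructing the regularizing family $\chi_\epsilon$ with the required $W$-invariance, uniform boundedness, and exponential type $\leq \epsilon$, and then justifying the dominated-convergence passage — which demands the polynomial growth of $|c(\lambda)|^{-2}$ on $\a^*$, a nontrivial input from the Gindikin-Karpelevič product formula.
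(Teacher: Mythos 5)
The paper offers no internal proof of this statement --- it is quoted from Helgason (\cite{GASS}, Ch.~III, Corollary 5.9) --- so the comparison can only be with the standard argument, and your overall architecture matches it: the forward estimate plus $W$-invariance, injectivity via the inversion formula, and surjectivity by multiplying $\Psi$ with spherical transforms $\chi_\epsilon=\widetilde{\phi_\epsilon}$ of $K$-invariant approximate identities, invoking Theorem~\ref{T:PW1} for $\Psi\chi_\epsilon\in\mathcal H^W_{R+\epsilon}$, and passing to the limit using the Plancherel density $|c(\lambda)|^{-2}$, whose polynomial growth on $\a^*$ you correctly identify as the needed input. The surjectivity step is sound as sketched; the only detail worth making explicit there is that the identity $\widetilde S(\lambda,b)=\lim_\epsilon\widetilde f_\epsilon(\lambda,b)$ pairs $f_\epsilon$ against $e_{-\lambda,b}\in\mathcal E(X)$, not against a test function, so one should fix a cutoff $\psi\in\mathcal D(X)$ with $\psi\equiv 1$ on $\overline B_{R+1}(o)$ and use $\int f_\epsilon e_{-\lambda,b}=\int f_\epsilon\,\psi e_{-\lambda,b}\to S(\psi e_{-\lambda,b})=S(e_{-\lambda,b})$, which is legitimate because all supports lie in the fixed ball.

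The genuine gap is in the step you call routine: the forward inclusion. The continuity estimate for a distribution $S$ of finite order $N$ bounds $|S(f)|$ by derivative sup-norms over a compact \emph{neighborhood} of $\operatorname{supp}S$ (one cannot in general take the support itself), so applying it to $e_{-\lambda,b}$ together with $|A(x,b)|\le d(o,x)\le R+\delta$ yields only
$|\widetilde S(\lambda,b)|\le A_\delta(1+\|\lambda\|)^N e^{(R+\delta)\|\mathrm{Im}\,\lambda\|}$
for every $\delta>0$, with $A_\delta$ blowing up as $\delta\to 0$. That places $\widetilde S$ in $\bigcap_{\delta>0}\mathcal K^W_{R+\delta}$, not in $\mathcal K^W_R$ as \eqref{E:exptype4} requires, so the theorem as stated is not yet proved. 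The standard repair (as in H\"ormander's proof of the Euclidean distributional Paley--Wiener theorem, Theorem 7.3.1 of \cite{Ho1}) is to evaluate $S$ on $\chi_\delta\,e_{-\lambda,b}$, where $\chi_\delta\in\mathcal D(X)$ equals $1$ near $\overline B_R(o)$, is supported in its $\delta$-neighborhood, and has derivatives up to order $N$ of size $O(\delta^{-N})$, and then to choose $\delta\asymp(1+\|\lambda\|)^{-1}$: the factor $e^{\delta\|\mathrm{Im}\,\lambda\|}$ is then bounded by a constant, and the loss $\delta^{-N}$ is absorbed into a larger polynomial power, giving exponential type exactly $R$. Without this (or an equivalent device), the image is not shown to land in $\mathcal K^W_R(\a^*_\cc\times B)$; this is precisely the non-formal content of the distributional version as opposed to Theorem~\ref{T:PW1}.
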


For proofs of the two Paley-Wiener theorems above, see \cite{GASS}, Ch. III, \S5.

\section{A Template for Surjectivity}
Suppose that $\mu\in\mathcal E'(X)$ is $K$-invariant.  Let $c_\mu$ be the convolution operator  on $\mathcal E(X)$ given by
$c_\mu(f)=f*\mu$.  Any orthonormal basis of $\mathfrak a$ provides a linear isometry from
$\mathfrak a_\cc^*$ onto $\cc^l$ where
$l=\dim(\mathfrak a)$. A function $u$ on $\a^*_\cc$ is called \emph{slowly decreasing} if it is slowly decreasing as a function on $\cc^l$.
Our aim in this section is to prove the following theorem.

\begin{theorem}\label{T:surjectivity}  Let $\mu$ be a $K$-invariant distribution in $\mathcal E'(X)$ whose spherical Fourier transform $\widetilde\mu(\lambda)$ is a slowly decreasing function on $\a^*_\cc$.  Then the convolution operator $c_\mu\colon \mathcal E(X)\to\mathcal E(X)$ is
surjective.
\end{theorem}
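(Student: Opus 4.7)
The plan is to apply Theorem \ref{T:frechet} to the Fréchet space $\mathcal E(X)$: surjectivity of $c_\mu$ is equivalent to its transpose $c_\mu^*\colon\mathcal E'(X)\to\mathcal E'(X)$ being injective with weak$^*$-closed range. One checks that $c_\mu^*$ is itself convolution by a $K$-invariant compactly supported distribution whose spherical Fourier transform equals $\widetilde\mu(-\lambda)$, and slow decrease is preserved under the reflection $\lambda\mapsto-\lambda$. For injectivity, $c_\mu^*T=0$ combined with the convolution rule \eqref{E:ft-conv} forces $\widetilde T(\lambda,b)\,\widetilde\mu(-\lambda)=0$; because $\widetilde\mu$ is not identically zero, its zero variety $V\subset\a^*_\cc$ has empty interior, so for each $b$ the entire function $\widetilde T(\cdot,b)$ vanishes on an open dense set and hence everywhere, giving $T=0$ via Theorem \ref{T:PW2}.

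For the closed-range part, the Paley-Wiener theorem for distributions (Theorem \ref{T:PW2}) identifies the image of $c_\mu^*$ with
\[
\mathcal R \;=\; \bigl\{\Psi\in\mathcal K^W(\a^*_\cc\times B) : \Psi(\lambda,b)/\widetilde\mu(-\lambda)\ \text{is entire in}\ \lambda\bigr\}.
\]
Indeed, if $\Psi\in\mathcal K_R^W$ and this quotient is entire, Proposition \ref{T:uniform-exp-type} (applied with $u=\widetilde\mu(-\cdot)$ and $v=\Psi(\cdot,b)$, using the uniform bound on $\Psi$ supplied by $\mathcal K_R$) upgrades entireness to a slow-growth exponential-type estimate on the quotient, uniform in $b$; the $W$-invariance condition \eqref{E:ft-W-inv} passes to the quotient because $\widetilde\mu(-\cdot)$ is $W$-invariant in $\lambda$. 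Hence the quotient belongs to $\mathcal K^W$ and, by Theorem \ref{T:PW2}, is the Fourier transform of some $S\in\mathcal E'(X)$ with $c_\mu^*S$ having Fourier transform $\Psi$.

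To see that $\mathcal R$ is weak$^*$-closed---equivalently, strongly closed by the note following Theorem \ref{T:frechet}---suppose $T_n\to T$ strongly with $T_n\in\mathcal R$. Banach-Steinhaus renders $\{T_n\}$ equicontinuous on $\mathcal E(X)$, so the $T_n$ have uniformly bounded support and order, yielding a common estimate $|\widetilde T_n(\lambda,b)|\leq C(1+\|\lambda\|)^N e^{R\|\mathrm{Im}\,\lambda\|}$. Proposition \ref{T:uniform-exp-type} then supplies a corresponding uniform bound on the entire quotients $\widetilde T_n(\cdot,b)/\widetilde\mu(-\cdot)$; a Montel-type normal-families argument extracts a locally uniformly convergent subsequence whose holomorphic limit necessarily agrees with $\widetilde T(\cdot,b)/\widetilde\mu(-\cdot)$ off $V$, so the latter extends to an entire function and $T\in\mathcal R$. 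The principal obstacle is precisely this last step: without the slow decrease of $\widetilde\mu$ (and the uniform bound from Proposition \ref{T:uniform-exp-type} it yields) one would only have pointwise convergence of the quotients off $V$, with no compactness to prevent the emergence of poles of $\widetilde T(\cdot,b)/\widetilde\mu(-\cdot)$ in the limit.
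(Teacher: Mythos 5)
Your architecture coincides with the paper's through the first two steps: Fr\'echet duality (Theorem \ref{T:frechet}), injectivity of $c_{\mu^\vee}$ from the nowhere-dense zero set of $\widetilde\mu$, and the identification of the range of the adjoint with the set of $T$ whose quotient $\widetilde T(\lambda,b)/\widetilde\mu(-\lambda)$ is entire, via the forward Paley--Wiener bound, Proposition \ref{T:uniform-exp-type}, the $W$-invariance of $\widetilde\mu$, and Theorem \ref{T:PW2}. Two points in your write-up are genuine gaps. First, in the range identification you need the quotient to lie in $\mathcal K^W(\a^*_\cc\times B)$, and Theorem \ref{T:PW2} requires joint smoothness on $\a^*_\cc\times B$, not merely holomorphy in $\lambda$ for each fixed $b$; this is not automatic and is exactly what the paper's Appendix B (Proposition \ref{T:smoothness}) supplies. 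Second, and more seriously, your closed-range step only proves \emph{sequential} closedness under strongly convergent sequences. The strong (and weak$^*$) topology on $\mathcal E'(X)$ is not metrizable, so sequential closedness does not by itself give closedness; to repair this you would need either the Banach--Dieudonn\'e/Krein--\v Smulian theorem (a convex set is weak$^*$-closed iff its intersection with every equicontinuous polar is, and these polars are weak$^*$-metrizable because $\mathcal E(X)$ is separable -- after which your uniform Paley--Wiener bound plus a removable-singularity or Montel argument does finish the job), or the nontrivial fact that the strong dual of a Fr\'echet--Montel space is a sequential space. As written, neither is invoked, so the step ``sequentially closed $\Rightarrow$ weak$^*$-closed'' is unjustified.

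It is worth noting that the paper sidesteps this topological issue entirely and this is where its proof genuinely differs from yours: using the projection-slice relation \eqref{E:proj-slice-dist} and \eqref{E:rt-conv}, it rewrites the range as $\{T\in\mathcal E'(X)\,:\,e^\rho\,\widehat T_b\in c_{\mu_a}(\mathcal E'(\a))\text{ for all }b\in B\}$, where $\mu_a$ is the Abel transform of $\mu$. Since $(\mu_a)^*=\widetilde\mu$ is slowly decreasing, $\mu_a$ is an invertible distribution on the Euclidean space $\a$, so $c_{\mu_a}(\mathcal E'(\a))$ is closed by Theorem \ref{T:invertible-dist}(iv); as each map $T\mapsto e^\rho\,\widehat T_b$ is continuous, the range is an intersection of preimages of a closed set and hence closed, with no compactness or normal-families argument and no discussion of sequentiality needed. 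If you add the Appendix~B smoothness step and replace your sequential argument either by this Abel-transform reduction or by the Krein--\v Smulian route restricted to equicontinuous sets, your proof becomes complete.
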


Note that since $\mu$ is $K$-invariant, the $W$-invariance \eqref{E:spherfcn-invariance} of $\lambda\mapsto\varphi_\lambda(x)$ and
\eqref{E:spherft} imply that
 $\widetilde \mu$ is  $W$-invariant, and the forward Paley-Wiener Theorem (a consequence of the projection-slice theorem \eqref{E:proj-slice-dist}) shows that $\widetilde \mu(\lambda)$ is   of  exponential type and slow growth in $\a^*_\cc$.

\begin{proof}
Suppose that $\widetilde\mu(\lambda)$ is slowly decreasing.  We now define the $K$-invariant distribution $\mu^\vee\in\mathcal E'(X)$ as follows. Noting that $\mu$ is determined by its restriction to the (closed) subspace $\mathcal E^\#(X)$ of $\mathcal E(X)$ consisting of all $K$-invariant functions, we put
$$
\mu^\vee(f)=\int_{G/K} f(g^{-1}K)\,d\mu(gK)\qquad\qquad(f\in\mathcal E^\#(X)).
$$
Note that the function $gK\mapsto f(g^{-1}K)$ belongs to $\mathcal E^\#(X)$.

Now the adjoint map to $c_\mu\colon\mathcal E(X)\to\mathcal E(X)$ is
\begin{align}\label{E:c-adjoint}
c_{\mu^\vee}\colon\mathcal E'(X)&\to\mathcal E'(X)\nonumber\\
T&\mapsto T*\mu^\vee.
\end{align}
We will show that this latter map is injective and has closed range in the strong (and hence weak$^*$) topology on $\mathcal E'(X)$.
The theorem will then follow from Theorem \ref{T:frechet}.  (As mentioned earlier, strongly closed subspaces of $\mathcal E'(X)$ are also weak$^*$ closed.)

The spherical Fourier transform of $\mu^\vee$ is $\widetilde\mu(-\lambda)$, which is also slowly decreasing, so to simplify the notation, we will replace 
$\mu^\vee$ by $\mu$ and show that the map $c_\mu\colon\mathcal E'(X)\to\mathcal E'(X)$ is injective and has closed range.
Let us first show that the map $c_\mu$ is injective. 
The set of all $\lambda$ for which $\widetilde\mu(\lambda)\neq 0$ is open and dense in $\a^*_\cc$, and therefore
$T*\mu=0$ implies that
$$
\widetilde T(\lambda,b)\,\widetilde\mu(\lambda)=0\qquad ((\lambda,b)\in\a^*_\cc\times B),
$$
from which we obtain $\widetilde T(\lambda,b)\equiv 0$, and hence $T=0$.




Now we claim that
\begin{multline}\label{E:range1}
c_\mu(\mathcal E'(X))\\
=\{T\in\mathcal E'(X)\,:\,\widetilde T(\lambda,b)/\widetilde\mu(\lambda)\text{ is holomorphic in }\lambda\text{ for each }b\in B\}
\end{multline}
and that this set is closed in $\mathcal E'(X)$. 

From relation \eqref{E:ft-conv}, it is clear that the left hand side above is contained in the right.  On the other hand, suppose that $T$ belongs to the right hand side above. Since 
$\widetilde T(\lambda,b)/\widetilde\mu(\lambda)$ is holomorphic for each fixed $b$,
Proposition \ref{T:smoothness} in Appendix B then implies that $\widetilde T(\lambda,b)/\widetilde\mu(\lambda)$ is smooth on $\a^*_\cc\times B$.

By the forward Paley-Wiener Theorem on $X$, there exist positive constants $A$ and $R$  and an integer $N\in\zz^+$ 
(all of which do not depend on $b$)
such that $\widetilde T$ satisfies the growth condition
\begin{equation}\label{E:ft-exptype}
|\widetilde T(\lambda,b)|\leq A\,(1+\|\lambda\|)^N\,e^{R\,\|\text{Im}\,\lambda\|}\qquad (\lambda,\in\a^*_\cc)
\end{equation}
for all $b\in B$.
Hence by Proposition \ref{T:uniform-exp-type}, there exist positive constants $A'$ and $R'$, and an integer $N'\in\zz^+$, such that
\begin{equation}\label{E:ft-exptype1}
\left|\frac{\widetilde T(\lambda,b)}{\widetilde \mu(\lambda)}\right|\leq A'\,(1+\|\lambda\|)^{N'}\,e^{R'\,\|\text{Im}\,\lambda\|}\qquad (\lambda,\in\a^*_\cc)
\end{equation}
for all $b\in B$.  It follows that $\widetilde T(\lambda,b)/\widetilde \mu(\lambda)\in\mathcal K(\a^*_\cc\times B)$,
and it remains to be shown that it is in $K^W(\a^*_\cc\times B)$.

By assumption $\widetilde T\in\mathcal K^W(\a^*_\cc\times B)$ 
which means that
$$
\int_B \widetilde T(\lambda,b)\,e^{(i\lambda+\rho)\,A(x,b)}\,db=\int_B \widetilde T(\sigma\lambda,b)\,e^{(i\sigma\lambda+\rho)\,A(x,b)}\,db.
$$
The $W$-invariance of $\widetilde\mu(\lambda)$ thus gives
$$
\int_B \frac{\widetilde T(\lambda,b)}{\widetilde\mu(\lambda)}\,e^{(i\lambda+\rho)\,A(x,b)}\,db=\int_B \frac{\widetilde T(\sigma\lambda,b)}{\widetilde\mu(\sigma\lambda)}\,e^{(i\sigma\lambda+\rho)\,A(x,b)}\,db
$$
for all $\sigma\in W$ and all $\lambda\in\a^*_\cc$ for which $\widetilde\mu(\lambda)\neq 0$.

This set is dense in $\a^*_\cc$, and the relation above
therefore holds for all $\lambda$ by continuity, since the integrands are uniformly continuous on compact sets.

Now that we have established that $\widetilde T(\lambda,b)/\widetilde\mu(\lambda)\in\mathcal K^W(\a^*_\cc\times B)$, the Paley-Wiener Theorem on $X$ (Theorem \ref{T:PW2}) implies that there exists a distribution $S\in\mathcal E'(X)$ such that $\widetilde S(\lambda,b)=\widetilde T(\lambda,b)/\widetilde\mu(\lambda)$.  Hence $T=S*\mu$, proving the range characterization \eqref{E:range1}.

Next let us prove that the right hand side of \eqref{E:range1} is a closed subset of $\mathcal E'(X)$.  Since the Abel transform $\mu_a$ satisfies \eqref{E:abel-ft}, $\mu_a\in\mathcal E'(\a)$ is an invertible distribution on the Euclidean space $\a$.  Hence by Theorem \ref{T:invertible-dist}, the convolution operator $v\mapsto v*\mu_a$ on $\mathcal E'(\a)$ has closed range $c_{\mu_a}(\mathcal E'(\a))$.  Then by relations \eqref{E:range1}, \eqref{E:proj-slice-dist}, and \eqref{E:rt-conv}, we conclude that
\begin{equation}\label{E:ramge2}
c_\mu(\mathcal E'(X))=\{T\in\mathcal E'(X)\,:\,e^\rho\,\widehat{T}_b\in c_{\mu_a}(\mathcal E'(\a))\text{ for all }b\in B\}
\end{equation}
For each $b\in B$, the linear map $T\mapsto e^\rho\,\widehat{T}_b$ from $\mathcal E'(X)$ to $\mathcal E'(\a)$ is continuous.  It follows that $c_\mu(\mathcal E'(X))$ is a closed subspace of $\mathcal E'(X)$, since $\mathcal E'(\mathfrak{a})*\mu_a$ is closed in $\mathcal E'(\mathfrak{a})$.  This also of course proves that $c_{\mu^\vee}(\mathcal E'(X))$ is closed in $\mathcal E'(X)$, finishing the proof of Theorem \ref{T:surjectivity}.
\end{proof}

\begin{corollary} (Helgason, 1973)
Every nonzero $G$-invariant differential operator on $X$ is a surjective map from $\mathcal E(X)$ onto $\mathcal E(X)$.
\end{corollary}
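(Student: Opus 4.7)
The plan is to reduce the corollary directly to Theorem \ref{T:surjectivity} by realizing each nonzero $G$-invariant differential operator $D \in \mathbb{D}(X)$ as a convolution operator with a $K$-invariant, compactly supported distribution whose spherical Fourier transform is a polynomial on $\mathfrak{a}^*_\cc$. Since every nonzero polynomial is slowly decreasing, Theorem \ref{T:surjectivity} will then immediately deliver the desired surjectivity.

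First I would construct $\mu_D \in \mathcal{E}'(X)$ supported at the origin $o$ by the formula $\mu_D(f) = (Df)(o)$: its support is $\{o\}$ since $D$ is a differential operator, and $\mu_D$ is $K$-invariant because $D$ commutes with the $K$-action and $K$ fixes $o$. The $G$-invariance of $D$ then identifies the action of $D$ on $\mathcal{E}(X)$ with convolution by $\mu_D^\vee$ in the convention used in Theorem \ref{T:surjectivity}. Next I would compute the spherical Fourier transform of $\mu_D$ via \eqref{E:spherft}, finding $\widetilde{\mu}_D(\lambda) = \mu_D(\varphi_{-\lambda}) = (D\varphi_{-\lambda})(o)$. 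Since $\varphi_{-\lambda}$ is a joint eigenfunction of $\mathbb{D}(X)$ with $\varphi_{-\lambda}(o) = 1$, the Harish-Chandra isomorphism identifies this eigenvalue with $\Gamma(D)$ evaluated at $-\lambda$ (up to the usual sign convention), so $\widetilde{\mu}_D$ is a nonzero $W$-invariant polynomial on $\mathfrak{a}^*_\cc$; the same then holds for the spherical transform of $\mu_D^\vee$.

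Finally, I would invoke the standard fact --- noted in the paragraph preceding Proposition \ref{T:deltafcn} and essentially equivalent to the Malgrange--Ehrenpreis theorem on Euclidean space --- that every nonzero polynomial on $\mathbb{C}^l$ is slowly decreasing, so that Theorem \ref{T:surjectivity} applied to $\mu_D^\vee$ yields surjectivity of $D$ on $\mathcal{E}(X)$. The main obstacle is the bookkeeping that links $\mathbb{D}(X)$ with $K$-invariant distributions supported at $o$: one must verify that the differential action of $D$ really coincides with the convolution action in the precise sense required by Theorem \ref{T:surjectivity}, taking appropriate care with the reflection convention $\mu \leftrightarrow \mu^\vee$. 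Once this correspondence is in place, the Harish-Chandra eigenvalue identification and the slow-decrease estimate for polynomials render the remainder of the argument immediate.
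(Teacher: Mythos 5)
Your proposal is correct and follows essentially the same route as the paper: the paper writes $Df=f*D\delta_o$, notes $D\delta_o$ is $K$-invariant with spherical transform $\Gamma(D)(i\lambda)$, a polynomial and hence slowly decreasing, and applies Theorem \ref{T:surjectivity}. Your $\mu_D$ is just $D\delta_o$ (up to the reflection convention you flag), so apart from that bookkeeping the arguments coincide.
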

This is one of the main results in \cite{He1}.  Note that if $D\in\mathcal D(X)$, then $Df=f*D\delta_o$.  Now $D\delta_o\in\mathcal E'(X)$ is $K$-invariant, and
$$
(D\delta_o)^\sim(\lambda)=\Gamma(D)(i\lambda).
$$
Since the right hand side is a polynomial in $\lambda$, it is slowly decreasing, so Theorem \ref{T:surjectivity} applies.

\section{Mean Value Operators on Symmetric Spaces}

Fix a point $y\in X$.  The \emph{mean value operator} $M^y$ is defined on suitable functions $f$ on $X$ by
\begin{equation}\label{E:meanvalue-def}
M^yf(x)=\int_K f(gk\cdot y)\,dk\qquad(x=g\cdot o\in X),
\end{equation}
where $dk$ is the normalized Haar measure on $K$.
If $X$ is of rank one, then the translated orbit $gK\cdot y$ is the sphere in $X$ of  radius $d(o,y)$ (where $d$ denotes the distance in $X$)
and center $g\cdot o$, so the integral in \eqref{E:meanvalue-def} represents the average value of $f$ on this sphere.

Now choose any $g_0\in G$ such that $y=g_0\cdot o$.   Then in terms of the convolution on $X$, we have
\begin{equation}\label{E:MV-convolution}
M^yf=f*\chi_{K\cdot g_0^{-1}\cdot o}\qquad (f\in\mathcal E(X))
\end{equation}
where $\chi_{K\cdot g_0^{-1}\cdot o}\in\mathcal E'(X)$ is the distribution on $X$ given by
$$
\varphi\mapsto\int_K \varphi(kg_0^{-1}\cdot o)\,dk\qquad (\varphi\in\mathcal E(X)).
$$
This distribution is $K$-invariant and is clearly independent of the choice of $g_0$.


Note that for $h\in A$ 
we have
\begin{equation}\label{E:mv-spherft}
\left(\chi_{K\cdot h^{-1}\cdot o}\right)^\sim(\lambda)=\varphi_\lambda (h)\qquad (\lambda\in \a^*_\cc).
\end{equation}
Therefore by Theorem \ref{T:surjectivity}, we see that  
\begin{proposition}
\label{T:slowgrowthspher}
Let $h\in A$ be fixed. If 
the function $\lambda\mapsto \varphi_\lambda(h)$  is slowly decreasing on $\a_\cc^*$, then
$M^h\colon\mathcal E(X)\to \mathcal E(X)$ is surjective. 
\end{proposition}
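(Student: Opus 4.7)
The proof should be essentially immediate: the proposition is set up as a direct corollary of Theorem \ref{T:surjectivity} once the mean value operator has been rewritten as a convolution with a $K$-invariant compactly supported distribution whose spherical Fourier transform is identified.

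My plan is as follows. First, I would set $\mu = \chi_{K \cdot h^{-1}\cdot o}$, the $K$-orbit measure appearing in \eqref{E:MV-convolution}. This $\mu$ is a probability measure supported on the compact set $K\cdot h^{-1}\cdot o \subset X$, hence $\mu\in\mathcal E'(X)$, and it is manifestly $K$-invariant because the Haar measure $dk$ is bi-invariant on $K$. By \eqref{E:MV-convolution} we then have $M^h f = f * \mu = c_\mu(f)$ for all $f \in \mathcal E(X)$, so the surjectivity of $M^h$ is exactly the surjectivity of the convolution operator $c_\mu$.

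Next, I would identify the spherical Fourier transform of $\mu$. By the definition \eqref{E:spherft} of the spherical Fourier transform and the integral representation of $\chi_{K\cdot h^{-1}\cdot o}$,
\begin{equation*}
\widetilde\mu(\lambda) = \int_K \varphi_{-\lambda}(k h^{-1}\cdot o)\,dk = \varphi_{-\lambda}(h^{-1}) = \varphi_\lambda(h),
\end{equation*}
where the middle step uses the $K$-invariance of $\varphi_{-\lambda}$ together with $\varphi_\lambda$'s behavior under $\lambda \mapsto -\lambda$ and the involution — this is precisely the content of \eqref{E:mv-spherft}, which has already been recorded.

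Finally, the hypothesis of the proposition says that $\lambda\mapsto\varphi_\lambda(h)$ is slowly decreasing on $\mathfrak a^*_\cc$, so $\widetilde\mu$ is slowly decreasing. Since $\mu$ is $K$-invariant and its spherical transform is slowly decreasing, Theorem \ref{T:surjectivity} applies directly and gives that $c_\mu\colon\mathcal E(X)\to\mathcal E(X)$ is surjective, i.e.\ that $M^h$ is surjective. There is really no obstacle here; the proposition is a corollary. The only thing one should be careful about is the bookkeeping in the identification $\widetilde\mu(\lambda) = \varphi_\lambda(h)$, but this has been set up in advance in \eqref{E:mv-spherft}. The genuine mathematical content — namely the slow decrease criterion and the Paley–Wiener/range argument — has all been absorbed into Theorem \ref{T:surjectivity}.
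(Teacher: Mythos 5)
Your proposal is correct and follows exactly the paper's route: rewrite $M^h$ as convolution with the $K$-invariant orbital measure $\chi_{K\cdot h^{-1}\cdot o}$ via \eqref{E:MV-convolution}, identify its spherical transform as $\varphi_\lambda(h)$ via \eqref{E:mv-spherft}, and invoke Theorem \ref{T:surjectivity}. The paper treats the proposition as an immediate consequence of these same three ingredients, so there is nothing to add.
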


\section{The Case of Complex $G$}
When $G$ is complex, then $K$ is a compact real form of $G$ and $\mathfrak h=\a+i\a$ is a Cartan subalgebra of $\g$. Let $\Delta$ be the set of roots of $\g$ with respect to $\h$, let $\Delta^+$ be a fixed choice of positive roots, and let $\rho=\sum_{\alpha\in\Delta^+}\alpha$. Let $W$ be the Weyl group corresponding to $\Delta$.

Then for any $h\in A$, we have
\begin{equation}\label{E:complex-sphericalfcn}
\varphi_\lambda(h)=c\;\frac{\pi(\rho)}{\pi(i\lambda)}\;\frac{\sum_{s\in W} \det (s)\,e^{is\lambda(H)}}{\sum_{s\in W} \det (s)\, e^{s\rho(H)}},
\end{equation}
where $\pi(\lambda)=\prod_{\alpha\in\Delta^+}\alpha(\lambda)$. (See \cite{GGA}, Ch. IV, Theorem 5.7.)

Our objective is to show that, for fixed $h\in A$, the holomorphic function $\lambda\mapsto\varphi_\lambda(h)$ is slowly decreasing on $\a^*_\cc$. 

Now $h=\exp H$ for a unique $H\in\a$. We first consider the case when $H\in \a$ is \emph{regular}; that is to say, $\alpha(H)\neq 0$ for all $\alpha\in\Sigma$.  Then $sH\neq s'H$ for all $s\neq s'$ in $W$, and the denominator in \eqref{E:complex-sphericalfcn} does not vanish.  By the remark after Proposition
\ref{T:deltafcn}, the distribution on $\a$ given by $T=\sum_{s\in W} (\det s)\,\delta_{-sH}$ is invertible, so its Fourier transform
$$
T^*(\lambda)=\sum_{s\in W} \det s\,e^{is\lambda(H)}
$$
is slowly decreasing.  It is also divisible by the polynomial $\pi(\lambda)$ in the algebra $\mathcal H(\a^*_\cc)$.   Hence by Condition (iii) of Theorem \ref{T:invertible-dist}, the function 
$$
\varphi_\lambda(\exp H)=\frac{\pi(\rho)}{\sum_{s\in W}(\det s)\,e^{s\rho(H)}}\cdot \frac{T^*(\lambda)}{\pi(\lambda)}
$$
is slowly decreasing.  Theorem \ref{T:surjectivity} now implies that if $\mu$ is the $K$-invariant distribution $\chi_{K\cdot h^{-1}\cdot o}$ on $X$, then $c_\mu : \mathcal E(X)\to\mathcal E(X)$ is surjective.

Suppose now that $H$ is not regular. The function $\lambda\mapsto\varphi_\lambda(h)$ on $\mathfrak a^*_{\mathbb C}$ is of course still holomorphic of exponential type, but the formula \eqref{E:complex-sphericalfcn} for $\varphi_\lambda(h)$ needs to modified since in the present case the ``Weyl denominator'' $\sum_{s\in W} \det(s)\,e^{s\rho(H)}$ equals $0$.

This Weyl denominator can also be written
\begin{equation}\label{E:Weyl-denom}
\sum_{s\in W} \det(s)\,e^{s\rho(H)}=\prod_{\alpha\in{\Delta^+}} \left(e^{\alpha(H)}-e^{-\alpha(H)}\right).
\end{equation}

(See Lemma 24.3 in  \cite{Humphreys1978}.) Now let $\Delta_0$ denote the root system $\{\alpha\in\Delta\,|\,\alpha(H)=0\}$, and let $\Delta_0^+=\Delta_0\cap\Delta^+$. The Weyl group of $\Delta_0$ is the subgroup $W_0$ of $W$ consisting of all elements which leave $H$ fixed, and is generated by the reflections along the root hyperplanes $\pi_\alpha=\alpha^\perp$, where $\alpha\in\Delta_0$ (or even the \emph{simple} root hyperplanes in $\Delta_0$.) Let $\rho_0=(1/2)
\sum_{\alpha\in\Delta_0^+}\alpha$.  The Weyl denominator corresponding to $\Delta_0$ is
\begin{equation}\label{E:W-denom}
\sum_{s\in W_0} \det (s)\,e^{s\rho_0 (H)}=\prod_{\alpha\in\Delta_0^+}\left(e^{\alpha (H)}-e^{-\alpha(H)}\right).
\end{equation}

Let $\pi_0$ denote the polynomial on $\a^*_\cc$ given by  $\pi_0(\lambda)=\prod_{\alpha\in \Delta_0^+}\alpha(\lambda)$ and let $|W_0|$ denote the order of $W_0$. To obtain an explicit expression for $\varphi_\lambda(\exp H)$, we first calculate $\varphi_\lambda(\exp(H+tH_{\rho_0}))$, and note that for small positive $t$, the vector $H+tH_{\rho_0}$ is regular.   Then the fraction on the right hand side of \eqref{E:complex-sphericalfcn} is
\begin{multline*}
\frac{\sum_{s\in W} \det (s)e^{is\lambda(H+tH_{\rho_0})}}{\prod_{\alpha\in\Delta^+}\left(e^{\alpha(H+tH_{\rho_0})}-e^{-\alpha(H+tH_{\rho_0})}\right)}\\=\frac 1{|W_0|}\,
\frac{\sum_{\sigma\in W_0}\sum_{s\in W} \det (\sigma^{-1} s)e^{i\sigma^{-1} s\lambda(H+tH_{\rho_0})}}{\prod_{\alpha\in\Delta^+}\left(e^{\alpha(H+tH_{\rho_0})}-e^{-\alpha(H+tH_{\rho_0})}\right)}
\end{multline*}
The right hand side above can be written
$$
\frac 1{|W_0|}\,\frac{\sum_{s\in W} \det (s)e^{is\lambda(H)}\sum_{\sigma\in W_0}\det (\sigma) e^{is\lambda(t\sigma H_{\rho_0})}}
{\prod_{\alpha\in\Delta^+\setminus\Delta_0^+}\left(e^{\alpha(H+tH_{\rho_0})}-e^{-\alpha(H+tH_{\rho_0})}\right)\prod_{\alpha\in\Delta_0^+}\left(e^{\alpha(t H_{\rho_0})}-e^{-\alpha(tH_{\rho_0})}\right) },
$$
which by \eqref{E:W-denom} equals
$$
\frac 1{|W_0|}\,\frac{\sum_{s\in W} \det (s)e^{is\lambda(H)}\prod_{\alpha\in \Delta_0^+}\left(e^{\alpha(it\,s\lambda)}-e^{-\alpha(it\,s\lambda)}\right)}
{\prod_{\alpha\in\Delta^+\setminus\Delta_0^+}\left(e^{\alpha(H+tH_{\rho_0})}-e^{-\alpha(H+tH_{\rho_0})}\right)\prod_{\alpha\in\Delta_0^+}\left(e^{\alpha(t H_{\rho_0})}-e^{-\alpha(tH_{\rho_0})}\right) },
$$
Taking the limit as $t\to 0$, we obtain
\begin{equation}\label{E:complex-sphericalfcn-nonreg}
\varphi_\lambda(\exp H)=\frac{\pi(\rho)}{|W_0|\,\pi(i\lambda)\,\pi_0(\rho_0)}\,
\frac{ \sum_{s\in W} \det (s)\,\pi_0(is\lambda)\,e^{is\lambda(H)}}
{\prod_{\alpha\in\Delta^+\setminus\Delta_0^+}\left(e^{\alpha(H)}-e^{-\alpha(H)}\right)}
\end{equation}
The exponential polynomial $\psi(\lambda=\sum_{s\in W} \det (s)\,\pi_0(is\lambda)\,e^{is\lambda(H)}$ is skew in $\lambda$, meaning that $\psi(\sigma\lambda)=\det\sigma\,\psi(\lambda)$ for all $\sigma\in W$. This makes $\psi(\lambda)$ divisible by $\pi(i\lambda)$, so the right hand side of \eqref{E:complex-sphericalfcn-nonreg} represents a holomorphic function of $\lambda$.

\begin{theorem}\label{T:cpx-surjectivity}
Let $X=G/K$, with $G$ complex.  For any $h\in A$, the mean value operator
$$
M^h\colon \mathcal E(X)\to\mathcal E(X)
$$
is surjective. 
\end{theorem}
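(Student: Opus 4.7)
The plan is to apply Proposition~\ref{T:slowgrowthspher}, which reduces the surjectivity of $M^h$ to showing that for $h=\exp H\in A$, the function $\lambda\mapsto\varphi_\lambda(h)$ is slowly decreasing on $\a^*_\cc$. The regular case was already carried out in the discussion preceding the theorem, so only the non-regular case remains. For this, I would use the explicit formula \eqref{E:complex-sphericalfcn-nonreg}, which expresses $\varphi_\lambda(\exp H)$ up to a nonzero constant as $\psi(\lambda)/\pi(i\lambda)$, where
\[
\psi(\lambda)\;=\;\sum_{s\in W}\det(s)\,\pi_0(is\lambda)\,e^{is\lambda(H)}.
\]
This quotient is entire on $\a^*_\cc$: $\psi$ is $W$-skew and hence divisible by the $W$-skew polynomial $\pi(i\lambda)$.

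Next, I would show that $\psi$ is slowly decreasing by exhibiting it as the Fourier transform of a distribution covered by Theorem~\ref{T:delaydiffinv}. The $|W|$ exponents $s\lambda(H)=\lambda(s^{-1}H)$ are \emph{not} in general distinct---each point of the orbit $W\cdot H$ is hit $|W_0|$ times, since $W_0$ stabilizes $H$---so one first groups the sum over $W$ by cosets $W_0\backslash W$. Using the identity $\pi_0(\sigma\mu)=\det(\sigma)\pi_0(\mu)$ for $\sigma\in W_0$ (which holds because $W_0$ permutes $\Delta_0$), the inner sum over each coset collapses cleanly and one obtains
\[
\psi(\lambda)\;=\;|W_0|\sum_{s_0\in W_0\backslash W}\det(s_0)\,\pi_0(is_0\lambda)\,e^{is_0\lambda(H)},
\]
which is the Fourier transform of a distribution $\sum_{s_0}c_{s_0}\,q_{s_0}(\partial)\,\delta_{-s_0^{-1}H}$ supported at the \emph{distinct} points $-s_0^{-1}H$, with $q_{s_0}(\lambda)=\pi_0(s_0\lambda)$ polynomials. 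Theorem~\ref{T:delaydiffinv} then gives that this distribution is invertible, and hence $\psi$ is slowly decreasing.

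Finally, I would conclude slow decrease of the quotient $\psi(\lambda)/\pi(i\lambda)$ by the same elementary observation implicit in the regular case: since $|\pi(i\zeta)|$ has polynomial upper bounds, on each log-ball around $\xi\in\a^*$ the lower bound on $|\psi|$ furnished by slow decrease divides through to give a polynomial lower bound on $|\psi/\pi(i\,\cdot\,)|$, verifying the flexible form \eqref{E:invertibility2}. Proposition~\ref{T:slowgrowthspher} then completes the proof. The main obstacle is the coset-collapse step: one must verify carefully that the $W_0$-redundancy in the exponents is precisely cancelled by the sign-transformation of $\pi_0$ under $W_0$, so the resulting sum over coset representatives is well-defined and has the form required by Theorem~\ref{T:delaydiffinv}.
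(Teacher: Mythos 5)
Your proposal is correct and follows essentially the same route as the paper: the regular case as in the discussion preceding the theorem, and in the non-regular case the formula \eqref{E:complex-sphericalfcn-nonreg} combined with the invertibility result of Theorem \ref{T:delaydiffinv} for the numerator, division by the polynomial $\pi(i\lambda)$, and finally Proposition \ref{T:slowgrowthspher}. Your grouping of the sum over cosets in $W_0\backslash W$ (using $\pi_0(\sigma\mu)=\det(\sigma)\pi_0(\mu)$ for $\sigma\in W_0$) correctly supplies the distinctness of the support points required by Theorem \ref{T:delaydiffinv}, a detail the paper leaves implicit.
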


\section{The Case of a General Noncompact Symmetric Space}

We return to the case of a general noncompact symmetric space $X=G/K$. 
 For $M\geq 0$ let $\a^+_M$ be the subchamber $\{H\in\a\,:\,\alpha(H)>M\text{ for all }\alpha\in\Sigma^+\}$, and let $A^+_M=\exp\a^+_M$.  Our aim in this subsection is to prove the following result.
\begin{theorem}\label{T:sym-surj}
There exists a constant $M>0$ such that the mean value operator $M^h\colon\mathcal E(X)\to\mathcal E(X)$ is surjective for all $h\in A^+_M$.
\end{theorem}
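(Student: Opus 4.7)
The plan is to apply Proposition~\ref{T:slowgrowthspher}: it suffices to show that for $h = \exp H$ with $H$ in a sufficiently deep subchamber $\mathfrak{a}^+_M$, the entire function $\lambda \mapsto \varphi_\lambda(\exp H)$ is slowly decreasing on $\mathfrak{a}^*_{\mathbb C}$. The strategy mirrors the complex case: isolate a finite exponential-polynomial principal part whose slow decrease follows from Theorem~\ref{T:delaydiffinv}, then choose $M$ large enough that a remainder term is exponentially suppressed and cannot destroy the slow-decrease bound on the principal part.

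First, I would invoke the Harish-Chandra (Gangolli) series on the positive Weyl chamber,
$$
\varphi_\lambda(\exp H) = \sum_{s \in W} \mathbf{c}(s\lambda)\,\Phi_{s\lambda}(H), \qquad \Phi_\lambda(H) = e^{(i\lambda-\rho)(H)} \sum_{\mu \in \Lambda_+} \Gamma_\mu(\lambda)\,e^{-\mu(H)},
$$
with $\Gamma_0 \equiv 1$, absolutely convergent for $H$ in a fixed subchamber $\mathfrak{a}^+_{M_0}$. Using the Gindikin-Karpelevi\v c product for $\mathbf{c}$, choose a polynomial $p(\lambda)$ (a product of suitable linear root forms) so that $q(\lambda) := p(\lambda)\mathbf{c}(\lambda)$ is entire and polynomially bounded along the real subspace with moderate exponential growth on imaginary directions. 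Multiplying by $p(\lambda)e^{\rho(H)}$ and separating the $\mu = 0$ term gives
$$
p(\lambda)\,e^{\rho(H)}\,\varphi_\lambda(\exp H) = P(\lambda,H) + R(\lambda,H),
$$
where $P(\lambda,H) = \sum_{s \in W} q(s\lambda)\,e^{is\lambda(H)}$ is an exponential polynomial and $R$ collects the $\mu \in \Lambda_+ \setminus\{0\}$ terms, each carrying a factor $e^{-\mu(H)}$. Since $\{sH : s \in W\}$ are distinct for regular $H$, Theorem~\ref{T:delaydiffinv} applied to the distribution $\sum_{s \in W} q(\partial)\,\delta_{-sH}$ on $\mathfrak{a}$ shows that $P(\lambda,H)$ is slowly decreasing: for each $\xi \in \mathfrak{a}^*$ there exists $\zeta = \xi + i\eta$ with $\|\eta\| \leq A\log(2+\|\xi\|)$ and $|P(\zeta,H)| \geq B(C+\|\xi\|)^{-D}$, with $A,B,C,D$ stable as $H$ varies over $\mathfrak{a}^+_M$.

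Next I would estimate the remainder using the Gangolli-type bounds on $\sum_{\mu \neq 0} |\Gamma_\mu(\lambda)|\,e^{-\mu(H)}$ (classical, see \cite{GGA}, Ch.~IV). Combined with polynomial growth of $p(\lambda)\mathbf{c}(\lambda)$, these yield
$$
|R(\lambda,H)| \leq C_1\,(1+\|\lambda\|)^{N_1}\,e^{R_1\|\mathrm{Im}\,\lambda\|}\,e^{-\delta M}
$$
for some $\delta > 0$ depending only on $\Sigma$. Choosing $M$ large (depending on the constants $A,B,C,D,C_1,N_1,R_1,\delta$) forces $|R(\zeta,H)| \leq \tfrac{1}{2}B(C+\|\xi\|)^{-D}$ at the shifted point $\zeta$ of the previous paragraph. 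Therefore $p(\lambda)e^{\rho(H)}\varphi_\lambda(\exp H)$ is slowly decreasing, and since dividing an entire slowly decreasing function by a polynomial preserves slow decrease (an immediate consequence of the definition, as $|p(\zeta)|$ is controlled polynomially on the shifted region), $\varphi_\lambda(\exp H)$ itself is slowly decreasing. Proposition~\ref{T:slowgrowthspher} then yields the surjectivity of $M^h$ for $h \in A^+_M$.

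The principal obstacle is the uniform control of $R(\lambda,H)$. The coefficients $\Gamma_\mu(\lambda)$ are rational functions of $\lambda$ with poles on root hyperplanes; these poles cancel in the Weyl sum and in products with $p(\lambda)$, but obtaining a clean upper bound on $R$ that is uniform in $\lambda \in \mathfrak{a}^*_{\mathbb C}$ (in particular along the shifted points $\zeta$ produced by the slow-decrease argument) requires careful handling of the combined Weyl-sum cancellations and the summability of the Gangolli series. Once this uniform estimate is in place, the choice of $M$ is elementary and the remaining pieces fit together exactly as in the complex case.
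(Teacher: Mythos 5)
Your overall skeleton (Harish--Chandra/Gangolli expansion, isolate a leading part, push $H$ deep into the chamber so the rest is exponentially suppressed) is the same as the paper's, but the step you use to handle the leading part would fail. You propose to choose a polynomial $p$ so that $q=p\,\mathbf c$ is entire and then apply Theorem~\ref{T:delaydiffinv} to $\sum_{s\in W}q(\partial)\delta_{-sH}$. By the Gindikin--Karpelevi\v c formula \eqref{E:c-function}, $c(\lambda)$ has poles along the \emph{infinite} family of hyperplanes $i\langle\lambda,\alpha_0\rangle=-m$, $m\in\mathbb Z^+$, $\alpha\in\Sigma_0^+$, so no polynomial multiple of $c$ is entire; and even if you could produce an entire $q$, it would not be a polynomial, so $P(\lambda,H)=\sum_{s}q(s\lambda)e^{is\lambda(H)}$ is not the Fourier transform of a distribution of the form covered by Theorem~\ref{T:delaydiffinv}, and the Ehrenpreis invertibility result gives you nothing. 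This reduction works only when $c$ is a rational function of $\lambda$, i.e.\ exactly in the complex case ($c(\lambda)=\mathrm{const}\cdot\pi(\rho)/\pi(i\lambda)$), which is why the paper treats that case separately and uses a different mechanism for general $X$.

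The second gap is the one you flag yourself: your remainder bound must hold at the witness points $\zeta$ produced by the slow-decrease argument, whose imaginary parts grow like $\log(2+\|\xi\|)$, and there the Gangolli estimates for $\Gamma_\mu(\lambda)$ are not available uniformly (the recursion denominators $\langle\mu,\mu\rangle-2i\langle\mu,\lambda\rangle$ vanish on complex hyperplanes). The paper avoids both problems at once by never letting the imaginary part move: it fixes a single $\eta\in\mathfrak a^*_+$ subject to conditions (a)--(e) and tests slow decrease only at $\lambda=\xi-i\eta$, which is legitimate for \eqref{E:invertibility3} since $\|\eta\|$ is a fixed bound on $\|\zeta-\xi\|$. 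Condition (b) ($\|\eta\|<\tfrac14\|\mu\|$) keeps the recursion denominators bounded below uniformly in $\xi$, giving the uniform coefficient bound of Lemma~\ref{T:gamma-estimate}; conditions (c)--(e) together with the Gamma-ratio asymptotics give the two-sided polynomial bounds $|c(s(\xi-i\eta))|\asymp\prod_{\alpha\in\Sigma_0^+}(1+|\langle\xi,\alpha_0\rangle|)^{-(m_\alpha+m_{2\alpha})/2}$ of Lemma~\ref{T:c-fcn-estimate}, so the $s=e$, $\mu=0$ term has a genuine polynomial lower bound rather than an appeal to invertibility of an exponential polynomial; and then for $H\in\mathfrak a^+_M$ every competing term carries a factor $e^{-M\,m(\mu)}$ or $e^{-M\,m(\eta-s\eta)}$, so a single large $M$ makes the leading term dominate and Theorem~\ref{T:surjectivity} applies. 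Without an argument replacing your Theorem~\ref{T:delaydiffinv} step and a uniform treatment of $R$ on the shifted points, your proposal does not yet constitute a proof.
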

We believe that the theorem will be true for all $h$ in $A$, but we are presently not aware of a proof.

By Theorem \ref{T:surjectivity} we will need to prove that a constant $M$ can be found such that for any $h\in A^+_M$ the holomorphic function $\lambda\mapsto\varphi_\lambda(h)$ on $\a^*_\cc$ is slowly decreasing.  The key tool is Harish-Chandra's spherical function expansion
\begin{equation}\label{E:HC-expansion}
\varphi_\lambda(\exp H)=\sum_{s\in W} c(s\lambda) \Phi_{s\lambda}(H),
\end{equation}
where $\Phi_\lambda(H)$ is the \emph{Harish-Chandra series}
\begin{equation}\label{E:HC-series}
\Phi_\lambda(H)=\sum_{\mu\in\Lambda} \Gamma_\mu(\lambda)\,e^{(is\lambda-\rho-\mu)(H)}.
\end{equation}
(See, for example, \cite{GGA}, Chapter IV, for a derivation and treatment.)  In \eqref{E:HC-series} the coefficients $\Gamma_\mu(\lambda),\,\mu\in\Lambda$ are defined by the recursion formula
\begin{align}\label{E:gamma-recursion}
&\Gamma_0(\lambda)=1\\
&(\langle\mu,\mu\rangle-2i\,\langle\mu,\lambda\rangle)\Gamma_\mu(\lambda)\nonumber\\
&\quad=2\sum_{\alpha\in\Sigma^+} m_\alpha\sum_{\substack{k>1,\\ \mu-2k\alpha\in\Lambda^+}}
(\langle\mu+\rho-2k\alpha,\alpha\rangle-i\langle\lambda,\alpha\rangle)\Gamma_{\mu-2k\alpha}(\lambda)\nonumber
\end{align}
and in the expansion \eqref{E:HC-expansion}, $c(\lambda)$ is Harish-Chandra's $c$ function
\begin{multline}\label{E:c-function}
c(\lambda)\\=c_0\,\prod_{\alpha\in\Sigma_0^+}\frac{2^{-i\langle\lambda,\alpha_0\rangle}\,\Gamma(i\langle\lambda,\alpha_0\rangle)}
{\Gamma\left(\frac 12\left(\frac{m_\alpha}2+m_{2\alpha}+\langle i\lambda,\alpha_0\rangle\right)\right)
\Gamma\left(\frac 12\left(\frac{m_\alpha}2+1+\langle i\lambda,\alpha_0\rangle\right)\right)},
\end{multline}
with $\alpha_0=\alpha/\langle\alpha,\alpha\rangle$ and
$$
c_0=\Gamma((1/2)(m_\alpha+m_{2\alpha}+1))2^{(1/2)m_\alpha+m_{2\alpha}}.
$$
The $c$ function is a meromorphic function on $\a^*_\cc$ with poles in the hyperplanes $i\langle\lambda,\alpha_0\rangle=-m$, for all $\alpha\in\Sigma_0^+$ and $m\in\zz^+$.

The equality \eqref{E:HC-series} holds (and the series $\Phi_{s\lambda}(H)$ converge for all $s\in W$) when $H\in\a^+$ and 
$\lambda\in\a^*_\cc$ satisfy $\langle\mu,\mu\rangle-2i\langle\mu,s\lambda\rangle\neq 0$ for all $\mu\in\Lambda\setminus\{0\}$ and $i(s\lambda-s'\lambda)\notin
\widetilde\Lambda$ for all $s\neq s'$ in  $W$.

In particular, the Harish-Chandra series \eqref{E:HC-series} converges for all $\lambda\in\a^*$.  While the expansion \eqref{E:HC-series} is employed mostly to study the spherical function $\varphi_\lambda(\exp H)$ as a function of $H$ (for fixed $\lambda$), for the purpose of proving Theorem \ref{T:sym-surj} we would like to examine its behavior as $\lambda$ varies, with $H$ fixed, while maintaining the prescribed limitations on $\lambda$.     

Explicitly, in view of the slow decrease criterion \eqref{E:invertibility2} we would like to use the Harish-Chandra expansion to 
show that there exists a  constant $M\geq 0$ such that for all $H\in\a^+_M$, there are positive constants $A,B,C$, and $D$ (depending on $H$) for which
\begin{equation}\label{E:invertibility3}
\sup\{|\varphi_\lambda(\exp H)|\,\colon\,\lambda\in\a^*_\cc,\;\|\lambda-\xi\|\leq A\log(2+\|\xi\|)\}\geq B(C+\|\xi\|)^{-D}
\end{equation}
for all $\xi\in\a^*$.  

We start with a technical lemma which gives an estimate for the coefficients 
$\Gamma_\mu(\lambda)$ when the imaginary part of $\lambda$ is bounded by a given fixed constant.

\begin{lemma}\label{T:gamma-estimate} Suppose that $\eta\in\a^*$ satisfies $\|\eta\|<(1/4)\|\mu\|$ for all $\mu\in\Lambda\setminus \{0\}$.
For any vector $H_0\in\a^+$, there is a constant $K_{H_0}$ such that
\begin{equation}\label{E:gamma-est}
|\Gamma_\mu(\xi+i\eta)|\leq K_{H_0}\,e^{\mu(H_0)}
\end{equation}
for all $\mu\in\Lambda$ and all $\xi\in\a^*$.
\end{lemma}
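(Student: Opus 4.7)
The plan is to proceed by strong induction on the height $\mathrm{ht}(\mu)=\sum_j n_j$ of $\mu=\sum_j n_j\alpha_j\in\Lambda^+$, with the base case $\mu=0$ trivial since $\Gamma_0(\lambda)\equiv 1$.

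The key lower bound on the denominator in the recursion \eqref{E:gamma-recursion} follows from the hypothesis on $\eta$. Writing $\lambda=\xi+i\eta$, one has
$$
|\langle\mu,\mu\rangle-2i\langle\mu,\lambda\rangle|\geq\mathrm{Re}\bigl(\|\mu\|^2+2\langle\mu,\eta\rangle-2i\langle\mu,\xi\rangle\bigr)\geq\|\mu\|^2-2\|\mu\|\,\|\eta\|\geq\tfrac{1}{2}\|\mu\|^2,
$$
a bound uniform in $\xi$. Applying the inductive hypothesis $|\Gamma_{\mu-2k\alpha}(\lambda)|\leq K_{H_0}e^{(\mu-2k\alpha)(H_0)}$ to each term on the right side of \eqref{E:gamma-recursion}, and using a polynomial bound on $|\langle\mu+\rho-2k\alpha,\alpha\rangle-i\langle\lambda,\alpha\rangle|$, yields
$$
|\Gamma_\mu(\lambda)|\leq K_{H_0}e^{\mu(H_0)}\cdot\frac{C}{\|\mu\|^2}\sum_{\alpha,k}m_\alpha\,\bigl|\langle\mu+\rho-2k\alpha,\alpha\rangle-i\langle\lambda,\alpha\rangle\bigr|\,e^{-2k\alpha(H_0)}.
$$
Since $H_0\in\a^+$ forces $\alpha(H_0)>0$, the geometric decay $e^{-2k\alpha(H_0)}$ ensures absolute convergence of the double sum.

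The induction then reduces to showing that the multiplier of $K_{H_0}e^{\mu(H_0)}$ is at most $1$ for $\mu$ of sufficiently large height, with $K_{H_0}$ chosen large enough at the outset to absorb the finitely many small-height exceptions. The main obstacle is the $\|\xi\|$-linear growth in the numerator factor $-i\langle\lambda,\alpha\rangle$, which the crude denominator bound $\|\mu\|^2/2$ does not on its own control. To handle this, one sharpens the denominator estimate to $|\langle\mu,\mu\rangle-2i\langle\mu,\lambda\rangle|\geq c(\|\mu\|^2+|\langle\mu,\xi\rangle|)$ and decomposes $\langle\lambda,\alpha\rangle$ into its component parallel to $\mu$ (absorbed by the improved denominator) and its transverse component (controlled via the constraint $\mu-2k\alpha\in\Lambda^+$ and the combinatorics of the root system). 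Combined with the geometric factor $e^{-2k\alpha(H_0)}$, this yields the desired $\xi$-uniform bound and closes the induction.
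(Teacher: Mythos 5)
Your setup is fine up to the point you yourself flag as the main obstacle, but the proposed fix for that obstacle does not work, and this is exactly where the proof of the lemma has to be different. In the recursion \eqref{E:gamma-recursion} the numerator contains $-i\langle\lambda,\alpha\rangle$, which for $\lambda=\xi+i\eta$ grows like $\|\xi\|$, while your inductive hypothesis gives only a $\xi$-uniform bound $K_{H_0}e^{(\mu-2k\alpha)(H_0)}$ on $\Gamma_{\mu-2k\alpha}$, with no decay in $\xi$. Your sharpened denominator bound $|\langle\mu,\mu\rangle-2i\langle\mu,\lambda\rangle|\geq c\,(\|\mu\|^2+|\langle\mu,\xi\rangle|)$ is correct, and it does absorb the component of $\langle\xi,\alpha\rangle$ along $\mu$; but the transverse component is genuinely uncontrolled. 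The constraint $\mu-2k\alpha\in\Lambda^+$ does not force $\alpha$ to be (even approximately) parallel to $\mu$: already in rank two take $\mu=2\alpha_1+2\alpha_2$, $\alpha=\alpha_1$, $k=1$, and $\xi$ orthogonal to $\mu$ but not to $\alpha_1$. Then the denominator is $\asymp\|\mu\|^2$, independent of $\xi$, while that single term on the right-hand side is $\asymp\|\xi\|\,K_{H_0}e^{(\mu-2\alpha_1)(H_0)}$, so the ``multiplier of $K_{H_0}e^{\mu(H_0)}$'' tends to infinity with $\|\xi\|$ and the induction cannot close. No appeal to ``the combinatorics of the root system'' repairs this within your inductive hypothesis; to run a direct induction on \eqref{E:gamma-recursion} you would need a refined hypothesis recording decay of $\Gamma_\nu(\xi+i\eta)$ in $\|\xi\|$, which you neither formulate nor prove. (In rank one your argument can be salvaged, since there every $\alpha$ is proportional to $\mu$, but the lemma is needed in arbitrary rank.)

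The paper sidesteps the $\lambda$-dependent numerator entirely: it passes to Gangolli's modification $\Psi_\lambda=\delta^{1/2}\Phi_\lambda$ in \eqref{E:gangolli}, whose recursion \eqref{E:gangolli-recurrence} has coefficients $d_\nu$ that are independent of $\lambda$ and of at most polynomial growth in $\|\nu\|$. With the same denominator estimate \eqref{E:gamma-estimate1} (which your condition on $\eta$ gives, with the harmless factor $1/2$), the induction you envisage then does close, because the factor multiplying $K_{H_0}e^{\mu(H_0)}$ is $\|\mu\|^{-2}\sum_\nu |d_\nu|e^{-\nu(H_0)}$, uniform in $\xi$ and $\leq 1$ for $\|\mu\|$ large; the finitely many remaining $\mu$ are absorbed into the constant. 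One then recovers the bound for $\Gamma_\mu$ from $\Gamma_\mu(\lambda)=\sum_\nu c_\nu A_{\mu-\nu}(\lambda)$, where the $c_\nu$ come from the expansion of $\delta^{-1/2}$ and also grow only polynomially. So the missing idea in your proposal is precisely this change of unknown from $\Gamma_\mu$ to the Gangolli coefficients $A_\mu$, which is what makes the estimate uniform in $\xi$.
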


Since  $\Lambda\setminus\{0\}$ has no accumulation point, the set of all such $\eta$ is a nonempty open ball in $\a^*$.

\begin{proof}
For any $\xi\in\a^*$ and any $\mu\in\Lambda\setminus \{0\}$, our condition for $\eta$ implies that
\begin{align}
|\langle\mu,\mu\rangle-2i\,\langle\mu,\xi+i\eta\rangle|&=|\langle\mu,\mu\rangle+2\,\langle\mu,\eta\rangle-2i\langle\mu,\xi\rangle|\nonumber\\
&\geq \langle\mu,\mu\rangle-2\|\mu\|\,\|\eta\|\nonumber\\
&> \frac 12\langle\mu,\mu\rangle.\label{E:gamma-estimate1}
\end{align}
Hence $\Gamma_\mu(\xi+i\eta)$ is well-defined for each $\mu\in\Lambda$.  

Now recall that in \cite{Gang} the \emph{radial density function} on $\a^+$ is given by
$$
\delta(H)=
\prod_{\alpha\in\Sigma^+} (e^{\alpha(H)}-e^{-\alpha(H)})^{m_\alpha}
$$
We then have series expansions
\begin{align*}
\delta^{1/2}(H)&=e^{\rho(H)}\,\sum_{\nu\in\Lambda} b_\nu e^{-\nu(H)},\\
\delta^{-1/2}(H)&=e^{-\rho(H)}\,\sum_{\nu\in\Lambda} c_\nu e^{-\nu(H)},\\
\delta^{-1/2}(H)\,L_\a(\delta^{1/2})(H)&=\sum_{\nu\in\Lambda} d_\nu e^{-\nu(H)},
\end{align*}
for $H\in\a^+$, where the coefficients $b_\nu,\,c_\nu$, and $d_\nu$ all grow polynomially in $\|\nu\|$, and $d_0=\langle\rho,\rho\rangle$.

Gangolli's modification of the Harish-Chandra series is given by
\begin{align}\label{E:gangolli}
\Psi_\lambda(H)&=\delta^{1/2}(H)\,\Phi_\lambda(H)\nonumber\\
&=\sum_{\mu\in\Lambda} A_\mu(\lambda) e^{(i\lambda-\mu)(H)}
\end{align}
The coefficients $A_\mu(\lambda)$ satisfy the recurrence relation
\begin{equation}\label{E:gangolli-recurrence}
(\langle\mu,\mu\rangle-2i\langle\mu,\lambda\rangle)\,A_\mu(\lambda)=\sum_{\substack{\nu\in\Lambda,\,\nu>0\\
\mu-\nu\in\Lambda}} A_{\mu-\nu}(\lambda)\,d_\nu
\end{equation}
Now in Gangolli's paper \cite{Gang}, the inequality \eqref{E:gamma-estimate1} (without the factor $1/2$) was used for $\lambda\in\a^*+i\a^*_+$ to prove that there exists a constant $C_{H_0}$ such that $|A_\mu(\lambda)|\leq C_H\,e^{\mu(H_0)}$ for all $\mu\in\Lambda$; the relation
$$
\Gamma_\mu(\lambda)=\sum_{\substack{\nu\in\Lambda\\
\mu-\nu\in\Lambda}} c_\nu A_{\mu-\nu}(\lambda)
$$
then implies that there exists a constant $D_{H_0}$ such that
$$
|\Gamma_\mu(\lambda)|\leq D_{H_0}\,e^{\mu(H_0)}
$$
for all $\lambda\in\a^*+i\a^*_+$ and all $\mu\in\Lambda$.

Because the inequality \eqref{E:gamma-estimate1} also holds for $\lambda=\xi+i\eta$, the very same proof shows that there is a constant $K_{H_0}$ satisfying the inequality \eqref{E:gamma-est}.  This finishes the proof of the lemma.
\end{proof}

Our aim is to find a lower bound for the supremum in \eqref{E:invertibility3}. This estimate is
accomplished using $\lambda = \xi - i\eta$ for a fixed $\eta\in \mathfrak{a}^*_+$. This restricts the range of the parameter $\lambda$,
but it is sufficient for our purposes.
From now on we will fix an element $\eta\in\a^*$ satisfying the five conditions below. Condition (b) is assumed to hold  for all $\mu\in\Lambda\setminus\{0\}$, and Conditions (c)--(e) are assumed to hold for all $s\in W$, and all $\alpha\in\Sigma_0^+$:
\begin{enumerate}
\item[(a)] $\eta\in \mathfrak a^*_+$;
\item[(b)] $\|\eta\|<(1/4)\,\|\mu\|$;
\item[(c)] $\langle s\eta,\alpha_0\rangle\notin -\mathbb Z^+$;
\item[(d)] $\langle s\eta,\alpha_0\rangle+m_\alpha/2+m_{2\alpha}\notin -2\mathbb Z^+$;
\item[(e)] $\langle s\eta,\alpha_0\rangle+m_\alpha/2+1\notin -2\mathbb Z^+$.
\end{enumerate}
Since $\a^*_+$ is an open cone in $\a^*$ with vertex at $0$, the set of all $\eta$ satisying (a) and (b) is a nonempty open subset of $\a^*$. Conditions (c)--(e) 
are needed in order to apply Stirling's formula for the Gamma function, and they
stipulate that $\eta$  does not belong to a countable set of hyperplanes in $\a^*$. Their union is a set of measure zero in $\a^*$, so there will be elements $\eta\in\a^*$ satisfying (a)--(e).


\begin{lemma}\label{T:c-fcn-estimate}
Let $\eta$ be the fixed element of $\a^*$ chosen above, and let $s\in W$. Then
\begin{equation}\label{E:c-fcn-asymptotics}
|c(s(\xi-i\eta))|\asymp \prod_{\alpha\in\Sigma_0^+}(1+|\langle\xi,\alpha_0\rangle|)^{-(m_\alpha+m_{2\alpha})/2}
\end{equation}
for all $\xi\in \a^*$.
\end{lemma}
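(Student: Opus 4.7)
The approach is a direct calculation using Harish-Chandra's explicit product formula \eqref{E:c-function} for $c(\lambda)$ together with Stirling's asymptotic formula for the Gamma function. Each factor in the product is indexed by $\alpha\in\Sigma_0^+$, so I will fix one $\alpha$ and one $s\in W$, substitute $\lambda = s(\xi-i\eta)$, and analyze the $\alpha$-factor as a function of the real variable $t_\alpha := \langle s\xi,\alpha_0\rangle$. Writing $\langle i s(\xi-i\eta),\alpha_0\rangle = \langle s\eta,\alpha_0\rangle + i t_\alpha$, the modulus of the prefactor $2^{-i\langle\lambda,\alpha_0\rangle}$ equals $2^{-\langle s\eta,\alpha_0\rangle}$, which is a positive constant independent of $\xi$.

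Next I apply Stirling's estimate $|\Gamma(x+iy)|\sim\sqrt{2\pi}\,|y|^{x-1/2}\,e^{-\pi|y|/2}$ as $|y|\to\infty$ to each of the three Gamma factors. The numerator contributes $|\Gamma(\langle s\eta,\alpha_0\rangle + it_\alpha)|\sim C\,|t_\alpha|^{\langle s\eta,\alpha_0\rangle-1/2}\,e^{-\pi|t_\alpha|/2}$, while each denominator Gamma, whose imaginary part is $t_\alpha/2$, contributes an exponential $e^{-\pi|t_\alpha|/4}$. The two $e^{-\pi|t_\alpha|/4}$ factors in the denominator combine to exactly cancel the $e^{-\pi|t_\alpha|/2}$ in the numerator. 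A short bookkeeping of the power exponents shows that the powers of $|t_\alpha|$ involving $\langle s\eta,\alpha_0\rangle$ also cancel, leaving exactly $|t_\alpha|^{-(m_\alpha+m_{2\alpha})/2}$ up to a positive multiplicative constant depending only on $\eta$ and $\alpha$. Thus for large $|t_\alpha|$ the $\alpha$-factor of $|c(s(\xi-i\eta))|$ is comparable to $|t_\alpha|^{-(m_\alpha+m_{2\alpha})/2}$.

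To obtain a uniform estimate, including small values of $|t_\alpha|$, I invoke conditions (c)--(e) on $\eta$: they guarantee that none of the three Gamma functions in the $\alpha$-factor develop a pole along the line $\lambda = s(\xi-i\eta)$, since each potential pole would force the real part of a Gamma argument into a nonpositive integer. Combined with the nonvanishing of $\Gamma$, this makes the $\alpha$-factor a continuous, strictly positive function of $t_\alpha\in\rr$, which is therefore bounded above and below by positive constants on any compact $t_\alpha$-interval. Patching the large-$|t_\alpha|$ Stirling estimate with this compact-interval boundedness yields the two-sided comparison $\asymp (1+|t_\alpha|)^{-(m_\alpha+m_{2\alpha})/2}$, with constants depending on $\eta$ and $s$ but not on $\xi$.

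Finally, taking the product over $\alpha\in\Sigma_0^+$ gives $|c(s(\xi-i\eta))|\asymp\prod_{\alpha\in\Sigma_0^+}(1+|\langle s\xi,\alpha_0\rangle|)^{-(m_\alpha+m_{2\alpha})/2}$. To reach the stated form I substitute $\beta=s^{-1}\alpha$: as $\alpha$ ranges over $\Sigma_0^+$, $\beta$ ranges over $s^{-1}\Sigma_0^+\subset \Sigma_0$, and I use that $\langle s\xi,\alpha_0\rangle=\langle\xi,(s^{-1}\alpha)_0\rangle$, that $(-\beta)_0=-\beta_0$ and $m_{-\beta}=m_\beta$, and the Weyl-invariance $m_{s\beta}=m_\beta$, to rewrite the product as one over $\Sigma_0^+$ in the variable $\langle\xi,\alpha_0\rangle$. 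The main thing to watch carefully is the exponent bookkeeping in step two — once the cancellation $e^{-\pi|t_\alpha|/4}\cdot e^{-\pi|t_\alpha|/4}=e^{-\pi|t_\alpha|/2}$ is verified and the residual power is computed, the rest is routine.
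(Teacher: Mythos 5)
Your proof is correct, and it follows the same overall strategy as the paper: substitute $\lambda=s(\xi-i\eta)$ into the Gindikin--Karpelevich product formula \eqref{E:c-function}, use Conditions (c)--(e) to rule out poles (and the nonvanishing of $\Gamma$ to rule out zeros) so that each factor is continuous and positive in $t_\alpha=\langle s\xi,\alpha_0\rangle$ on compact sets, obtain the power decay from Gamma asymptotics for large $|t_\alpha|$, and finally convert $\prod_\alpha(1+|\langle s\xi,\alpha_0\rangle|)^{-(m_\alpha+m_{2\alpha})/2}$ into $\prod_\alpha(1+|\langle\xi,\alpha_0\rangle|)^{-(m_\alpha+m_{2\alpha})/2}$ using that $s^{-1}$ permutes $\Sigma_0$ up to sign and that root multiplicities are $W$-invariant (you spell out this last step more explicitly than the paper does). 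The one genuine difference is the choice of Gamma asymptotic: the paper first applies the Legendre duplication formula $\Gamma(2z)=\pi^{-1/2}2^{2z-1}\Gamma(z)\Gamma(z+1/2)$ so that each factor of $c$ becomes a product of two ratios $\Gamma(z)/\Gamma(z+b)$ with $b>0$ real and common argument $z$, and then quotes the ratio asymptotic $\Gamma(z)/\Gamma(z+b)\approx z^{-b}$; this sidesteps any exponential bookkeeping, since the decay $e^{-\pi|\mathrm{Im}\,z|/2}$ cancels within each ratio automatically. You instead apply Stirling's modulus formula $|\Gamma(x+iy)|\sim\sqrt{2\pi}\,|y|^{x-1/2}e^{-\pi|y|/2}$ to each of the three Gammas in the original formula \eqref{E:c-function}, where the numerator has imaginary part $t_\alpha$ and each denominator Gamma has imaginary part $t_\alpha/2$, so you must check the cancellation $e^{-\pi|t_\alpha|/4}\cdot e^{-\pi|t_\alpha|/4}=e^{-\pi|t_\alpha|/2}$ and the exponent sum by hand; your bookkeeping is right: the numerator exponent is $\langle s\eta,\alpha_0\rangle-\tfrac12$, the denominator exponents are $\tfrac{m_\alpha}4+\tfrac{m_{2\alpha}}2+\tfrac{\langle s\eta,\alpha_0\rangle}2-\tfrac12$ and $\tfrac{m_\alpha}4+\tfrac{\langle s\eta,\alpha_0\rangle}2$, and the difference is exactly $-(m_\alpha+m_{2\alpha})/2$, with the $\eta$-dependent constants absorbed into the comparison constants. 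Both routes are equally rigorous; the paper's duplication trick buys a cleaner one-line asymptotic per factor, while your direct Stirling computation is more self-contained and makes the source of the polynomial decay transparent.
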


The symbol $\asymp$ in  \eqref{E:c-fcn-asymptotics} means that there are positive constants $r_1$ and $r_2$ (which depend on $s$) such that
\begin{multline}
r_1\,\prod_{\alpha\in\Sigma_0^+}(1+|\langle\xi,\alpha_0\rangle|)^{-(m_\alpha+m_{2\alpha})/2}\\
\leq |c(s(\xi-i\eta))|\leq r_2\,\prod_{\alpha\in\Sigma_0^+}(1+|\langle\xi,\alpha_0\rangle|)^{-(m_\alpha+m_{2\alpha})/2}
\end{multline}
for all $\xi\in\a^*$.

\begin{proof}
Using the  identity $\Gamma(2z)=\pi^{-1/2}2^{2z-1}\,\Gamma(z)\Gamma(z+1/2)$ the $c$ function formula \eqref{E:c-function} becomes
$$
c(\lambda)=c_1\,\prod_{\alpha\in\Sigma_0^+} \frac{\Gamma\left(\frac{i\langle \lambda,\alpha_0\rangle}2\right)
\Gamma\left(\frac{i\langle \lambda,\alpha_0\rangle}2+\frac 12\right)}
{\Gamma\left(\frac{m_\alpha}4+\frac{m_{2\alpha}}2+\frac{\langle i\lambda,\alpha_0\rangle}2\right)
\Gamma\left(\frac{m_\alpha}4+\frac 12+\frac{\langle i\lambda,\alpha_0\rangle}2\right)}
$$
for some positive constant $c_1$.  Then putting $\lambda=\xi-i\eta$, we obtain
\begin{multline}\label{E:c-fcn-resolution}
c(s(\xi-i\eta))\\=c_1\,\prod_{\alpha\in\Sigma_0^+} \frac{\Gamma\left(\frac{\langle s\eta,\alpha_0\rangle+i\langle s\xi,\alpha_0\rangle}2\right)
\Gamma\left(\frac{\langle s\eta,\alpha_0\rangle+i\langle s\xi,\alpha_0\rangle}2+\frac 12\right)}
{\Gamma\left(\frac{m_\alpha}4+\frac{m_{2\alpha}}2+\frac{\langle s\eta,\alpha_0\rangle+i\langle s\xi,\alpha_0\rangle}2\right)
\Gamma\left(\frac{m_\alpha}4+\frac 12+\frac{\langle s\eta,\alpha_0\rangle+i\langle s\xi,\alpha_0\rangle}2\right)}
\end{multline}

Conditions ((c)--(e) for $\eta$ ensure that all the Gamma functions on the right hand side of \eqref{E:c-fcn-resolution}  are well-defined for all $\xi\in\a^*$ and all $s\in W$. Since the Gamma function has no zeros, for our fixed $\eta$ we also see that the right hand side above never vanishes for all $s\in W$ and all $\xi\in\a^*$.

Now we use the following asymptotic formula for the ratio of two Gamma functions
\begin{equation}\label{E:gamma-ratio}
\frac{\Gamma(z)}{\Gamma(z+b)}\approx z^{-b}\,\left(1+O(1/z)\right)\text{ as }|z|\to\infty,
\end{equation}
for $b>0$, which is valid as long as $-\pi+\delta<\text{Arg}\,z<\pi-\delta$, for small positive $\delta$.  (See, for example,  Formula 6.1.47 in \cite{AS}.)

For each $\xi\in\a^*$, the real part of the argument in each of the Gamma functions on the right hand side of \eqref{E:c-fcn-resolution} is constant (and not an integer $\leq 0$), so in particular the asymptotic formula \eqref{E:gamma-ratio} implies that
$$
\left|
\frac{\Gamma\left(\frac{\langle s\eta,\alpha_0\rangle+i\langle s\xi,\alpha_0\rangle}2\right)}{\Gamma\left(\frac{m_\alpha}4+\frac{m_{2\alpha}}2+\frac{\langle s\eta,\alpha_0\rangle+i\langle s\xi,\alpha_0\rangle}2\right)}\right|
\asymp \left(1+|\langle s\xi,\alpha_0\rangle|\right)^{-\frac{m_\alpha}4+\frac{m_{2\alpha}}2}
$$
and
$$
\left|\frac{\Gamma\left(\frac{\langle s\eta,\alpha_0\rangle+i\langle s\xi,\alpha_0\rangle}2+\frac 12\right)}
{\Gamma\left(\frac{m_\alpha}4+\frac 12+\frac{\langle s\eta,\alpha_0\rangle+i\langle s\xi,\alpha_0\rangle}2\right)}\right|
\asymp \left(1+|\langle s\xi,\alpha_0\rangle|\right)^{-\frac{m_\alpha}4}
$$
for any $\alpha\in\Sigma_0^+$. Hence \eqref{E:c-fcn-resolution} implies that
\begin{align*}
|c(s(\xi-i\eta))|&\asymp\prod_{\alpha\in\Sigma_0^+} \left(1+|\langle \xi,s^{-1}\alpha_0\rangle|\right)^{-\frac{m_\alpha+m_{2\alpha}}2}\\
&=\prod_{\alpha\in\Sigma_0^+} \left(1+|\langle \xi,\alpha_0\rangle|\right)^{-\frac{m_\alpha+m_{2\alpha}}2},
\end{align*}
proving the asymptotic relation \eqref{E:c-fcn-asymptotics}.
\end{proof}

Let us now prove Theorem \ref{T:sym-surj}. Again we recall that we have fixed the element $\eta\in\a^*$ satisfying Conditions (a)--(e) above. According to Lemma \ref{T:c-fcn-estimate}, there are constants $m_1$ and $m_2$
such that
$$
|c(\xi-i\eta)|\geq m_1\,\prod_{\alpha\in\Sigma_0^+} (1+|\langle\xi,\alpha_0\rangle|)^{-\frac{m_\alpha+m_{2\alpha}}2}
$$
for all $\xi\in\a^*$, and
$$
|c(s(\xi-i\eta))|\leq m_2\,\prod_{\alpha\in\Sigma_0^+} (1+|\langle\xi,\alpha_0\rangle|)^{-\frac{m_\alpha+m_{2\alpha}}2}
$$
for all $\xi\in\a^*$ and all $s$ in $W$.

For the moment let us fix a vector $H_0\in\a^+$. Since $\eta\in\a^*$ satisfies Condition (a), Lemma \ref{T:gamma-estimate} implies that there is a positive constant $K_{H_0}$ for which 
$$
\left|\Gamma_\mu(s(\xi-i\eta))\right|\leq K_{H_0}\,e^{\mu(H_0)}
$$
for all $\mu\in\Lambda$, all $\xi\in\a^*$, and all $s\in W$. Let $M_1$ be any number larger than $|\alpha(H_0)|$ for all $\alpha\in\Sigma_0^+$. Then by \eqref{E:HC-series}, \eqref{E:c-fcn-asymptotics}, and the estimate above, the sum \eqref{E:HC-expansion} representing $\varphi_{\xi-i\eta}(H)$ converges uniformly on the region $(\xi,H)\in \a^*\times\a_{M_1}$.

In particular, for $(\xi,H)\in\a^*\times \a_{M_1}$, we have
\begin{align}
&e^{\rho(H)}\,|\varphi_{\xi-i\eta}(\exp H)|\nonumber\\
&=\left|\sum_{s\in W}c(s(\xi-i\eta)) \sum_{\mu\in\Lambda} \Gamma_\mu(s(\xi-i\eta)) 
e^{s\eta(H)+is\xi(H)-\mu(H)}\right|\nonumber\\
&\geq |c(\xi-i\eta)|\,e^{\eta(H)}-|c(\xi-i\eta)|\sum_{\mu\in\Lambda\setminus\{0\}}|\Gamma_\mu(\xi-i\eta)|\,e^{\eta(H)-\mu(H)}\nonumber\\
&\qquad\qquad-\sum_{s\neq e} |c(s(\xi-i\eta))|\,
\sum_{\mu\in\Lambda}|\Gamma_\mu(s(\xi-i\eta))| e^{s\eta(H)-\mu(H)}\nonumber\\
&\geq e^{\eta(H)}\,\prod_{\alpha\in\Sigma_0^+} (1+|\langle\xi,\alpha_0\rangle|)^{-\frac{m_\alpha+m_{2\alpha}}2}\times\label{E:spher-fcn-estimate}\\
&\qquad\left(m_1- m_2\sum_{\mu\in\Lambda\setminus\{0\}} K_{H_0} e^{\mu(H_0)-\mu(H)} - m_2\sum_{s\neq e} e^{s\eta(H)-\eta(H)}
\sum_{\mu\in\Lambda} K_{H_0}e^{\mu(H_0)-\mu(H)}\right)\nonumber
\end{align}



Let $\alpha_1,\ldots,\alpha_l$ be the simple roots in $\a^*_+$, and let $H_1,\ldots,H_l$ be a dual basis of $\a$. If $H=\sum_{j=1}^l k_jH_j\in\a$, then $H\in\a^+$ if and only if each $k_j>0$ and for any $M>0$,  $H\in\a_M$ if and only if each $k_j>M$.  

Let $^+\a^*$ be the dual cone $\{\lambda\in\a^*\,:\,\lambda(H)>0\text{ for all }H\in\a^+\}$. Then $\lambda\in\; ^+\a^*$ if and only if  $\lambda$ is nonzero and $\lambda=\sum_{j=1}^l m_j\alpha_j$, where each $m_j\geq 0$.
For $\lambda\in\;^+\a^*$ we put $m(\lambda)=\sum_{j=1}^l m_j$.   Since $\eta\in\a^*_+$, we have $\eta-s\eta\in\,  ^+\a^*$ for all $s\neq e$ in $W$. (See, for instance, \cite{DS}, Ch. VII, Theorem 2.12.)

Let $M>M_1$. If $H\in\a_M^+$, then  $\mu(H)>M\,m(\mu)$ and $\eta(H)-s\eta(H)>M\,m(\eta-s\eta)$. Thus the relation
 \eqref{E:spher-fcn-estimate} implies that
\begin{multline}\label{E:2nd-spher-fcn-est}
e^{\rho(H)}\,|\varphi_{\xi-i\eta}(\exp H)|
\geq e^{\eta(H)}\,\prod_{\alpha\in\Sigma_0^+} (1+|\langle\xi,\alpha_0\rangle|)^{-\frac{m_\alpha+m_{2\alpha}}2}\times\\
\left(m_1-m_2K_{H_0}e^{\mu(H_0)}\left(\sum_{\mu\in\Lambda\setminus\{0\}} e^{-M\,m(\mu)}
+\sum_{s\neq e} e^{-M\,m(\eta-s\eta)}\sum_{\mu\in\Lambda} e^{-M\,m(\mu)} \right)\right)
\end{multline}
Since both $\sum_{\mu\in\Lambda\setminus\{0\}} e^{-M\,m(\mu)}$ and  $\sum_{s\neq e} e^{-M\,m(\eta-s\eta)}$ tend to $0$ as $M\to\infty$, the expression
\begin{equation}\label{E:spher-lower-bd}
m_1-m_2K_{H_0}e^{\mu(H_0)}\left(\sum_{\mu\in\Lambda\setminus\{0\}} e^{-M\,m(\mu)}
+\sum_{s\neq e} e^{-M\,m(\eta-s\eta)}\sum_{\mu\in\Lambda} e^{-M\,m(\mu)} \right)
\end{equation}
is positive for all sufficiently large $M$. Choose one such $M$, and denote the expression \eqref{E:spher-lower-bd} by $C_M$. Then relation
\eqref{E:2nd-spher-fcn-est} gives
\begin{align*}
e^{(-\eta+\rho)(H)}\,|\varphi_{\xi-i\eta}(\exp H)|&\geq C_M\,\prod_{\alpha\in\Sigma_0^+} (1+|\langle\xi,\alpha_0\rangle|)^{-\frac{m_\alpha+m_{2\alpha}}2}\\
&\geq C'C_M\,(1+\|\xi\|)^{-\frac{\dim N}2}
\end{align*}
for all $H\in\a_M$ and all $\xi\in\a^*$, where $C'$ is a constant that depends only on $\Sigma$. Since $\|\eta\|$ is fixed, this clearly implies the slow decrease condition \eqref{E:invertibility3} for each $H\in\a_M$, and this in turn proves Theorem \ref{T:sym-surj}.

\section{Surjectivity in the rank one case}

In this section we assume that $X=G/K$ is of rank one.
The purpose of this section is to show that $\lambda\mapsto \varphi_\lambda(h)$ is
slowly decreasing, thereby proving the surjectivity of mean value operators according
to Proposition~\ref{T:slowgrowthspher}.

Let $\alpha$ and $2 \alpha$ be the positive roots
and let $p= m_{\alpha}$, $q= m_{2 \alpha}$,
respectively. In addition let $n= \dim G/K$.
Here we note that in this case $n= p+q+1$.
We define a norm $|| \cdot ||$ on $\a$ by
$$
|| X || := \left( - \frac{1}{2(p+4q)} B(X, \theta X) \right)^{\frac{1}{2} }.
$$
Here $B(\cdot, \cdot)$ and $\theta$ denote the Killing form and the Cartan involution,
respectively.
We take $H \in \a$
such that $\alpha (H) = 1$.

Next, we identify $\a^*$ with $\rr$
and denote by $\varphi_{\lambda}$ the zonal spherical function on $G/K$
corresponding to $\lambda \in \a^* \cong \rr$.
Then we have the following.

\begin{theorem}[Koornwinder \cite{Ko}]\label{Koornwinder-theorem}
Fix $t>0$ and let $h=\mathrm{Exp}(tH)$. Then
\be\label{Koornwinder-Formula}
\begin{aligned}
{}& 
\frac{ \Gamma (\frac{n-1}{2}) \Gamma (\frac{1}{2})  }{ 2^{ \frac{n-1}{2} } \Gamma (\frac{n}{2} ) }
\; (\sinh t)^{n-2} (\cosh t)^{\frac{q}{2} }  \;
\varphi_{\lambda} ( h ) \\
{}& = \int_0^t \; \cos (\lambda s) \;
     ( \cosh t - \cosh s )^{ \frac{n-3}{2} }  \;
        {}_2 F_1 
      \left( 
 1- \frac{q}{2}, \; \frac{q}{2} \; ;  \; \frac{n-1}{2}  \;  ;
 \frac{ \cosh t - \cosh s }{ 2\cosh t }
     \right) \, ds.
\end{aligned}
\ee
\end{theorem}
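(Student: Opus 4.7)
My plan is to identify the zonal spherical function with a Jacobi function and then deduce the integral representation from the Euler integral for ${}_2F_1$ together with a geodesic change of variables. By the standard rank one reduction of the Harish-Chandra integral (see \cite{GGA}, Ch. IV, Thm. 5.8), with $\alpha(H) = 1$ one has
$$\varphi_\lambda(\exp tH) = {}_2F_1\!\left(\tfrac{1}{2}(\rho + i\lambda),\, \tfrac{1}{2}(\rho - i\lambda);\, \tfrac{n}{2};\, -\sinh^2 t\right),$$
with $\rho = p/2 + q$. This matches Koornwinder's Jacobi function $\phi_\lambda^{(\alpha,\beta)}(t)$ for the parameters $\alpha = (n-2)/2$ and $\beta = (q-1)/2$. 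Both sides of \eqref{Koornwinder-Formula} are entire and even in $\lambda$, so it suffices to prove the identity at this level of generality.

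The core computation starts from the Euler integral
$${}_2F_1(a,b;c;w) = \frac{\Gamma(c)}{\Gamma(b)\Gamma(c-b)}\int_0^1 u^{b-1}(1-u)^{c-b-1}(1-uw)^{-a}\,du$$
applied with $w = -\sinh^2 t$ and with the two parameter choices $a = \tfrac{1}{2}(\rho \pm i\lambda)$, followed by the substitution $\sinh^2(s/2) = u\sinh^2(t/2)$. Under this substitution, the identity $\cosh t - \cosh s = 2(\sinh^2(t/2) - \sinh^2(s/2))$ converts the Beta-type measure $u^{b-1}(1-u)^{c-b-1}\,du$ into a multiple of $(\cosh t - \cosh s)^{(n-3)/2}\,ds$, and the binomial factor $(1 - uw)^{-a}$, after symmetrizing over the two values of $a$, collapses to $\cos(\lambda s)$ up to a residual power of $\cosh(s/2)/\cosh(t/2)$. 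When $q > 0$, this leftover power is reorganized, via a further Euler integral in the parameter $\beta = (q-1)/2$, as the stated inner hypergeometric factor ${}_2F_1(1 - q/2,\, q/2;\, (n-1)/2;\, (\cosh t - \cosh s)/(2\cosh t))$. For $q = 0$ the inner ${}_2F_1$ degenerates to $1$ and one recovers the classical Mehler-Dirichlet representation on real hyperbolic space.

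The main technical obstacle is the simultaneous bookkeeping: one must orchestrate the two substitutions so that both the correct kernel and the normalization constant emerge together. The constant $\Gamma((n-1)/2)\Gamma(1/2)/(2^{(n-1)/2}\Gamma(n/2))$ arises from applying the Legendre duplication formula $\Gamma(2z) = \pi^{-1/2}\,2^{2z-1}\Gamma(z)\Gamma(z+1/2)$ to $\Gamma(n/2)$, which produces simultaneously the factor $2^{(n-1)/2}$ in the denominator and the factors $\Gamma((n-1)/2)\Gamma(1/2)$ in the numerator. As an independent verification, one can check that both sides of \eqref{Koornwinder-Formula} satisfy the same Jacobi-type differential equation
$$\left(\frac{d^2}{dt^2} + (p\coth t + 2q\coth 2t)\frac{d}{dt} + \lambda^2 + \rho^2\right)\varphi_\lambda = 0,$$
agree to leading order as $t \to 0^+$, and share the same regularity at the origin; uniqueness of the regular solution of the Jacobi equation at $t = 0$ then forces equality for all $t > 0$.
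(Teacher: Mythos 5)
Your reduction to Jacobi functions is fine ($\varphi_\lambda(\exp tH)={}_2F_1\bigl(\tfrac12(\rho+i\lambda),\tfrac12(\rho-i\lambda);\tfrac n2;-\sinh^2t\bigr)$ with $\alpha=(n-2)/2$, $\beta=(q-1)/2$), but the core computation does not work as described. In the Euler integral the measure exponent is one of $a,b=\tfrac12(\rho\pm i\lambda)$, so it is necessarily $\lambda$-dependent: the $\lambda$-dependence of the integrand sits in \emph{all three} factors $u^{b-1}(1-u)^{c-b-1}(1-uw)^{-a}$ and equals $\bigl(\tfrac{1-u}{u(1+u\sinh^2t)}\bigr)^{i\lambda/2}$. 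Under your substitution $\sinh^2(s/2)=u\sinh^2(t/2)$ this becomes $\bigl(\tfrac{\cosh t-\cosh s}{(\cosh s-1)(\cosh s\cosh t+\cosh s-\cosh t)}\bigr)^{i\lambda/2}$, which is not $e^{i\lambda s}$ (it blows up like $s^{-i\lambda}$ as $s\to0^+$), so symmetrizing in $a\leftrightarrow b$ produces a cosine of $\tfrac\lambda2\log(\cdot)$, not $\cos(\lambda s)$; likewise the exponent $c-b-1$ is $\lambda$-dependent, so the Beta measure cannot turn into the $\lambda$-free kernel $(\cosh t-\cosh s)^{(n-3)/2}\,ds$. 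Finally, the Euler prefactor $\Gamma(c)/(\Gamma(b)\Gamma(c-b))$ depends on $\lambda$, while the constant in \eqref{Koornwinder-Formula} is $\lambda$-free, and nothing in your manipulations accounts for cancelling these Gamma factors. So the claimed ``collapse to $\cos(\lambda s)$ up to a power of $\cosh(s/2)/\cosh(t/2)$'' is false, and the subsequent ``further Euler integral in $\beta$'' has nothing correct to act on.

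The fallback verification is also not a proof as it stands: the displayed equation is the one satisfied by $\varphi_\lambda$, not by either side of \eqref{Koornwinder-Formula} (which carry the factor $(\sinh t)^{n-2}(\cosh t)^{q/2}$, so one must conjugate the operator), and the essential point --- that the integral on the right satisfies the conjugated Jacobi equation in $t$, with differentiation under the integral justified at the endpoint singularity $(\cosh t-\cosh s)^{(n-3)/2}$ at $s=t$ (exponent $-\tfrac12$ when $n=2$) --- is precisely the nontrivial content and is only asserted. Note that the paper itself does not prove this theorem; it quotes Koornwinder \cite{Ko} (see also Rouvi\`ere \cite{Rouv}). The known proofs are genuinely different from your sketch: one realizes $\phi^{(\alpha,\beta)}_\lambda$ as the image of $\cos(\lambda s)=\phi^{(-1/2,-1/2)}_\lambda(s)$ under the composition of two explicit fractional-integral (transmutation) operators, one shifting $\alpha$ with kernel a power of $\cosh t-\cosh s$ and one shifting $\beta$ with kernel a power of $\cosh 2t-\cosh 2s$; the Euler/Beta evaluation enters only when composing the two kernels, and that is where the inner ${}_2F_1\bigl(\tfrac12-\beta,\tfrac12+\beta;\alpha+\tfrac12;\cdot\bigr)$ and the $\lambda$-free constant (via duplication) actually arise. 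Even for $q=0$ the classical Mehler--Dirichlet argument starts from the Harish-Chandra integral $\int_0^\pi(\cosh t+\sinh t\cos\phi)^{i\lambda-\rho}\sin^{p-1}\phi\,d\phi$ with the substitution $e^{s}=\cosh t+\sinh t\cos\phi$, not from the Euler integral in $u$. To salvage your attempt, either carry out the transmutation-kernel composition, or make the ODE route rigorous (conjugation, differentiation under the integral near $s=t$, and matching of the $t^{n-2}$ coefficients as $t\to0^+$).
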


\begin{remark} For the details of the above theorem, see also Rouvi\`ere \cite{Rouv} p.113.
\end{remark}

\medskip

We fix $t >0$ and put
\be\label{finite-Fourier-integral-1}
I (\lambda) :=
\int_0^t \; \cos (\lambda s) \;
     ( \cosh t - \cosh s )^{ \frac{n-3}{2} }  \;
        {}_2 F_1 
      \left( 
 1- \frac{q}{2}, \; \frac{q}{2} \; ;  \; \frac{n-1}{2}  \;  ;
 \frac{ \cosh t - \cosh s }{ 2\cosh t }
     \right) \, ds.
\ee
We write the holomorphic extension of $I (\lambda)$ 
as $I (\zeta), \; (\zeta \in \mathbb{C})$.
As a direct consequence of Theorem \ref{Koornwinder-theorem},
it suffices to show that the entire function $\mathbb{C} \ni \zeta \mapsto I (\zeta)$
is slowly decreasing.
We will need to split up in two cases, namely when $n$ is either odd or even.

%
%
%
%

\subsection{The case when $n$ is odd}
First, we consider the case when $n$ is an odd number.
We put $n-3 = 2 \ell$ ($\ell \in \zz, \; \ell \geq 0$).

In this case, we note that 
$p= 2 \ell +2, \; q= 0$ and that the hypergeometric function appearing in the integrand
of $I (\lambda)$ is just a constant function. More precisely, we have
$$
       {}_2 F_1 
      \left( 
 1- \frac{q}{2}, \; \frac{q}{2} \; ;  \; \frac{n-1}{2}  \;  ;
 \frac{ \cosh t - \cosh s }{ 2\cosh t }  
      \right)  =1.
$$
For a nonnegative integer $m$, let
\be
\mathcal{I}_m (\lambda) := 
   \int_{-t}^t \; \cos (\lambda s) \;
      ( \cosh t - \cosh s )^m \; ds.   
\ee
By the above, obviously $I (\lambda) = \frac{1}{2}\mathcal{I}_{\ell} (\lambda)$.
So our objective in this subsection is to compute $\mathcal{I}_m (\lambda)$.

Let us put
$$
f_m (s) = ( \cosh t - \cosh s )^m.
$$
Then we see easily that $f_m$ satisfies
\be\label{recurrence-relation-f-m}
f_m^{ '' } (s) = m^2 f_m (s) 
                    - m(2m-1) \cosh t   f_{m-1} (s)
                        +m(m-1) \sinh^2 t f_{m-2} (s),
\quad (m \geq 2).
\ee
By integration by parts and applying \eqref{recurrence-relation-f-m}, we get
\begin{equation*}
\begin{aligned}
\mathcal{I}_m (\lambda) 
  & \equiv  \int_{-t}^t \; \cos (\lambda s) \, f_m (s)  \; ds \\
  & = - \frac{1}{\lambda^2} \int_{-t}^t \; \cos (\lambda s) \, f_m^{ '' } (s)  \; ds \\
  & = - \frac{1}{\lambda^2} \int_{-t}^t \; \cos (\lambda s) \, \times \\
  &  \qquad \qquad
       \{ 
            m^2 f_m (s) 
               - m(2m-1) \cosh t   f_{m-1} (s)
                        +m(m-1) \sinh^2 t f_{m-2} (s)
          \} \, ds \\
  & = - \frac{m^2}{\lambda^2} \mathcal{I}_m (\lambda)
        - \frac{m(2m-1)}{\lambda^2} \cosh t \, \mathcal{I}_{m-1} (\lambda)
         + \frac{m(m-1)}{\lambda^2} \sinh^2 t \, \mathcal{I}_{m-2} (\lambda).
\end{aligned}
\end{equation*}
Therefore, we obtain the recurrence formula
\be\label{recurrence-relation-I-m}
\mathcal{I}_m (\lambda) 
  = \frac{1}{ \lambda^2 + m^2 } 
       \left\{
        -m(2m-1) \cosh t \, \mathcal{I}_{m-1} (\lambda) 
          + m(m-1) \sinh^2 t \, \mathcal{I}_{m-2} (\lambda)
             \right\}.
\ee
On the other hand, by direct computation, we have
\be\label{I-0-and-I-1}
\begin{aligned}
\mathcal{I}_0 (\lambda) 
  &=  \frac{ 2 \sin (\lambda t) }{\lambda}, \\
\mathcal{I}_1 (\lambda) 
  &= - \frac{ 2 \sinh t }{ \lambda^2 +1 } \cos (\lambda t)
     + \frac{ 2 \cosh t }{  (\lambda^2 +1 ) } \,
             \frac{ \sin (\lambda t) }{ \lambda }.
\end{aligned}
\ee
Combining \eqref{recurrence-relation-I-m} and \eqref{I-0-and-I-1},
we have

\begin{theorem}\label{expression-I-m-odd-case}
There exist rational functions $P_m (\lambda)$ and $Q_m (\lambda )$
of $\lambda$ such that
$$
\mathcal{I}_m (\lambda) 
= P_m (\lambda) \frac{\sin (\lambda t) }{ \lambda } 
   + Q_m (\lambda ) \cos (\lambda t).
$$
Moreover, the above $P_m (\lambda)$ and $Q_m (\lambda )$
are real valued and smooth on $\rr$.
\end{theorem}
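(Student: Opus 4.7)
The plan is to prove Theorem \ref{expression-I-m-odd-case} by induction on $m$, using the recurrence relation \eqref{recurrence-relation-I-m} as the inductive step and the explicit formulas \eqref{I-0-and-I-1} as the base cases.

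For the base cases $m=0$ and $m=1$, the formulas in \eqref{I-0-and-I-1} already present $\mathcal{I}_0(\lambda)$ and $\mathcal{I}_1(\lambda)$ in the desired form. Reading off, one may take $P_0(\lambda)=2$, $Q_0(\lambda)=0$, and $P_1(\lambda)=\tfrac{2\cosh t}{\lambda^2+1}$, $Q_1(\lambda)=-\tfrac{2\sinh t}{\lambda^2+1}$. These are manifestly real-valued rational functions of $\lambda$, and since $\lambda^2+1>0$ on $\mathbb{R}$, they are smooth on $\mathbb{R}$.

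For the inductive step, assume the claim for $m-2$ and $m-1$, so that $\mathcal{I}_{m-1}(\lambda)=P_{m-1}(\lambda)\tfrac{\sin(\lambda t)}{\lambda}+Q_{m-1}(\lambda)\cos(\lambda t)$ and similarly for $\mathcal{I}_{m-2}$, with all four coefficients real rational functions smooth on $\mathbb{R}$. Substituting into \eqref{recurrence-relation-I-m} and collecting the $\tfrac{\sin(\lambda t)}{\lambda}$ and $\cos(\lambda t)$ pieces (the two functions $\tfrac{\sin(\lambda t)}{\lambda}$ and $\cos(\lambda t)$ do not mix under the recurrence since the coefficients in \eqref{recurrence-relation-I-m} depend only on $\lambda^2$), we obtain
\begin{align*}
P_m(\lambda) &= \frac{-m(2m-1)\cosh t\,P_{m-1}(\lambda)+m(m-1)\sinh^2 t\,P_{m-2}(\lambda)}{\lambda^2+m^2},\\
Q_m(\lambda) &= \frac{-m(2m-1)\cosh t\,Q_{m-1}(\lambda)+m(m-1)\sinh^2 t\,Q_{m-2}(\lambda)}{\lambda^2+m^2}.
\end{align*}
These are real rational functions of $\lambda$, completing the form assertion.

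The only potential obstacle is the smoothness claim on $\mathbb{R}$, i.e., that no real pole is introduced. But the new denominator $\lambda^2+m^2$ has no real zero for $m\ge 1$, and by the induction hypothesis $P_{m-1},Q_{m-1},P_{m-2},Q_{m-2}$ have no real poles. Hence $P_m,Q_m$ are smooth on all of $\mathbb{R}$, which closes the induction and proves the theorem.
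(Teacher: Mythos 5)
Your proof is correct and follows essentially the same route as the paper, which obtains the theorem precisely by combining the recurrence \eqref{recurrence-relation-I-m} with the base cases \eqref{I-0-and-I-1}; you have merely made the implicit induction explicit. The explicit formulas for $P_m$ and $Q_m$ and the observation that the denominators $\lambda^2+m^2$ have no real zeros are exactly the details the paper leaves to the reader.
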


We are now in a position to prove 
that
$\mathcal{I}_m (\lambda)$ is slowly decreasing.

By Theorem \ref{expression-I-m-odd-case},
$S_m (\lambda):= (P_m (\lambda)/ \lambda )^2 \, + Q_m (\lambda )^2$ 
is a rational function
which is smooth and non-negative on $(0, \infty)$.
$P_m$ and $Q_m$ may have a finite number of common zeros.
So if we take sufficiently large $\xi_0 (>0)$, then
$S_m (\lambda) >0$ for $\lambda > \xi_0$.
As a result, we have 
$$
S_m (\lambda)  \geq  \frac{A}{ (1+ | \lambda | )^k }, 
\qquad \text{for} \; \lambda \geq \xi_0,
$$
for some constants $A>0$ and $k$.

For $\xi \in \rr$, we define two subsets $U_{\xi}$ and $V_{\xi}$ of $\cc$ as follows.
\begin{equation*}
\begin{aligned}
U_{\xi} &:= \{ \zeta \in \cc ; || \zeta - \xi || < A \log (2 + | \xi | ) \}, \\
V_{\xi} &:= U_{\xi} \cap \rr.
\end{aligned}
\end{equation*}
Obviously, $U_{\xi} \supset V_{\xi}$.
If necessary, we take the above $\xi_0$ such that
\begin{equation*}
A \log (2 + | 2 \xi_0 | ) \geq \frac{ \pi }{ t }, \quad
2 \xi_0 - A \log (2 + | 2 \xi_0 | ) \geq \xi_0.
\end{equation*}
Then
\begin{equation*}
\begin{aligned}
\sup_{\zeta \in U_{\xi} } \, | \mathcal{I}_m (\zeta)  |
 & \geq \sup_{\lambda \in V_{\xi} } \, | \mathcal{I}_m (\lambda)  | \\
 & = \sup_{\lambda \in V_{\xi} } \,
    \left| \, 
     P_m (\lambda) \frac{\sin (\lambda t) }{ \lambda } 
   + Q_m (\lambda ) \cos (\lambda t)
    \right|     \\
 & = \sup_{\lambda \in V_{\xi} } \, S_m (\lambda) \\
 & \geq S_m (\xi)    \\
 & \geq \frac{A}{ (1+ | \xi | )^k }  ,
     \qquad \text{for} \; \xi \geq 2 \xi_0.
\end{aligned}
\end{equation*}
It follows easily from the above inequalities that
$\mathcal{I}_m (\zeta)$ 
is slowly decreasing.

%
%
%
%

\subsection{The case when $n$ is even}
Next, we consider the case when $n$ is an even number.
Our first objective in this subsection is to give an asymptotic expansion of $I (\lambda)$
as $\lambda \to +\infty$.

\medskip

We put $n-2 =2 \ell$ ($\ell \in \zz, \; \ell \geq 0$).
In this case, $I (\lambda)$ is written in terms of Bessel functions of the first kind.
More precisely, we have the following.


\begin{theorem}
\label{theorem-of-Bessel-function-series-expansion}
\begin{enumerate}
\item[(i)]
$I(\lambda)$ has the following {\it Bessel function series} expansion.
\begin{align}
\label{Bessel-function-series-expansion-0}
I(\lambda) &= 
          \sum_{m=0}^{N-1} \, d_m \, 
                         \left( \frac{t}{\lambda} \right)^{ \ell +m } \, J_{\ell +m} (\lambda t) 
                                \;  + r_N (\lambda),  \quad \text{where} \\
 \label{constant-d-m}
 d_m &=   \frac{\pi}{2}  ( 2 \ell + 2m -1 )!!  \times  
                       \sum_{j, k \geq 0, \,  \, j+k =m} \,  
                         a_0^{ \ell +k  - \frac{1}{2}  }  \, b_j^{ ( \ell +k ) } \, c_k.
\end{align}
The constants $a_0$, $b_j^{(\ell+k)}$, and $c_k$
are given respectively 
by the expressions \eqref{expression-of-a-0}, \eqref{definition-of-b-j-m}, 
and \eqref{definition-of-c-k} in Appendix A.
In addition, 
$L!!$ is defined as follows:
$(-1)!! =1$, $0!! =1$, and for any positive integer $L$, $L!!$ is 
the product of all the integers from $1$ up to $L$ that have the same parity as $L$.

\item[(ii)] The $N$-th remainder term $r_N (\lambda)$ satisfies
$$
| r_N (\lambda) | \leq \frac{ C_N }{ (|\lambda|+1)^N  }, \qquad
\text{for} \; \lambda \in \rr.
$$
\end{enumerate}
\end{theorem}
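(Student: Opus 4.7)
The plan is to reduce $I(\lambda)$ to a finite linear combination of Bessel-function integrals by performing two series expansions inside the integrand, and then to bound the remainder by integration by parts.

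First I would expand the hypergeometric factor as a convergent power series in $z=(\cosh t-\cosh s)/(2\cosh t)$, which converges uniformly on $[0,t]$ since there $|z|<1/2$; this produces
$$(\cosh t-\cosh s)^{\ell-1/2}\,{}_2F_1(\ldots) = \sum_{k\ge 0}c_k\,(\cosh t-\cosh s)^{\ell+k-1/2},$$
with $c_k$ absorbing the Pochhammer coefficients together with the factor $(2\cosh t)^{-k}$. Next I would use the factorisation
$$\cosh t-\cosh s = 2\sinh\tfrac{t+s}{2}\,\sinh\tfrac{t-s}{2} = (t^2-s^2)\,h(s),$$
where, after dividing out the two $\sinh(x)/x$ factors, $h(s)$ extends to a real-analytic, even, strictly positive function on $[-t,t]$ with $h(t)=\sinh t/(2t)=:a_0$. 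A Taylor expansion of $h(s)^{\ell+k-1/2}$ in the variable $u=t^2-s^2$ about $u=0$ then gives, for any $N\ge 1$,
$$h(s)^{\ell+k-1/2} = a_0^{\ell+k-1/2}\sum_{j=0}^{N-1}b_j^{(\ell+k)}(t^2-s^2)^j \;+\; O\bigl((t^2-s^2)^N\bigr),$$
with $b_0^{(\ell+k)}=1$ and the $b_j^{(\ell+k)}$ as given in Appendix A.

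Substituting the two expansions into $I(\lambda)$, regrouping the contributions with a fixed total exponent $\ell+m-1/2$ of $t^2-s^2$ (i.e.\ summing over $j+k=m$) and truncating at $m<N$, reduces (i) to the classical Poisson integral
$$\int_0^t\cos(\lambda s)(t^2-s^2)^{\nu-1/2}\,ds = \frac{\sqrt{\pi}}{2}\,\Gamma\bigl(\nu+\tfrac12\bigr)\Bigl(\frac{2t}{\lambda}\Bigr)^{\nu}J_{\nu}(\lambda t),$$
specialised to the positive integer $\nu=\ell+m$. Using $\Gamma(\nu+\tfrac12)=(2\nu-1)!!\sqrt{\pi}/2^{\nu}$, the right-hand side becomes $\frac{\pi}{2}(2\ell+2m-1)!!(t/\lambda)^{\ell+m}J_{\ell+m}(\lambda t)$, and the coefficient of this term is exactly the $d_m$ of \eqref{constant-d-m}, yielding \eqref{Bessel-function-series-expansion-0}.

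For part (ii), the remainder $r_N(\lambda)=\int_0^t\cos(\lambda s)R_N(s)\,ds$ collects all tail terms with $j+k\ge N$. By construction $R_N$ is real-analytic on $(-t,t)$, even, and vanishes to order at least $\ell+N-1/2$ at $s=\pm t$, so its extension by zero to $\mathbb R$ lies in $C^{N-1}(\mathbb R)$ with $R_N^{(N)}\in L^1(\mathbb R)$. Symmetrising $r_N(\lambda)$ to an integral over $[-t,t]$ and then performing $N$ integrations by parts produces no boundary contributions and yields $|r_N(\lambda)|\le C_N\,|\lambda|^{-N}$ for $|\lambda|\ge 1$; the trivial bound $|r_N(\lambda)|\le\int_{-t}^t|R_N|\,ds$ handles $|\lambda|<1$, and combining the two gives the claimed $|r_N(\lambda)|\le C_N(1+|\lambda|)^{-N}$. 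The main technical step is controlling the Taylor remainder of $h(s)^{\ell+k-1/2}$ in $u=t^2-s^2$ uniformly on all of $[0,t^2]$ — this follows from the real-analyticity of $h$ viewed as a function of $s^2$ (equivalently of $u$) on a complex neighbourhood of $[0,t^2]$ together with standard Taylor-remainder estimates, and since only finitely many $k<N$ are involved, uniformity in $k$ is automatic.
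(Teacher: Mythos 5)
Your proposal is correct and follows essentially the same route as the paper: expand the ${}_2F_1$ factor in powers of $\cosh t-\cosh s$, factor $\cosh t-\cosh s=(t^2-s^2)h(s)$ and Taylor-expand $h^{\ell+k-1/2}$, evaluate the resulting terms by the Poisson/Bessel integral $\int_0^t\cos(\lambda s)(t^2-s^2)^{\nu-1/2}\,ds=\tfrac{\pi}{2}(2\nu-1)!!(t/\lambda)^{\nu}J_{\nu}(\lambda t)$, and bound the remainder by integration by parts using the high-order vanishing at $s=\pm t$. Your treatment of the remainder is a mild streamlining (you keep the $j+k\ge N$ cross terms in the integrand and integrate by parts $N$ times directly, whereas the paper bounds those via $|J_m|\le 1$, uses an even number of integrations by parts, and then replaces $N$ by $2N+1$), but the substance is the same; note also that your value $a_0=\sinh t/(2t)$ is the correct one (the paper's display \eqref{expression-of-a-0} has a typo).
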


We will prove the above theorem in Appendix A.

Formula \eqref{Bessel-function-series-expansion-0} gives an explicit asymptotic expansion
of $I(\lambda)$ as $\lambda \to \infty$, because the asymptotic expansion of 
each Bessel function is well known.
In fact, we have

\begin{theorem}(See, for example, \cite{Gradshteyn-Ryzhik2015}, \S8.451, Formula 1.)
\label{asymptotic-expansion-of-Bessel-function}
If $z \to \infty$ under the condition that $ | \arg z | < \pi$, we have
\begin{equation*}
\begin{aligned}
J_m (z) &= \sqrt{ \frac{2}{\pi z}  } \cos \left( z - \frac{\pi}{2} m - \frac{\pi}{4} \right)        \\
           & \quad \times 
               \left\{ \; \sum_{ k=0 }^{ N-1 }
             \frac{ (-1)^k \Gamma (m+2k + \frac{1}{2} )  }{  (2k)!  \Gamma (m-2k + \frac{1}{2} )  }
                  \frac{1} { (2z)^{2k} } \; \; + R_N^{(1)}          \right\}                             \\
           & - \sqrt{ \frac{2}{\pi z}  } \sin \left( z - \frac{\pi}{2} m - \frac{\pi}{4}    \right)   \\
           & \quad \times 
               \left\{ \; \sum_{ k=0 }^{ N-1 }
             \frac{ (-1)^k \Gamma (m+2k + \frac{3}{2} )  }{  (2k+1)!  \Gamma (m-2k - \frac{1}{2} )  }
                  \frac{1} { (2z)^{2k+1} } \; \; + R_N^{(2)}        \right\},   
\end{aligned}
\end{equation*}
where the $N$-th remainder terms $R_N^{(1)}$ and $R_N^{(2)}$ satisfy
\begin{equation*}
\begin{aligned}
| R_N^{(1)} |
    & <
      \left| 
          \frac{ (-1)^N \Gamma (m+2N + \frac{1}{2} )  }{  (2N)!  \Gamma (m-2N + \frac{1}{2} ) (2z)^{2N} } 
             \right|,  \quad (N > \frac{m}{2} - \frac{1}{4} )  \\
| R_N^{(2)} |
    & <
      \left|  
          \frac{ (-1)^N \Gamma (m+2N + \frac{3}{2} )  }{  (2N+1)!  \Gamma (m-2N - \frac{1}{2} ) (2z)^{2N+1} } 
             \right|, \quad (N > \frac{m}{2} - \frac{3}{4} ).
\end{aligned}
\end{equation*}
\end{theorem}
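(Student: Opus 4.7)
The plan is to derive this classical asymptotic expansion starting from the integral representations of the Hankel functions. Specifically, I would use the fact that for $|\arg z|<\pi$,
$$
H_m^{(1)}(z) = \left(\frac{2}{\pi z}\right)^{1/2} \frac{e^{i(z - m\pi/2 - \pi/4)}}{\Gamma(m+\tfrac12)} \int_0^\infty e^{-u}\, u^{m-1/2}\!\left(1 + \frac{u}{2iz}\right)^{m-1/2} du,
$$
with the analogous expression for $H_m^{(2)}(z)$ obtained by replacing $i$ by $-i$, and then use $J_m(z)=\tfrac12(H_m^{(1)}(z)+H_m^{(2)}(z))$ to obtain a single real-analytic representation amenable to asymptotic analysis.

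Next, I would expand the factor $(1+u/(2iz))^{m-1/2}$ using the binomial theorem with an explicit Taylor remainder of order $2N$:
$$
(1+w)^{m-1/2} = \sum_{k=0}^{2N-1}\binom{m-1/2}{k} w^k + \rho_{2N}(w),
$$
and integrate term by term against $e^{-u}u^{m-1/2}$ using $\int_0^\infty e^{-u}u^{m+k-1/2}\,du = \Gamma(m+k+\tfrac12)$. Grouping the terms with even $k$ (which produce real coefficients) and those with odd $k$ (which produce imaginary coefficients), and then using $i^{-k}$ to recombine with the oscillating factor $e^{i(z-m\pi/2-\pi/4)}$, one obtains the two cosine and sine series in the statement. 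The binomial coefficient $\binom{m-1/2}{2k}$ turns into the quotient $\Gamma(m+2k+\tfrac12)/((2k)!\,\Gamma(m-2k+\tfrac12))$ via repeated application of the functional equation $\Gamma(x+1)=x\Gamma(x)$, and similarly for the odd-$k$ sum.

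For the remainder estimates, I would use the integral form of the Taylor remainder in the binomial expansion. When $|\arg z|<\pi$, the denominators $|1+su/(2iz)|$ for $0\le s\le 1$ can be bounded below by a positive constant, so that
$$
|\rho_{2N}(u/(2iz))| \le C_{N,m}\,\frac{u^{2N}}{|z|^{2N}}
$$
uniformly on the range of integration. Substituting this bound into the integral representation and again using the Gamma integral produces the stated majorant $|R_N^{(1)}|<|(-1)^N\Gamma(m+2N+\tfrac12)/((2N)!\Gamma(m-2N+\tfrac12)(2z)^{2N}|$. The hypotheses $N>\tfrac{m}{2}-\tfrac14$ and $N>\tfrac{m}{2}-\tfrac34$ arise here to ensure the relevant Gamma function factors are positive and the Taylor remainder argument is valid.

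The main obstacle will be obtaining the precise form of the remainder bounds: the leading coefficients follow cleanly from the binomial-plus-Gamma calculation, but showing that the remainder is majorized term-for-term by the first omitted term (as the theorem asserts) requires a careful sign/monotonicity argument on the Taylor remainder of $(1+w)^{m-1/2}$, not just a crude bound. One approach is to observe that the coefficients in the binomial expansion have constant sign beyond a certain index, allowing one to apply the alternating-series-style remainder estimate; this ultimately is what forces the cutoffs $N>m/2-1/4$ and $N>m/2-3/4$.
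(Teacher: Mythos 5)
The paper does not prove this statement at all: it is quoted verbatim as a classical formula with a citation to Gradshteyn--Ryzhik (\S 8.451.1), and it is only ever applied for real $\lambda$, so any correct reconstruction of the textbook proof would be acceptable. Your outline follows the standard route (Hankel's integral representations plus a binomial expansion with explicit remainder, recombined via $J_m=\tfrac12(H_m^{(1)}+H_m^{(2)})$), and the algebraic part --- term-by-term integration against $e^{-u}u^{m-1/2}$ and the identification $\binom{m-1/2}{2k}\Gamma(m+2k+\tfrac12)$-type coefficients --- is sound.

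The gap is in the error analysis and the claimed range of validity. First, the two representations you combine are not simultaneously valid on $|\arg z|<\pi$: the Hankel-type integral for $H_m^{(1)}$ holds for $-\pi/2<\arg z<3\pi/2$ and the one for $H_m^{(2)}$ for $-3\pi/2<\arg z<\pi/2$, so writing $J_m$ this way only covers $|\arg z|<\pi/2$; reaching $|\arg z|<\pi$ requires rotating the $u$-contour to rays $\arg u=\beta$, which your sketch omits. Second, the key quantitative claim --- that $|1+su/(2iz)|$ is bounded below by a positive constant uniformly for $0\le s\le1$, $u>0$, $|\arg z|<\pi$ --- is false: with your sign choice the factor vanishes when $z=isu/2$, i.e.\ on the ray $\arg z=\pi/2$ (and the conjugate factor vanishes on $\arg z=-\pi/2$), both well inside the claimed sector, so the remainder estimate as written breaks down exactly where you need it. Third, the strict ``less than the first omitted term'' bound does not come from a crude constant at all: it comes from the Lagrange form of the binomial remainder together with $|1+\theta w|\ge 1$ and the exponent condition $2N>m-\tfrac12$ (this is where $N>\tfrac m2-\tfrac14$, resp.\ $N>\tfrac m2-\tfrac34$, enters), and that inequality $|1+\theta w|\ge1$ holds for $H^{(1)}$ only when $\operatorname{Im}z\ge0$ and for $H^{(2)}$ only when $\operatorname{Im}z\le0$; hence the clean bound for $J_m$ as stated is really a positive-real-$z$ (or suitably restricted) statement, and for general $|\arg z|<\pi$ the classical bounds acquire extra secant-type factors. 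So your final paragraph correctly identifies where the difficulty lies, but the mechanism you propose does not yet deliver the stated inequalities in the stated generality; for the paper's purposes ($\lambda\in\mathbb R$) the restricted version suffices, and the clean way to finish is the Lagrange-remainder argument above rather than a uniform lower bound on the denominator.
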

 
Theorem \ref{theorem-of-Bessel-function-series-expansion} 
and Theorem \ref{asymptotic-expansion-of-Bessel-function}
yield the following.

\begin{corollary}
$I (\lambda)$ is written as
\begin{equation*}
\begin{aligned}
I (\lambda) &=  d_0 \left( \frac{t}{\lambda} \right)^{ \ell } 
                       \times \sqrt{ \frac{2}{\pi \lambda t}  }  
                         \cos \left( \, \lambda t - \frac{\pi}{2} \ell - \frac{\pi}{4} \right) 
                            +  \widetilde{r_1} (\lambda), \\
\text{where } & \widetilde{r_1} (\lambda) \text{ satisfies}  \\
| \widetilde{r_1} (\lambda) | & \leq \frac{C}{ ( | \lambda | +1 )^{ \ell +\frac{3}{2}  } },
\qquad \lambda \in \rr.
\end{aligned}
\end{equation*}
\end{corollary}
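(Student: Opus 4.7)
The plan is to combine the Bessel function series expansion from Theorem~\ref{theorem-of-Bessel-function-series-expansion} with the large-argument asymptotic expansion from Theorem~\ref{asymptotic-expansion-of-Bessel-function}. The idea is to extract the $m=0$ summand of the expansion, replace $J_{\ell}(\lambda t)$ by its leading asymptotic, and absorb every remaining piece into $\widetilde{r_1}(\lambda)$.

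First I would apply Theorem~\ref{theorem-of-Bessel-function-series-expansion} with $N$ chosen so that $N \ge \ell+2$. This produces
\begin{equation*}
I(\lambda) \,=\, d_0 \Bigl(\tfrac{t}{\lambda}\Bigr)^{\ell} J_\ell(\lambda t) \;+\; \sum_{m=1}^{N-1} d_m \Bigl(\tfrac{t}{\lambda}\Bigr)^{\ell+m} J_{\ell+m}(\lambda t) \;+\; r_N(\lambda),
\end{equation*}
where $|r_N(\lambda)| \le C_N(|\lambda|+1)^{-N} \le C_N(|\lambda|+1)^{-\ell - 3/2}$. Next, I would apply Theorem~\ref{asymptotic-expansion-of-Bessel-function} to the leading Bessel function only, taking one term plus remainder; this yields
\begin{equation*}
J_\ell(\lambda t) \,=\, \sqrt{\tfrac{2}{\pi \lambda t}}\,\cos\!\Bigl(\lambda t - \tfrac{\pi}{2}\ell - \tfrac{\pi}{4}\Bigr) \;+\; O\!\bigl(\lambda^{-3/2}\bigr).
\end{equation*}
Multiplying through by $d_0 (t/\lambda)^\ell$ produces the advertised main term together with an error of order $\lambda^{-\ell-3/2}$. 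For the residual summands with $1 \le m \le N-1$, the standard bound $|J_{\ell+m}(\lambda t)| = O(\lambda^{-1/2})$ (a direct consequence of Theorem~\ref{asymptotic-expansion-of-Bessel-function}) gives
\begin{equation*}
\left|d_m \Bigl(\tfrac{t}{\lambda}\Bigr)^{\ell+m} J_{\ell+m}(\lambda t)\right| \,=\, O\!\bigl(\lambda^{-\ell-m-1/2}\bigr) \,\subset\, O\!\bigl(\lambda^{-\ell - 3/2}\bigr)
\end{equation*}
since $m \ge 1$. Collecting the three error sources establishes the required bound on $\widetilde{r_1}(\lambda)$ for $|\lambda|$ large.

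The main obstacle is bookkeeping for small $|\lambda|$: the candidate principal term $d_0(t/\lambda)^\ell\sqrt{2/(\pi\lambda t)}\cos(\cdot)$ blows up as $\lambda \to 0^+$ whereas $I(\lambda)$ is entire, so $\widetilde{r_1}$ cannot literally satisfy a bound finite at $\lambda=0$. In practice one should read the statement as an assertion about the behavior of $\widetilde{r_1}$ at infinity and treat some neighborhood of the origin separately, or equivalently restrict to $|\lambda| \ge \lambda_0$ for some fixed $\lambda_0>0$, where the Bessel asymptotics are valid and the bound follows by absorbing constants. Once this convention is fixed, the remaining analysis is a routine triangle-inequality combination of the two theorems.
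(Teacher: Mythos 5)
Your proposal is correct and follows essentially the same route the paper intends: the corollary is stated there as an immediate consequence of Theorem~\ref{theorem-of-Bessel-function-series-expansion} and Theorem~\ref{asymptotic-expansion-of-Bessel-function}, combined exactly as you do (leading Bessel term replaced by its first asymptotic, tail terms and $r_N$ absorbed into $\widetilde{r_1}$). Your caveat about small $|\lambda|$ is well taken and harmless, since the paper only invokes the estimate for $\xi>\delta_0$ in the subsequent slow-decrease argument.
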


Now we will prove that
$I (\lambda)$ 
is slowly decreasing.

For $\xi \in \rr$, we define three subsets $U_{\xi}$, $V_{\xi}$, and $W_{\xi}$ of $\cc$ as follows.
\begin{equation*}
\begin{aligned}
U_{\xi} &:= \{ \zeta \in \cc ; || \zeta - \xi || < A \log (2 + | \xi | ) \}, \\
V_{\xi} &:= U_{\xi} \cap \rr, \\
W_{\xi} &:= \{ \, \lambda \in \rr ; \xi - 2 \pi < \lambda t < \xi  \,  \}, 
\end{aligned}
\end{equation*}
Obviously, $U_{\xi} \supset V_{\xi}$.
We take 
\begin{equation*}
A= \sqrt{ \frac{1}{2 \pi t }  }  d_0  t^{\ell}.
\end{equation*}
Then we see easily that
\begin{equation*}
\sup_{\lambda \in W_{\xi} } \,
   \left| 
     d_0 \left( \frac{t}{\lambda} \right)^{ \ell } 
                       \times \sqrt{ \frac{2}{\pi \lambda t}  }  
                         \cos \left( \, \lambda t - \frac{\pi}{2} \ell - \frac{\pi}{4} \right) 
       \right| \geq 2A | \xi |^{- \ell - \frac{1}{2}  }
\quad \text{for} \; \, \xi > 2 \pi.
\end{equation*}
Next, we take sufficiently large positive constant $\delta_0 (> 2 \pi)$. 
Then for $\xi > \delta_0$, we have
$V_{\xi} \supset W_{\xi}$.
(Take $\delta_0$ such that $A \log (2 + | \delta_0 | ) \geq 2 \pi t$.)
As a result, we have
\begin{equation*}
U_{\xi} \supset V_{\xi} \supset W_{\xi}, \quad \text{for} \; \, \xi > \delta_0.
\end{equation*}
Therefore, we have
\begin{equation*}
\begin{aligned}
\sup_{\zeta \in U_{\xi} } \, | I (\zeta) |
 & \geq \sup_{\lambda \in V_{\xi} } \, | I (\lambda) | \\
 & \geq \sup_{\lambda \in W_{\xi} } \, | I (\lambda) | \\
 & \geq 
   \sup_{\lambda \in W_{\xi} } \,
   \left| 
     d_0 \left( \frac{t}{\lambda} \right)^{ \ell } 
                       \times \sqrt{ \frac{2}{\pi \lambda t}  }  
                         \cos \left( \, \lambda t - \frac{\pi}{2} \ell - \frac{\pi}{4} \right) 
       \right| 
         - \sup_{\lambda \in W_{\xi} } \, | \widetilde{r_1} (\lambda) |  \\
 & \geq 
    2A | \xi |^{- \ell - \frac{1}{2}  } -  \frac{C}{ ( | \xi | +1 )^{ \ell + \frac{3}{2}  } } \\
 & \geq 
  A ( | \xi | +1)^{ -\ell - \frac{1}{2} } 
    + ( | \xi | +1)^{ -\ell - \frac{1}{2} } 
        \left( A- C ( | \xi | +1 )^{ -1  } \right),
          \quad \text{for} \; \, \xi > \delta_0.
\end{aligned}
\end{equation*}
Again, we take another positive constant $\delta_1 (> \delta_0)$ such that
$$
A- C ( | \xi | +1 )^{ -1  } > 0, \quad \text{for} \; \, \xi > \delta_1.
$$
Then we have
$$
\sup_{\zeta \in U_{\xi} } \, | I (\zeta) | \geq A ( | \xi | +1)^{ -\ell - \frac{1}{2} } 
   \quad \text{for} \; \, \xi > \delta_1.
$$
A similar argument holds for $\xi < - \delta_1$.
Namely, we have
\be\label{weak-Ehrenpreis-estimate}
\sup_{\zeta \in U_{\xi} } \, | I (\zeta) | \geq A ( | \xi | +1)^{ -\ell - \frac{1}{2} } 
   \quad \text{for} \; \,  | \xi | > \delta_1.
\ee
It follows from \eqref{weak-Ehrenpreis-estimate} that
$I (\lambda)$ 
is slowly decreasing.

%
%
%
%
%

\section{Appendix A: Bessel series expansion}

In this appendix, we will prove Theorem \ref{theorem-of-Bessel-function-series-expansion}.

We put
\begin{align*}
F(z) &:= 
        {}_2 F_1 
      \left( 
 1- \frac{q}{2}, \; \frac{q}{2} \; ;  \; \frac{n-1}{2}  \;  ;
 \frac{ z }{ 2\cosh t }
     \right), \\
Z(s) &:= \cosh t - \cosh s.
\end{align*}
Let us write the power series expansion of $F(z)$ as
$$
F(z) = \sum_{k=0}^{\infty} \, c_k z^k 
      = \sum_{k=0}^{N-1} \, c_k z^k \; + \, F_N (z),
$$
where $F_N$ is the $N$-th remainder term and the $k$-th coefficient
$c_k$ is given by
\be\label{definition-of-c-k}
c_k = 
\frac{ (1- \frac{q}{2} )_k ( \frac{q}{2} )_k }{ (\frac{n-1}{2} )_k \, k! \, (2 \cosh t)^k }.
\ee
Then $I (\lambda)$ is written as
\be\label{expansion-of-Fourier-integral}
\begin{aligned}
I (\lambda) 
 &=
  \sum_{k=0}^{N-1} c_k \, \int_0^t \; \cos (\lambda s) \;
   \{ Z(s) \}^{ \ell +k - \frac{1}{2} } \, ds \\
&  \; \; +  \int_0^t \; \cos (\lambda s) \; 
       \{ Z(s) \}^{ \ell  - \frac{1}{2} } \, F_N ( Z(s) ) \, ds.
\end{aligned}
\ee
Next, we put
\be\label{k-th-Fourier-integral}
I_k (\lambda) :=
 \int_0^t \; \cos (\lambda s) \;
   \{ Z(s) \}^{ \ell +k - \frac{1}{2} } \, ds
\ee
From now on, we will expand the right hand side of \eqref{k-th-Fourier-integral}
as a series of Bessel functions.
Let us define a function $f$ by
$$
f(z) = \sum_{k=0}^{\infty} \, \frac{1}{(2k)!} z^k
$$
Then we see easily that $\cosh z = f(z^2)$.
Moreover, we have
\begin{align*}
{}& \frac{ f(t^2) -f(t^2-z)  }{ z } = \sum_{k=0}^{\infty} \,  a_k z^k, 
  \quad \text{where} \\
{}& a_k = \frac{ (-1)^k f^{(k+1)} (t^2)  }{ (k+1)!  }, \; \; \; (k=0,1,2, \cdots).
\end{align*}
Therefore, we can put
\begin{align}
\left\{
 \frac{ f(t^2) -f(t^2-z)  }{ z } 
  \right\}^{ m - \frac{1}{2}  } 
&= a_0^{ m - \frac{1}{2}  }
    \left( \, 1 + \frac{a_1}{a_0} z + \frac{a_2}{a_0} z^2 + \cdots \right)^{ m - \frac{1}{2}  }  \\
\label{definition-of-b-j-m}
&= a_0^{ m - \frac{1}{2}  } \,
    ( 1 + b_1^{(m)} z + b_2^{(m)} z^2 + \cdots ) \\
&= a_0^{ m - \frac{1}{2}  } \,
    ( 1 + b_1^{(m)} z + b_2^{(m)} z^2 + \cdots + b_{N-1}^{(m)} z^{N-1}) 
      + g_{(m, N)} (z) z^N,
  \label{expansion-of-fractional-power-of-function}
\end{align}
where $g_{(m, N)} (z)$ is holomorphic near the line segment $[0, t^2] \subset \cc$
and where the coefficients $b_j^{(m)} \; (j=1,2,3 \cdots)$ are written as a polynomial
of $\displaystyle{\frac{a_1}{a_0},  \frac{a_2}{a_0}, \cdots, \frac{a_j}{a_0}  }$.
For example, the first three coefficients 
$b_1^{(m)}, b_2^{(m)}, b_3^{(m)}$ in the above expansion
are given by
\begin{align*}
b_1^{(m)} &= ( m - \frac{1}{2} ) \frac{a_1}{a_0}, \quad
  b_2^{(m)} =  ( m - \frac{1}{2} ) \frac{a_2}{a_0}
                    + \begin{pmatrix} m - \frac{1}{2} \\ 2 \end{pmatrix}
                       \left( \frac{a_1}{a_0} \right)^2, \\
b_3^{(m)} &=  ( m - \frac{1}{2} ) \frac{a_3}{a_0}
                   + 2 \begin{pmatrix} m - \frac{1}{2} \\ 2 \end{pmatrix}
                      \frac{a_1 a_2}{a_0^2}
                        + \begin{pmatrix} m - \frac{1}{2} \\ 3 \end{pmatrix}
                           \left( \frac{a_1}{a_0} \right)^3.
\end{align*}
Here we define $b_0^{(m)} :=1$.
We also note that for each fixed $t >0$ 
\be\label{expression-of-a-0}
a_0 = \frac{  \sinh \sqrt{t} }{ 2 \sqrt{t} } >0.
\ee
By substituting $z= t^2-s^2$ in \eqref{expansion-of-fractional-power-of-function}, 
we have
\begin{equation*}
\begin{aligned} 
 \left( \frac{ \cosh t - \cosh s  }{ t^2 - s^2  } \right)^{ m - \frac{1}{2}  } 
 &= a_0^{ m - \frac{1}{2}  } \, 
     \sum_{j=0}^{N-1} \,
        b_j^{(m)} (t^2 -s^2 )^j   \\
        &\qquad + g_{(m, N)} (t^2 -s^2 ) (t^2 -s^2 )^N.
\end{aligned}
\end{equation*}
Thus we have
\begin{equation*}
\begin{aligned} 
{}& \{ Z(s) \}^{ \frac{ n-3 }{2} +k } \equiv \{ Z(s) \}^{ \ell+k - \frac{1}{2}  } \\
{}& \equiv ( \cosh t - \cosh s )^{ \ell+k - \frac{1}{2}  } \\
{}& \; = (t^2 -s^2 )^{ \ell+k - \frac{1}{2}  } \;
           \left( \frac{ \cosh t - \cosh s  }{ t^2 - s^2  } \right)^{ \ell +k - \frac{1}{2}  } \\
{}& \; =  a_0^{ \ell +k  - \frac{1}{2}  } \, 
              \sum_{j=0}^{N-1} \,  b_j^{( \ell +k)}      
                (t^2 -s^2 )^{ \ell+k+j - \frac{1}{2}  } \\
{}& \; \;  + (t^2 -s^2 )^{ \ell+k+N - \frac{1}{2}  } \, g_{( \ell+k , N)} (t^2 -s^2 ) 
\end{aligned}
\end{equation*}
Therefore, $I_k (\lambda)$ is rewritten as
\be\label{expansion-of-k-th-Fourier-integral}
\begin{aligned} 
I_k (\lambda) 
   & = a_0^{ \ell +k  - \frac{1}{2}  } \, 
            \sum_{j=0}^{N-1} \,  b_j^{( \ell +k)}
             \int_0^t \, \cos ( \lambda s ) (t^2 -s^2 )^{ \ell+k+j - \frac{1}{2}  } \; ds \\
    & \; \; + \int_0^t \, \cos ( \lambda s ) 
                   (t^2 -s^2 )^{ \ell+k+N - \frac{1}{2}  } \, g_{( \ell+k , N)} (t^2 -s^2 ) \; ds.
\end{aligned}
\ee
Here in the R. H. S. of \eqref{expansion-of-k-th-Fourier-integral}, we have
\be\label{integral-formula-for-Bessel-function}
\begin{aligned} 
{} & \int_0^t \, \cos ( \lambda s ) (t^2 -s^2 )^{ \ell+k+j - \frac{1}{2}  } \; ds \\
{} & = t^{ 2( \ell+k+j )  } \,
          \int_0^{ \frac{\pi}{2} } \; 
             \cos ( \lambda t \sin \theta ) \, \cos^{ 2( \ell+k+j ) } \theta \, d \theta \\
{} & = \frac{\pi}{2} \times  ( 2 \ell + 2k + 2j -1 )!!  \times 
            \left( \frac{ t }{ \lambda } \right)^{ \ell+k+j  } \times 
              J_{ \ell+k+j }  (\lambda t ).
\end{aligned}
\ee
In the above, $J_m (z)$ denotes the Bessel function of the first kind.
We also note that in the computation of \eqref{integral-formula-for-Bessel-function},
we used the following formula for Bessel functions.
\begin{equation*}
\int_0^{ \frac{\pi}{2} } \; 
  \cos ( z \sin \theta ) \, \cos^{ 2m } \theta \, d \theta
   =  \frac{\pi}{2} \times  \frac{ (2m-1)!! }{ z^m } \times J_m (z).
\end{equation*}
For details, see, for example, \cite{Gradshteyn-Ryzhik2015}, \S3.715, Formula 10.

Combining \eqref{expansion-of-Fourier-integral}, 
\eqref{k-th-Fourier-integral}, 
\eqref{expansion-of-k-th-Fourier-integral},
and \eqref{integral-formula-for-Bessel-function}, 
we have
\be\label{series-of-Bessel-functions-1}
\begin{aligned} 
{}& I (\lambda ) \\
  &= \sum_{k=0}^{N-1} \, \sum_{j=0}^{N-1} \, 
     a_0^{ \ell +k  - \frac{1}{2}  }  \, b_j^{ ( \ell +k ) } \, c_k 
      \times \frac{\pi}{2} \times  ( 2 \ell + 2k + 2j -1 )!!  \times 
            \left( \frac{ t }{ \lambda } \right)^{ \ell+k+j  } \times 
              J_{ \ell+k+j }  (\lambda t ) \\
  & \; \; + \sum_{k=0}^{N-1} \, I_k^N (\lambda) + R_N (\lambda),
\end{aligned}
\ee
where
\begin{align}
\label{remainder-term-1}
I_k^N (\lambda) 
    &= 
       \int_0^t \, \cos ( \lambda s ) 
            (t^2 -s^2 )^{ \ell+k+N - \frac{1}{2}  } \, g_{( \ell+k , N)} (t^2 -s^2 ) \; ds, \\
\label{remainder-term-2}
R_N (\lambda) 
    &= 
       \int_0^t \; \cos (\lambda s) \; 
           \{ Z(s) \}^{ \ell  - \frac{1}{2} } \, F_N ( Z(s) ) \, ds.
\end{align}
Let us rewrite \eqref{series-of-Bessel-functions-1} as follows.
\be\label{series-of-Bessel-functions-2}
\begin{aligned}
I (\lambda) &= \sum_{m=0}^{N-1} \, d_m \, 
                         \left( \frac{t}{\lambda} \right)^{ \ell +m } \, J_{\ell +m} (\lambda t) 
                       + \widetilde{R_N} (\lambda), \\
 \text{where} &  \\
    d_m &=   \frac{\pi}{2}  ( 2 \ell + 2m -1 )!!  \times  
                       \sum_{0 \leq j, k  \, j+k =m} \,  
                         a_0^{ \ell +k  - \frac{1}{2}  }  \, b_j^{ ( \ell +k ) } \, c_k, \\                
\widetilde{R_N} (\lambda) &= 
                \frac{\pi}{2}  \, \sum_{0 \leq j, k \leq N-1,  \, N \leq j+k} \, 
                   a_0^{ \ell +k  - \frac{1}{2}  }  \, b_j^{ ( \ell +k ) } \, c_k 
                     ( 2 \ell + 2k + 2j -1 )!!  \\
     & \qquad \qquad \qquad \qquad \times
             \left( \frac{ t }{ \lambda } \right)^{ \ell+k+j  } \,  J_{ \ell+k+j }  (\lambda t ) \\
     & \; \; + \sum_{k=0}^{N-1} \, I_k^N (\lambda) + R_N (\lambda)
\end{aligned}
\ee
Our next objective is to estimate the remainder term $\widetilde{R_N} (\lambda)$,
We need the following lemma.

\begin{lemma}\label{estimate-for-remainder-term}
\begin{enumerate}
\item[(i)]
Let $\varphi (s)$ be an even function of class $C^{2m}$ defined on $[-t, t]$.
We assume that 
\be
\varphi (\pm t) = \varphi' (\pm t) =\varphi'' (\pm t) = \cdots = \varphi^{(2m-1)} (\pm t) =0.
\ee
Then we have
$$
\int_0^t \, \cos (\lambda s) \varphi (s) \, ds 
= \frac{ (-1)^m }{ \lambda^{2m} } \int_0^t \, \cos (\lambda s) \varphi^{(2m)} (s) \, ds.
$$
\item[(ii)] Moreover, we have
$$
\Bigl| \, \int_0^t \, \cos (\lambda s) \varphi (s) \, ds  \, \Bigr| 
 \; \leq \; 
 \frac{ C  }{ | \lambda  |^{2m}  }, 
     \qquad \text{for} \; \lambda \in \rr \setminus \{ 0 \},
$$
\end{enumerate}
where the above constant $C$ does not depent on $\lambda$.
\end{lemma}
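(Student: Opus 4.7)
The plan is to establish part (i) by integration by parts, iterating the two-step procedure $m$ times, and then obtain (ii) as a direct consequence of (i) combined with the trivial bound on $\int_0^t |\varphi^{(2m)}(s)|\,ds$.

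For part (i), I would proceed by induction on $m$. For the base case $m=1$, integrate by parts once with $u = \varphi(s)$, $dv = \cos(\lambda s)\,ds$:
\begin{equation*}
\int_0^t \cos(\lambda s)\,\varphi(s)\,ds = \left[\frac{\varphi(s)\sin(\lambda s)}{\lambda}\right]_0^t - \frac{1}{\lambda}\int_0^t \sin(\lambda s)\,\varphi'(s)\,ds.
\end{equation*}
The boundary term vanishes at $s=t$ by hypothesis and at $s=0$ since $\sin 0=0$. Integrating by parts a second time with $u = \varphi'(s)$, $dv = \sin(\lambda s)/\lambda\,ds$,
\begin{equation*}
\frac{1}{\lambda}\int_0^t \sin(\lambda s)\,\varphi'(s)\,ds = \left[-\frac{\cos(\lambda s)\,\varphi'(s)}{\lambda^2}\right]_0^t + \frac{1}{\lambda^2}\int_0^t \cos(\lambda s)\,\varphi''(s)\,ds.
\end{equation*}
The boundary term at $s=t$ vanishes by the hypothesis $\varphi'(t)=0$, and at $s=0$ it vanishes because $\varphi$ even forces $\varphi'$ to be odd, so $\varphi'(0)=0$. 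This gives the $m=1$ case.

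For the inductive step, observe that $\varphi^{(2)}$ is again an even $C^{2(m-1)}$ function on $[-t,t]$ whose derivatives up to order $2(m-1)-1 = 2m-3$ vanish at $\pm t$ (by the original hypothesis), so the induction hypothesis applies to $\varphi^{(2)}$ at level $m-1$, yielding
\begin{equation*}
\int_0^t \cos(\lambda s)\,\varphi^{(2)}(s)\,ds = \frac{(-1)^{m-1}}{\lambda^{2(m-1)}}\int_0^t \cos(\lambda s)\,\varphi^{(2m)}(s)\,ds.
\end{equation*}
Combining with the base case gives the claim.

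For part (ii), since $\varphi^{(2m)}$ is continuous on the compact interval $[-t,t]$ it is bounded by some constant $M$ depending only on $\varphi$ and $t$, so
\begin{equation*}
\left|\int_0^t \cos(\lambda s)\,\varphi^{(2m)}(s)\,ds\right|\leq M t,
\end{equation*}
and dividing by $\lambda^{2m}$ gives the desired bound with $C := Mt$. The argument is entirely routine; the only point that requires attention is verifying that evenness of $\varphi$ automatically supplies the vanishing of odd-order derivatives at $0$, which together with the hypothesis at $\pm t$ is exactly what is needed to eliminate all boundary terms in the iterated integration by parts. I do not anticipate any real obstacle.
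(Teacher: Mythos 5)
Your proof is correct and follows exactly the paper's argument: the paper proves (i) by repeating integration by parts $2m$ times (your induction is just a careful organization of this, including the observation that evenness kills the odd-order boundary terms at $0$), and (ii) by bounding the remaining integral, with $C=\int_0^t|\varphi^{(2m)}(s)|\,ds$ in place of your $Mt$.
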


\begin{proof}
By repeating integral by parts $2m$-times, we get (i).
By taking $C= \displaystyle{  \int_0^t \, | \varphi^{(2m)} (s) | \, ds }$, we get the estimate (ii).
\end{proof}

We apply Lemma \ref{estimate-for-remainder-term} to the remaider terms
$I_k^N (\lambda)$ and $R_N (\lambda)$
by taking
\begin{align*}
\varphi (s) &=    
       (t^2 -s^2 )^{ \ell+k+N - \frac{1}{2}  } \, g_{( \ell+k , N)} (t^2 -s^2 ), \\
\text{and} \; \;
\varphi (s) &= 
       \{ Z(s) \}^{ \ell  - \frac{1}{2} } \, F_N ( Z(s) ), 
\end{align*}
respectively.
As a result, we have
\begin{align}
\label{estimate-for-I-k}
| I_k^N (\lambda) | & \leq    
       \frac{ C_{ (k, N)} }{ | \lambda  |^{2[ \frac{\ell+k+N-1}{2} ] }  }, 
         \qquad \text{for} \; \lambda \in \rr \setminus \{ 0 \}, \\
\label{estimate-for-R-N}
\text{and} \; \;
| R_N (\lambda) | & \leq 
     \frac{ C_{(N)}  }{ | \lambda  |^{2[  \frac{\ell+N-1}{2} ] }  }, 
        \qquad \text{for} \; \lambda \in \rr \setminus \{ 0 \},
\end{align}
where $[x]$ denotes the largest integer less than or equal to $x$.
In the above estimates, the constants $C_{ (k, N)}$ and $C_{(N)}$ do not depend
on $\lambda$.
Next, we go into the estimate of Bessel functions.
By the definition of the Bessel function
$$
J_m (z) = \frac{1}{\pi} \int_0^{\pi} \, \cos (m \theta - z \sin \theta ) \, d \theta,
$$
we have
\be\label{estimate-for-Bessel-function-1}
J_m (\lambda t) \leq 1, \qquad \text{for} \; \lambda \in \rr.
\ee
By 
\eqref{estimate-for-I-k},
\eqref{estimate-for-R-N},
and
\eqref{estimate-for-Bessel-function-1}, we have
\be\label{estimate-for-widetilde-R-N}
|  \widetilde{R_N} (\lambda)  | 
    \leq 
       \frac{\widetilde{C_N} }{ (|\lambda|+1)^{ 2[ \frac{N-1}{2} ]  }  },
\qquad \text{for} \; \lambda \in \rr,
\ee
if we take some suitable constant $\widetilde{C_N}$.

Here we replace $N$ by $2N+1$ in \eqref{series-of-Bessel-functions-2}. 
Then we have
\begin{align}
  \label{series-of-Bessel-functions-3}
I (\lambda) &= \sum_{m=0}^{N-1} \, d_m \, 
                         \left( \frac{t}{\lambda} \right)^{ \ell +m } \, J_{\ell +m} (\lambda t) 
                           + r_N (\lambda),  \; \; \text{where} \\   
  \label{remainder-term-r-N}
      r_N (\lambda) 
                &= \sum_{m=N}^{2N} \, d_m \, 
                        \left( \frac{t}{\lambda} \right)^{ \ell +m } \, J_{\ell +m} (\lambda t)
                           + \widetilde{R_{2N+1} } (\lambda).
\end{align}
We apply \eqref{estimate-for-Bessel-function-1} and
\eqref{estimate-for-widetilde-R-N} to R.H.S of
\eqref{remainder-term-r-N}. Then we see easily that 
there exists a positive constant
$C_N$ independent of $\lambda$ such that
$$
| r_N (\lambda) | \leq \frac{ C_N }{ (|\lambda|+1)^N  }, \qquad
\text{for} \; \lambda \in \rr.
$$

This finishes the proof of
Theorem \ref{theorem-of-Bessel-function-series-expansion}.
%
%

\section{Appendix B: A Smoothness Result}

In this section we will prove a smoothness result for quotients that is needed to complete the proof of Theorem \ref{T:surjectivity}.

We first introduce some notation which we will use below.  For any $z\in\cc$ and $r>0$, let $D_r(z)=\{\zeta\in\cc\,:\,|\zeta-z|\leq r\}$, let $\overline D_r(z)$ be its closure, and let $C_r(z)$ be the circle $\{\zeta\in\cc\,:\,|\zeta-z|=r\}$.  If $z=0$, we will just use the notation $D_r$ and $C_r$ in place of $D_r(0)$ and $C_r(0)$.

\begin{proposition}\label{T:smoothness}
Let $U\subset \rr^m$ and  $V\subset\cn$ be open sets, and let $F(x,z)$ be $C^\infty$ on $U\times V$ and holomorphic in $z$ for each fixed $x$.  Suppose that $g(z)$ is a nonzero holomorphic function on $V$ such that $F(x,z)/g(z)$ is holomorphic for each $x$.  Then $F(x,z)/g(z)$ is $C^\infty$ on $U\times V$. 
\end{proposition}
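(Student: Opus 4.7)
The plan is to work locally: since $C^\infty$-smoothness is a local property, it suffices to verify it in a neighborhood of each $(x_0, z_0) \in U \times V$. On the open set where $g(z) \neq 0$, the quotient $F/g$ is manifestly $C^\infty$ because $F$ is $C^\infty$ and $1/g$ is holomorphic, hence smooth, in $z$. The whole content of the proposition therefore lies at points where $g(z_0) = 0$.

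The core idea is to represent $F/g$ by a one-variable Cauchy integral along a contour on which $g$ does not vanish. First I would choose adapted coordinates. Since $g$ is a nonzero holomorphic function, the identity theorem prevents $g$ from vanishing identically on any open subset of the connected component of $V$ containing $z_0$. Consequently there is a direction $v \in \cn$ such that the map $\zeta \mapsto g(z_0 + \zeta v)$ is not identically zero near $\zeta = 0$; otherwise $g$ would vanish along every complex line through $z_0$ in a neighborhood of $z_0$ and would itself be identically zero there. After a translation and a complex-linear change of coordinates in $\cn$ (which preserves smoothness of $F$, holomorphy of $g$, and holomorphy of $F/g$), I may assume $z_0 = 0$ and $v = e_n$. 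Then $g(0, \ldots, 0, \zeta)$ is a nonzero holomorphic function of the single variable $\zeta$, so it has an isolated zero at $\zeta = 0$ of some finite order.

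Choose $r > 0$ so that $g(0,\ldots,0,\zeta) \neq 0$ for $0 < |\zeta| \leq r$ and the closed polydisc $\{z' \in \cc^{n-1} : \|z'\| \leq \delta\} \times \{|\zeta| \leq r\}$ lies in $V$ for some $\delta > 0$. By continuity of $g$ and compactness of the circle $C_r$, after shrinking $\delta$ we have $g(z', \zeta) \neq 0$ for all $\|z'\| \leq \delta$ and all $\zeta \in C_r$. For each such $z'$ and each $x \in U$, the function $z_n \mapsto (F/g)(x, z', z_n)$ is holomorphic on a neighborhood of the closed disc $|z_n| \leq r$ by hypothesis, so the one-variable Cauchy formula applied in $z_n$ yields
\begin{equation*}
\frac{F(x, z', z_n)}{g(z', z_n)} = \frac{1}{2\pi i} \oint_{C_r} \frac{F(x, z', \zeta)}{g(z', \zeta)\,(\zeta - z_n)} \, d\zeta, \qquad |z_n| < r.
\end{equation*}
The integrand is $C^\infty$ in the real parameters underlying $(x, z', z_n)$ and continuous in $\zeta \in C_r$, with the factor $g(z', \zeta)\,(\zeta - z_n)$ bounded away from zero uniformly on the relevant compact parameter set. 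Differentiating under the integral sign to all orders shows that $F/g$ is $C^\infty$ in a neighborhood of $(x_0, 0)$ in $U \times V$.

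The main obstacle is conceptual rather than technical: one must realize that holomorphy of $F/g$ in $z$ permits an application of the one-variable Cauchy formula in a single, carefully chosen direction, avoiding any need for a global handle on the (possibly complicated) zero set of $g$. Once the adapted coordinates are fixed, the remainder is routine differentiation under the integral sign.
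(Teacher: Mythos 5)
Your proof is correct and follows essentially the same route as the paper's: after localizing at a zero of $g$ and choosing coordinates so that $g$ is not identically zero along one coordinate line through the point, you represent $F/g$ by a Cauchy integral over a contour on which $g$ is bounded away from zero and then differentiate under the integral sign. The only difference is that you apply the one-variable Cauchy formula in the distinguished direction, treating the remaining variables as smooth parameters, whereas the paper uses the iterated Cauchy formula over the full torus $C_{s_1}\times\cdots\times C_{s_n}$ (which needs the extra step of producing a tube around the first circle disjoint from the zero locus); your variant is marginally more economical and equally valid.
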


\begin{proof}
  It suffices to prove that $F/g$ is smooth on a neighborhood of each point $(x_0,z_0)\in U\times V$ for which $g(z_0)=0$.   For simplicity, we can assume that $x_0=0$ and $z_0=0$ and then by shrinking $U$ and $V$, we can assume that $U$ and $V$ are balls centered at the origins in $\rr^m$ and $\cc^n$, respectively.   We can also assume that $F$ is not identically $0$.

Let $\mathcal V\subset V$ be the zero locus $\mathcal V=\{z\in V\,:\,g(z)=0\}$.  Since $g$ is not identically $0$, it is not identically $0$ on some complex line $\ell$ through the origin in $V$; applying a linear operator on $\cn$, we can assume that $\ell=\{z\in V\,:\,z_2=\cdots=z_n=0\}$.  (If $n=1$, we have $\ell=V$.)   

Since we're trying to prove the smoothness of $F/g$ on a neighborhood of $(0,0)$, we can conveniently  shrink $V$ even further and assume that $V$ is an open polydisk $D_{r_1}\times\cdots\times D_{r_n}$ in $\cn$, so $\ell=D_{r_1}\times\{0\}\times\cdots\times\{0\}$.  

Now $\ell\cap \mathcal V$ consists of isolated points, so in particular there is closed disk $\overline D_{s_1}$ (with positive radius $s_1$) inside $D_{r_1}$ such that
$\mathcal V\cap (D_{s_1}\times\{0\}\times\ldots\times\{0\})=(0,\ldots,0)$.

 Since $C_{s_1}\times\{0\}\times\cdots\times\{0\}$ is a compact set disjoint from the (relatively) closed set $\mathcal V$,
there is a tube $C_{s_1}\times\overline D_{s_2}\times\cdots\times\overline D_{s_n}$ in $V$ disjoint from $\mathcal V$.
Then in particular, $C_{s_1}\times C_{s_2}\times\cdots\times C_{s_n}$ is disjoint from $\mathcal V$.  Let $V'=D_{s_1}\times\cdots\times D_{s_n}$.

Now for any $(x,z)\in U\times V'$, we have
\begin{multline*}
\frac{F(x,z)}{g(z)}\\
=\frac 1{(2\pi i)^n}\,\int_{C_{s_1}}\cdots\int_{C_{s_n}}\frac{F(x,\zeta_1,\ldots,\zeta_n)/g(\zeta_1,\ldots,\zeta_n)}
{(\zeta_1-z_1)\cdots (\zeta_n-z_n)}\,d\zeta_n\cdots d\zeta_1.
\end{multline*}
Since $|g(\zeta_1,\ldots,\zeta_n)|$ is bounded below by a fixed positive constant on the compact set $C_{s_1}\times\cdots\times C_{s_n}$, we can apply the usual arguments for differentiating inside the integral to conclude that if $\alpha\in (\zz^+)^m$ and $\beta=(\beta_1,\ldots,\beta_n)\in (\zz^+)^n$, then
\begin{multline*}
D_x^\alpha\,(\partial_{z_1}^{\beta_1})\cdots(\partial_{z_n}^{\beta_n})\,\left(\frac{F(x,z)}{g(z)}\right)\\
=\frac{\beta_1!\cdots\beta_n!}{(2\pi i)^n}\,\int_{C_{s_1}}\cdots\int_{C_{s_n}}\frac{D_x^\alpha F(x,\zeta_1,\ldots,\zeta_n)/g(\zeta_1,\ldots,\zeta_n)}
{(\zeta_1-z_1)^{\beta_1+1}\cdots (\zeta_n-z_n)^{\beta_n+1}}\,d\zeta_n\cdots d\zeta_1
\end{multline*}
for all $(x,z)\in U\times V'$.  This of course proves the lemma.
\end{proof}

The following example shows that analyticity is needed in the second argument. Let $m=n=1$ and define the function $g(y)$ on $\rr$ by
$$
g(y)=\begin{cases}
e^{-1/y^2}&\text{ if }y\neq 0\\
0&\text{ if }y=0.
\end{cases}
$$
Then the function on $\rr^2$ given by
$$
F(x,y)=\begin{cases}
xg(y)/(x^2+y^2)&\text{ if }(x,y)\neq (0,0)\\
0&\text{ if }(x,y)=(0,0)
\end{cases}
$$
is smooth, but $F(x,y)/g(y)$ does not extend to a continuous function on $\rr^2$.

\bibliographystyle{alpha}
\bibliography{meanvalue}

\begin{thebibliography}{Hum78}

\bibitem[AS64]{AS}
Milton Abramowitz and Irene~A. Stegun.
\newblock {\em Handbook of mathematical functions with formulas, graphs, and
  mathematical tables}, volume~55 of {\em National Bureau of Standards Applied
  Mathematics Series}.
\newblock For sale by the Superintendent of Documents, U.S. Government Printing
  Office, Washington, D.C., 1964.

\bibitem[Ehr54]{Ehrenpreis1954}
L.~Ehrenpreis.
\newblock Solution of some problems of division. {I}. {D}ivision by a
  polynomial of derivation.
\newblock {\em Amer. J. Math.}, 76:883--903, 1954.

\bibitem[Ehr55]{Ehrenpreis1955}
L.~Ehrenpreis.
\newblock Solution of some problems of division. {II}. {D}ivision by a punctual
  distribution.
\newblock {\em Amer. J. Math.}, 77:286--292, 1955.

\bibitem[Ehr60]{Ehrenpreis1960}
L.~Ehrenpreis.
\newblock Solution of some problems of division. {IV}. {I}nvertible and
  elliptic operators.
\newblock {\em Amer. J. Math.}, 82:522--588, 1960.

\bibitem[Gan71]{Gang}
R.~Gangolli.
\newblock On the {P}lancherel formula and the {P}aley-{W}iener theorem for
  spherical functions on semisimple {L}ie groups.
\newblock {\em Ann. of Math. (2)}, 93:150--165, 1971.

\bibitem[GR15]{Gradshteyn-Ryzhik2015}
I.~S. Gradshteyn and I.~M. Ryzhik.
\newblock {\em Table of integrals, series, and products}.
\newblock Elsevier/Academic Press, Amsterdam, eighth edition, 2015.
\newblock Translated from the Russian, Translation edited and with a preface by
  Daniel Zwillinger and Victor Moll, Revised from the seventh edition
  [MR2360010].

\bibitem[Hel73]{He1}
Sigurdur Helgason.
\newblock The surjectivity of invariant differential operators on symmetric
  spaces. {I}.
\newblock {\em Ann. of Math. (2)}, 98:451--479, 1973.

\bibitem[Hel83]{Helgason1982}
Sigurdur Helgason.
\newblock The range of the {R}adon transform on a symmetric space.
\newblock In {\em Representation theory of reductive groups ({P}ark {C}ity,
  {U}tah, 1982)}, volume~40 of {\em Progr. Math.}, pages 145--151. Birkh\"auser
  Boston, Boston, MA, 1983.

\bibitem[Hel00]{GGA}
Sigurdur Helgason.
\newblock {\em Groups and geometric analysis}, volume~83 of {\em Mathematical
  Surveys and Monographs}.
\newblock American Mathematical Society, Providence, RI, 2000.
\newblock Integral geometry, invariant differential operators, and spherical
  functions, Corrected reprint of the 1984 original.

\bibitem[Hel01]{DS}
Sigurdur Helgason.
\newblock {\em Differential geometry, {L}ie groups, and symmetric spaces},
  volume~34 of {\em Graduate Studies in Mathematics}.
\newblock American Mathematical Society, Providence, RI, 2001.
\newblock Corrected reprint of the 1978 original.

\bibitem[Hel08]{GASS}
Sigurdur Helgason.
\newblock {\em Geometric analysis on symmetric spaces}, volume~39 of {\em
  Mathematical Surveys and Monographs}.
\newblock American Mathematical Society, Providence, RI, second edition, 2008.

\bibitem[Her84]{Hertle1984}
Alexander Hertle.
\newblock On the range of the {R}adon transform and its dual.
\newblock {\em Math. Ann.}, 267(1):91--99, 1984.

\bibitem[H{\"o}r03]{Ho1}
Lars H{\"o}rmander.
\newblock {\em The analysis of linear partial differential operators. {I}}.
\newblock Classics in Mathematics. Springer-Verlag, Berlin, 2003.
\newblock Distribution theory and Fourier analysis, Reprint of the second
  (1990) edition [Springer, Berlin; MR1065993 (91m:35001a)].

\bibitem[H{\"o}r05]{Ho2}
Lars H{\"o}rmander.
\newblock {\em The analysis of linear partial differential operators. {II}}.
\newblock Classics in Mathematics. Springer-Verlag, Berlin, 2005.
\newblock Differential operators with constant coefficients, Reprint of the
  1983 original.

\bibitem[Hum78]{Humphreys1978}
James~E. Humphreys.
\newblock {\em Introduction to {L}ie algebras and representation theory},
  volume~9 of {\em Graduate Texts in Mathematics}.
\newblock Springer-Verlag, New York-Berlin, 1978.
\newblock Second printing, revised.

\bibitem[Koo75]{Ko}
Tom Koornwinder.
\newblock A new proof of a {P}aley-{W}iener type theorem for the {J}acobi
  transform.
\newblock {\em Ark. Mat.}, 13:145--159, 1975.

\bibitem[Lim12]{Lim}
Kyung-Taek Lim.
\newblock {\em The Spherical Mean Value Operators on Euclidean and Hyperbolic
  Spaces}.
\newblock PhD thesis, Tufts University, Medford, MA, 2012.

\bibitem[Mal54]{Malgrange1954}
Bernard Malgrange.
\newblock Equations aux d\'eriv\'ees partielles \`a coefficients constants.
  {II}. {E}quations avec second membre.
\newblock {\em C. R. Acad. Sci. Paris}, 238:196--198, 1954.

\bibitem[Mal56]{Mal}
Bernard Malgrange.
\newblock Existence et approximation des solutions des \'equations aux
  d\'eriv\'ees partielles et des \'equations de convolution.
\newblock {\em Ann. Inst. Fourier, Grenoble}, 6:271--355, 1955--1956.

\bibitem[Rou14]{Rouv}
Fran{\c{c}}ois Rouvi{\`e}re.
\newblock {\em Symmetric spaces and the {K}ashiwara-{V}ergne method}, volume
  2115 of {\em Lecture Notes in Mathematics}.
\newblock Springer, Cham, 2014.

\bibitem[Sch71]{Sch}
Helmut~H. Schaefer.
\newblock {\em Topological vector spaces}.
\newblock Springer-Verlag, New York-Berlin, 1971.
\newblock Third printing corrected, Graduate Texts in Mathematics, Vol. 3.

\end{thebibliography}
%
%
%
%
%
%
%
%
%
%
\end{document}